\numberwithin{equation}{section}
\newif\ifdraft\drafttrue
\long\def\combarak#1{\ifdraft{\marginpar{\sn
#1 \ (BW)}}\else\ignorespaces\fi}
\font\sn = cmssi8 scaled \magstep0
\newcommand\E{\mathbf{e}}
\newcommand\name[1]{\label{#1}{\ifdraft{\sn [#1]}\else\ignorespaces\fi}}
\newcommand\eq[2]{{\ifdraft{\ \tt [#1]}\else\ignorespaces\fi}\begin{equation}\label{eq:#1}{#2}\end{equation}}
\newcommand {\equ}[1]     {\eqref{eq:#1}}
\newcommand\Disc{{\mathrm{disc}}}
\newcommand{\Q}{{\mathbb {Q}}}
\newcommand{\R}{{\mathbb{R}}}
\newcommand{\T}{{\mathbb{T}}}
\newcommand{\Z}{{\mathbb{Z}}}
\newcommand{\C}{{\mathbb{C}}}
\newcommand{\N}{{\mathbb{N}}}
\newcommand{\A}{{\mathbf{a}}}
\newcommand{\B}{{\mathbf{b}}}
\newcommand{\diam}{\operatorname{diam}}
\newcommand {\ignore}[1]  {}
\newcommand{\df}{{\, \stackrel{\mathrm{def}}{=}\, }}
\newcommand{\til}{\widetilde}
\newcommand{\sm}{\smallsetminus}
\newcommand{\vre}{\varepsilon}
\font\comment = cmbx10 scaled \magstep0
\newtheorem{thm}{Theorem}[section]
\newtheorem{lem}[thm]{Lemma}
\newtheorem{prop}[thm]{Proposition}
\newtheorem{cor}[thm]{Corollary}
\newtheorem{remark}[thm]{Remark}
		  \newcommand{\sgn}{\text{sgn}}
		  \newcommand{\mc}{\mathcal}
		  \newcommand{\rar}{\rightarrow}
		  \newcommand{\dsum}{\displaystyle\sum}
		  \newcommand{\bx}{\mathbf{x}}
		  \newcommand{\bt}{\mathbf{t}}
		  \newcommand{\bm}{\mathbf{m}}
		  \newcommand{\by}{\mathbf{y}}
		  \newcommand{\bu}{\mathbf{u}}
	  \newcommand{\spa}{\mathrm{span}}
		  \newcommand{\msG}{\mathscr{G}}
		  \newcommand{\msF}{\mathscr{F}}
		  \newcommand{\lb}{\left\lbrace}
		  \newcommand{\rb}{\right\rbrace}
		  \newcommand{\bxi}{\boldsymbol{\xi}}
\begin{document}
\title[Equivalence relations on separated nets]{Equivalence relations
  on separated nets arising from linear toral flows}
\subjclass[2010]{52C23; 37A25}  
\author{Alan Haynes}
\address{School of Mathematics, University of York, York,   UK }
\email{ alan.haynes@york.ac.uk}

\author{Michael Kelly}
\address{
Dept. of Mathematics, University of Texas, Austin, TX USA
}
\email{ mkelly@math.utexas.edu}

\author{Barak Weiss}
\address{Dept. of Mathematics, Tel Aviv University, Tel Aviv, Israel }
\email{barakw@post.tau.ac.il}

\maketitle
	\begin{abstract}
In 1998, Burago-Kleiner and McMullen independently proved the
existence of separated nets in $\R^d$ which are not bi-Lipschitz
equivalent (BL) to a lattice. A finer equivalence relation than BL
is bounded displacement (BD). Separated nets arise
naturally as return times to a section for minimal $\R^d$-actions. We analyze the separated nets which
arise via these constructions, focusing particularly on nets arising
from linear $\R^d$-actions on tori. We show that generically these
nets are BL to a lattice, and for some choices of dimensions and sections, they are
generically BD to a lattice. We also show the existence of such nets
which are not BD to a lattice.
	\end{abstract}

\section{Introduction}
A {\em separated net} in $\R^d$ is a subset $Y$
for which there are $0<r<R$
such that any two distinct points of $Y$ are at least a distance $r$
apart, and any ball of radius $R$ in $\R^d$ contains a point of
$Y$.  Separated nets are sometimes referred to as {\em Delone
  sets}.
%
 The simplest example of a separated net is a lattice in $\R^d$,
and it is natural to inquire to what extent a given separated net
resembles a lattice. To this end we define equivalence relations
on separated nets: we say that $Y_1, Y_2$ are {\em bi-Lipschitz
  equivalent}, or {\em BL}, if there is a bijection $f: Y_1\to Y_2$ which is
bi-Lipschitz, i.e. for some $C>0$,
$$
\frac1C \|{\bf x}-{\bf y}\|  \leq \|f({\bf x}) - f({\bf y})\| \leq C \|{\bf x}-{\bf y}\|
$$
for all ${\bf x},{\bf  y} \in Y_1$; we say they are {\em bounded displacement}, or
{\em BD}, if there is a bijection $f: Y_1 \to
Y_2$ for which
\eq{eq: defn BD}{
\sup_{{\bf y} \in Y} \|f({\bf y}) - {\bf y}\| < \infty.
}
It is not hard to show that for separated nets, BD implies
BL. Moreover it follows from the Hall marriage lemma (see Proposition
\ref{prop: linear maps do not change}) that
all lattices of the same covolume are in the same BD class, and hence
all lattices are in the same BL class. A fundamental
result in this context was the discovery in 1998 (by
Burago-Kleiner \cite{BK1} and McMullen \cite{McMullen}) that there are
separated nets which are not BL to a lattice. 

A simple way to construct separated nets is via an
$\R^d$-action. Namely, suppose $X$ is a compact space, equipped
with a continuous action of $\R^d$. We denote the action by $\R^d
\times X \ni ({\bf v}, x) \mapsto {\bf v}.x \in X.$
Now given $x \in X$ and a subset $\mathcal{S} \subset X$,
we can define the `visit set'
\eq{eq: visit set}{
Y = Y_{\mathcal{S}, x} \df \{ {\bf v} \in \R^d: {\bf v}.x
\in \mathcal{S} \}.
}
It is easy to impose
conditions on $\mathcal{S}$ guaranteeing
that $Y$ is a separated net for all $x$. For example, this will hold if
$X$ is a $k$-dimensional manifold,
$\mathcal{S}$ is a
 {\em Poincar\'e section} (i.e., an embedded submanifold of dimension $k-d$ everywhere
transverse to orbits) and the
$\R^d$-action is {\em minimal} (i.e. all orbits are dense). \newline
\indent  We will sometimes fix the set $\mathcal{S}$ without reference to a specific $\R^d$-action (in fact, there are times when we will vary the $\R^d$-action).  Consequently, we use the terminology `section' rather loosely, a section $\mathcal{S}$ is simply a subset of $X$.  On the other hand, we are often in the situation where we fix the action and vary the section. When we find ourselves in the latter situation, the sections we consider are essentially Poincar\'e sections. See \S \ref{subsection: sections} for further discussion on sections.

The net $Y$ obviously depends on the dynamical system $X$ chosen. We will focus
on what is perhaps the simplest nontrivial case, namely when $X = \T^k
\df \R^k/\Z^k$ is the standard $k$-torus, and
$\R^d$ acts linearly. That is,
denoting $\pi: \R^k \to \T^k$ the canonical homomorphism, and letting $V \cong \R^d$ be a $d$-dimensional linear
subspace of $\R^k$, the action is given by
\eq{eq: toral linear action}{
{\bf v}.\pi(\bx) = \pi({\bf v}+\bx).
}
In this context we will say
that $Y$ is a {\em toral dynamics separated net}, with {\em associated dimensions}
$(d,k)$.
 We remark that the toral dynamics
separated nets are intimately connected to the well-studied {\em cut-and-project}
constructions of separated nets. We briefly discuss this connection
in \S \ref{subsection: tilings}, and refer the reader to
\cite{Meyer, Senechal, MQ} for more information.

 Note that the separated net $Y$ depends
nontrivially on the choices of the subspace $V$, the section
$\mathcal{S}$, and the orbit $V.\bx$. We will be interested in
{\em typical}
toral dynamical nets; e.g. this might mean randomly choosing the
acting subspace $V$ in the relevant Grassmannian variety\footnote{Throughout this work we often make {\it `almost everywhere'} statements without specific reference to a measure or measure class. When such a situation is encountered, we will understand {\it `a.e.'} with respect to the {\it smooth measure class}.}, and/or
the section $\mathcal{S}$ in a finite dimensional set of shapes such
as parallelotopes, etc. We remark (see \S \ref{subsection: sections}) that different choices of $x$ do
not have a significant effect on the properties of $Y$.

\ignore{
Another simple way to construct a separated net is via tilings of
Euclidean space. Recall that a tiling of $\R^d$ is its subdivision
into countably many regions homeomorphic to closed balls, with
disjoint interiors, such that the regions are isometric to finitely
many possible sets called {\em basic tiles}. If we mark one point on
each basic tile, the set of images of those points, one in each region
of our tiling, forms a separated net, whose BDD class depends only on
the tiling. Two classical constructions of tilings, namely the {\em
  cut and project} and {\em substitution} tilings recalled in \S
\ref{subsection: tilings}, give rise to interesting and well-studied
aperiodic tilings, such as the Penrose tiling. As we will explain, the
separated nets arising from the
cut and project tilings are special cases of toral dynamics separated
nets.
}

The constructions of \cite{BK1, McMullen} were rather indirect,
and left open the question of whether any of the nets constructed via
toral dynamics
is equivalent (in the sense of either BL or BD) to a lattice. In
 \cite{BK2}, Burago
 and Kleiner addressed this issue, and showed that a typical
toral dynamics separated net with associated dimensions (2,3) is
BL to a lattice. We analyze the situations in arbitrary dimensions
$(d,k)$. 

In order to state our main results we will need some definitions.  Given a $\R^d$-action, a section $\mathcal{S}$ is {\it bounded} if it is the image of a bounded subset $B\subset \R^{k-d}$ through a smooth injective map which is everywhere transverse to orbits and extends to the closure of $B$.  We will say that a section $\mathcal{S} \subset \T^k$
is {\em linear} if it is the image under $\pi$ of a bounded subset $B$
of a $(k-d)$-dimensional plane transverse to $V$. A section has nonempty $(k-d)$-dimensional interior if the corresponding set $B$ has nonempty interior. We will use the
notation $\dim_M(\partial \mathcal{S})$ to denote the upper Minkowski dimension of the boundary of $\mathcal{S}$, a notion we
recall in \S \ref{section: prelims}. \newline
\indent  Our first result shows that being BL to a lattice is quite
common for toral dynamics nets:

\begin{thm}\name{thm: main, BL}
For a.e. $d$-dimensional subspace $V \subset \R^k$, for any $\bx \in
\T^k$, and any section $\mathcal{S}$
which is bounded and has nonempty $(k-d)$-dimensional interior,
and satisfies $\dim_M \, \partial \mathcal{S} <
(k-d)$, the corresponding separated net is BL
to a lattice.
\end{thm}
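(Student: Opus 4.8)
The strategy is to reduce the bi-Lipschitz equivalence to a lattice to a statement about the discrepancy of the return-time set $Y$, and then verify that discrepancy estimate for a.e.\ subspace $V$ using the Diophantine properties of generic $V$. Recall the classical criterion (essentially Laczkovich / McMullen / Burago--Kleiner): a separated net $Y \subset \R^d$ is BL to a lattice of covolume $c$ provided that for every cube $Q$ (say axis-parallel, of side length $N$), one has the uniform bound
\eq{eq: discrepancy criterion}{
\absolute{\#(Y \cap Q) - \frac{\mathrm{vol}(Q)}{c}} \leq \epsilon(N)\, N^{d-1}, \qquad \text{where } \epsilon(N) \to 0 \text{ as } N \to \infty,
}
uniformly over the placement of $Q$. (More precisely one wants the "edge-counting" version: the $d$-dimensional discrepancy should be $o$ of the $(d-1)$-dimensional volume of $\partial Q$.) So the whole problem becomes: \emph{show that a typical toral dynamics net satisfies \equ{discrepancy criterion}.}

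For a linear section $\mathcal{S} = \pi(B)$ with $B$ a bounded subset of a plane transverse to $V$, the count $\#(Y \cap Q)$ is, up to boundary effects, the number of lattice translates of a "tube" $Q + \mathbb{R}^{k-d}$-slab that hit $B$ — equivalently a lattice-point count in $\T^k$ for the region swept out, which by the standard dictionary is governed by the \emph{equidistribution with quantitative error} of the linear flow $V.\bx$ in $\T^k$ relative to the set $\mathcal{S}$. Concretely, I would write $\#(Y \cap Q)$ as $\sum_{\vv \in \Z^d \cap Q'} \mathbf{1}_{\mathcal{S}}(\vv . \bx)$ for the appropriate integer grid, express the indicator of $\mathcal{S}$ via its Fourier expansion on the transverse torus, and estimate the resulting exponential sums. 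The main-term contribution gives $\mathrm{vol}(Q) \cdot \mathrm{vol}_{k-d}(B) / (\text{covolume factor})$, which identifies $c$; the error term is a sum of exponential sums $\sum_{\vv} e(\vv \cdot \lambda)$ over frequencies $\lambda$ coming from $V^\perp$-directions, weighted by Fourier coefficients $\widehat{\mathbf{1}_B}$. Because $\mathcal{S}$ is assumed bounded with $\dim_M \partial \mathcal{S} < k-d$, the Fourier coefficients of $\mathbf{1}_B$ decay fast enough (this is exactly the role of the Minkowski-dimension hypothesis: it controls $\sum_{\|\lambda\| \le T} |\widehat{\mathbf{1}_B}(\lambda)|$, cf.\ the discussion of $\dim_M$ in \S\ref{section: prelims}) that the sum over $\lambda$ converges after the exponential-sum cancellation is exploited.

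The exponential-sum cancellation is the heart of the matter: $\absolute{\sum_{\vv \in \Z^d \cap Q'} e(\vv \cdot \ell(\lambda))}$ is bounded by $\prod_i \min(N, \|\ell(\lambda)_i\|^{-1})$ where $\ell(\lambda)$ records the frequencies restricted to $V$. To make the total error $o(N^{d-1})$ uniformly in the position of $Q$, I need a lower bound on how well the numbers $\ell(\lambda)$ can be simultaneously approximated by rationals — i.e.\ a Diophantine (not-very-well-approximable) condition on $V$. The key point is that this condition holds for a.e.\ $V$ in the Grassmannian: the set of $V$ for which some nonzero frequency $\lambda$ with $\|\lambda\| \le T$ has $\|\ell(\lambda)\|$ abnormally small has measure decaying in $T$, and Borel--Cantelli gives a full-measure set of $V$ with a usable (say, logarithmic-loss) lower bound. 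Combining this with the Fourier-decay bound from $\dim_M \partial \mathcal{S} < k-d$ and summing dyadically over $\|\lambda\|$ yields \equ{discrepancy criterion}.

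\textbf{Main obstacle.} The delicate part is not the a.e.\ Diophantine genericity (standard Borel--Cantelli) nor the Fourier decay (standard Minkowski-dimension estimate) in isolation, but \emph{making the two interact so that the discrepancy bound is genuinely $o(N^{d-1})$ with a constant independent of the placement of the cube $Q$} — the uniformity over translates of $Q$ is essential for the Laczkovich-type criterion, and one must be careful that the Diophantine lower bound applies to \emph{all} frequencies entering the Fourier expansion, including the interaction with the non-smooth boundary of $\mathcal{S}$. In particular one has to handle the boundary layer of $\mathcal{S}$ separately (smoothing the indicator of $B$ at scale $\delta$, bounding the smoothing error by $\mathrm{vol}_{k-d}(\partial B)_\delta \ll \delta^{\,k-d-\dim_M \partial \mathcal{S}}$, and optimizing $\delta$ against the exponential-sum gain), and check that this smoothing error, after multiplication by the ambient volume $N^d$, is still $o(N^{d-1})$. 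Getting these scales to balance is where the hypothesis $\dim_M \partial\mathcal{S} < k-d$ is used in an essential, quantitative way.
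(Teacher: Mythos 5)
Your overall strategy — reduce bi-Lipschitz equivalence to a discrepancy estimate, express the error as exponential sums, and combine Diophantine genericity of $V$ with the Minkowski-dimension hypothesis on $\partial\mathcal{S}$ — is in the right spirit and matches the paper's plan. However, the reduction you state at the outset is the wrong one, and this gap would derail the argument. You assert that $Y$ is BL to a lattice provided $\absolute{\#(Y\cap Q)-|Q|/c}\leq \epsilon(N)N^{d-1}$ for every cube $Q$ of side $N$, with $\epsilon(N)\to 0$ ("$o$ of the $(d-1)$-volume of $\partial Q$"). This is essentially the Laczkovich criterion for \emph{bounded displacement} (BD), not for BL. The criterion that is actually sufficient for BL — and what the paper uses via Burago--Kleiner — is much weaker: it suffices that the normalized discrepancy $D_Y(\rho,\lambda)=\sup_B \Disc_Y(B,\lambda)/(\lambda|B|)$ be summable over dyadic scales, which already holds as soon as $\Disc_Y(B_T)=O(T^{d-\delta})$ for \emph{some} $\delta>0$. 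The distinction is not cosmetic. The exponential-sum estimate that a generic Diophantine $V$ actually delivers (Theorem \ref{thm: equidistribution for BL}) is $\Disc_Y(B_T)\leq CT^{d-\delta}$ with $\delta$ typically much smaller than $1$ — any $\delta<(d+1)/(d+s+1)$ with $s>k-1$ the Diophantine exponent — which comfortably satisfies Burago--Kleiner but is nowhere near the $o(T^{d-1})$ you ask for. If you insist on your stated criterion the proof does not close; and it should not, since $o(T^{d-1})$-type discrepancy is essentially the BD regime, and Theorem \ref{thm: main, BDD} shows that BD requires extra hypotheses ($d>(k+1)/2$, or a box section) and fails for a residual set of $V$ in part (3). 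So the bound you want is sometimes false in the generality of Theorem \ref{thm: main, BL}.

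Two secondary issues. First, the identity $\#(Y\cap Q)=\sum_{\vv\in\Z^d\cap Q'}\mathbf{1}_{\mathcal{S}}(\vv.\bx)$ is not correct: $Y$ is a separated net in $\R^d$, not a subset of a coordinate lattice. The paper converts the discrete count to a Birkhoff integral by thickening the section, setting $W=\{\mathbf{u}.s:\mathbf{u}\in\mathcal{U}_1,\ s\in\mathcal{S}\}\subset\T^k$ and sandwiching $\#(Y\cap Q)$ between $\tfrac{1}{|\mathcal{U}_1|}N_{T\mp r}(W,x)$ (Corollary \ref{cor: dynamical for BL}); some device of this kind is needed before harmonic analysis can be applied. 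Second, convolution-smoothing of $\chi_B$ at scale $\delta$ gives $L^1$ approximants but not one-sided majorants and minorants, which you genuinely need for a counting argument. The paper handles this by approximating $W$ from inside and outside by unions of aligned boxes at scale $1/K$ (here the Minkowski-dimension hypothesis controls the symmetric-difference volume, Corollary \ref{cor: for main, BL}) and then applying the Selberg majorant/minorant construction (Theorem \ref{thm:extremal}) to each box, giving trigonometric polynomials that genuinely dominate and are dominated by the indicator, with Fourier coefficients of compact support and controlled size.
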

It would be interesting to know whether there is a toral dynamics separated net
which is {\em not} BL to a lattice.

Our second result deals with the equivalence relation BD. Here the situation is more
delicate, and we have the following:

\begin{thm}\name{thm:  main, BDD}
Consider toral dynamics nets with associated dimensions $(d,k)$.
\begin{enumerate}
\item
If $(k+1)/2<d < k$, then for
a.e. $V$, any $\bx \in \T^k$, and
linear section $\mathcal{S}$
which is $(k-d)$-dimensionally open,
and satisfies $\dim_M \, \partial \mathcal{S} =
k-d-1$, the corresponding
separated net is
BD to a lattice.
\item
For any $2 \leq d < k$,  for a.e.
$V$, for any $x \in \T^k$ and any linear section
$\mathcal{S}$ which is a box with sides parallel to $k-d$ of the
coordinate axes, the corresponding net is BD to a lattice.
\item
For a.e. linear section $\mathcal{S} \subset B$ which is a
parallelotope, there is a residual set of subspaces $V$ for
which the corresponding net is not BD to a lattice.
\end{enumerate}
\end{thm}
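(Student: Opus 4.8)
I will outline a plan for part~(3). Parts (1) and (2) assert that BD \emph{holds}, and are proved in the opposite direction --- by verifying a uniform bounded-discrepancy estimate for the relevant $V$ and invoking a transference (Hall-marriage) criterion --- so I concentrate on part~(3), which is where the Diophantine and Baire-category input lies.

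\emph{Reduction to discrepancy, and the category skeleton.} Fix an orthonormal identification $V\cong\R^d$ and let $\delta>0$ be the density of $Y=Y_{\mathcal S,V,\bx}$. The starting point is the elementary fact that if $Y$ is BD to a lattice (necessarily of covolume $\delta^{-1}$) with displacement bounded by $\rho$, then
\[
\bigl|\#(Y\cap A)-\delta\,\mathrm{vol}(A)\bigr|\le C(\rho)\,\mathrm{Area}(\partial A)\qquad\text{for every box }A\subset V,
\]
because such a bijection carries at most $\lesssim\rho\,\mathrm{Area}(\partial A)$ points across $\partial A$, while a lattice has $O(\mathrm{Area}(\partial A))$ discrepancy in boxes. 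Writing $D_V(A)$ for the left-hand side, it suffices to find, for a residual set of $V$ and every $j\in\N$, a box $A$ with $|D_V(A)|>j\,\mathrm{Area}(\partial A)$. From the definition of the visit set and the linear toral action --- equivalently, from the cut-and-project picture of \S\ref{subsection: tilings} --- one has $\#(Y\cap A)=\#\bigl(\Z^k\cap(\bx+A\times B)\bigr)$ with $A\times B\subset V\oplus W$ and $W$ the transversal $(k-d)$-plane, and Poisson summation (after a harmless fixed-scale smoothing of $\mathbf 1_A$, which alters $D_V(A)$ by $O(\mathrm{Area}(\partial A))$ since $Y$ is separated) gives
\[
D_V(A)=\sum_{\xi\in\Z^k\smallsetminus\{0\}}\widehat{\mathbf 1_A}(\xi|_V)\,\widehat{\mathbf 1_B}(\xi|_W)\,e^{-2\pi i\langle\xi,\bx\rangle}\;+\;O(\mathrm{Area}(\partial A)),
\]
with $\xi|_V\in V^*$ and $\xi|_W\in W^*$ the restricted functionals. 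Set $\mathcal O_j=\{V:\exists\text{ a box }A\subset V\text{ with }|D_V(A)|>j\,\mathrm{Area}(\partial A)\}$. Choosing $A$ so that $\partial(A\times B)$ is disjoint from $\Z^k$ makes the lattice count locally constant and $\mathrm{vol}(A)$ continuous as $V$ varies, so $\mathcal O_j$ is open; hence $\{V:Y\text{ not BD to a lattice}\}\supseteq\bigcap_j\mathcal O_j$ is a dense $G_\delta$ once each $\mathcal O_j$ is shown dense.

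\emph{Density via resonant boxes.} Given $V_0$ and $\varepsilon>0$, pick $\xi\in\Z^k$ of large norm whose direction lies within $\varepsilon/2$ of $V_0^\perp$ (rational directions are dense, and $|\xi|$ may be taken arbitrarily large), then rotate $V_0$ by less than $\varepsilon$ to a subspace $V$ for which $\delta_\xi:=\mathrm{dist}(\xi,V^\perp)=|\xi|_V|$ is positive but smaller than any prescribed function of $|\xi|$, say $\delta_\xi<2^{-|\xi|}$, and such that moreover every $\xi'$ with $0<|\xi'|\le|\xi|^{100}$ has $\mathrm{dist}(\xi',V^\perp)\gtrsim|\xi'|^{-1}$ (a generic choice of rotation). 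In a frame of $V$ in which $\xi|_V=(\delta_\xi,0,\dots,0)$, let $A$ be the box of side lengths $(\ell_1,L,\dots,L)$ with $\ell_1\approx(2\delta_\xi)^{-1}$ and $|\xi|^{k-d}\ll L\ll\delta_\xi^{-1}$, translated so that the phases of the $\pm\xi$ terms align. Then $\widehat{\mathbf 1_A}(\xi|_V)\asymp L^{d-1}\delta_\xi^{-1}$ and, for generic $B$, $|\widehat{\mathbf 1_B}(\xi|_W)|\gtrsim|\xi|^{-(k-d)}$, so the $\pm\xi$ part of $D_V(A)$ has size $\asymp L^{d-1}\delta_\xi^{-1}|\widehat{\mathbf 1_B}(\xi|_W)|$, whereas $\mathrm{Area}(\partial A)\asymp L^{d-2}\delta_\xi^{-1}$; their ratio is $\asymp L\,|\widehat{\mathbf 1_B}(\xi|_W)|$, which exceeds $j$ as soon as $L>j|\xi|^{k-d}$ --- compatible with $L\ll\delta_\xi^{-1}$ precisely because $\delta_\xi$ is so small.

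\emph{The remainder, and the role of ``a.e.\ section''.} The main obstacle is to show that the $\pm\xi$ terms really dominate $D_V(A)$. After the smoothing the sum is essentially carried by $\xi'$ with $|\xi'|_V|=O(1)$, i.e.\ by integer points in a fixed slab about $V^\perp$; for these $|\widehat{\mathbf 1_B}(\xi'|_W)|\lesssim|\xi'|^{-(k-d)}$, and $|\widehat{\mathbf 1_A}(\xi'|_V)|\lesssim L^{d-1}$ unless the first frame-coordinate of $\xi'|_V$ is as small as $\delta_\xi$, which in the above range (by the genericity of the rotation) happens only for $\xi'=\pm\xi$ and, at a geometrically suppressed level, for odd multiples $\pm m\xi$. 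A dyadic summation over the slab then bounds the rest by $O(L^{d-1})=O\bigl(\delta_\xi\cdot(\text{main term})\bigr)$, and the multiples $\pm m\xi$, $m\ge3$, contribute at most half the main term, using the decay of $\widehat{\mathbf 1_B}$ along the line $\R\,\xi|_W$ together with $\xi|_W$ not being orthogonal to any facet normal of $B$. This last non-degeneracy, and the lower bound $|\widehat{\mathbf 1_B}(\xi|_W)|\gtrsim|\xi|^{-(k-d)}$ for a usable resonance, are exactly what the hypothesis ``for a.e.\ parallelotope $B$'' buys: for $B=P[0,1]^{k-d}+b_0$ the transform is an explicit product of $\mathrm{sinc}$-factors, $\{B:|\widehat{\mathbf 1_B}(\eta)|<\tau\}$ has measure $O(\tau^\kappa)$ for a fixed $\kappa>0$ by a Remez-type inequality, and summing these exceptional sets over the countable family of triples $(V_0,\varepsilon,j)$ (with $V_0$ ranging over a countable dense subset of the Grassmannian) and over the candidate resonances $\xi$ leaves a full-measure set of admissible $B$. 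This also sheds light on part~(2): for coordinate-parallel windows the relevant frequencies carry extra arithmetic structure that forces the box-discrepancy to stay $O(\mathrm{Area}(\partial A))$, whereas a generic parallelotope has no such structure.
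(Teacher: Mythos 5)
The proposal takes a genuinely different route for part~(3). The paper's proof (Propositions~\ref{prop: bad rational dense} and \ref{prop: Baire}) stays entirely in the dynamical picture: it fixes a rational $d$-plane $Q$ near $V$ whose compact orbits $Q.\bx$ meet $\mathcal S$ in finite sets, uses the ``not correlated'' condition \equ{eq: defn not correlated} (established for a.e.\ parallelotope in Proposition~\ref{prop: bad rational dense} by a soft measure-zero-submanifold argument, with no Fourier analysis) to find two base points with different visit counts, and then tiles an integer dilate $M_i$ of a fundamental domain for $Q\cap\Z^k$. The discrepancy over $M_i$ grows linearly in the number $N_i$ of tiles while $|(\partial M_i)^{(1)}|\lesssim N_i^{1-1/d}$, so condition~(3) of Theorem~\ref{thm: Laczkovich2} fails on a residual set of $V$. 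You instead pass to the cut-and-project lattice count, Poisson-sum, and construct a long thin box resonating with a single integer frequency $\xi$ nearly normal to $V$. The constructions are dual --- your $\xi$ plays the role of a primitive vector in $(Q\cap\Z^k)^\perp$ --- and both make the discrepancy dominate the boundary term by anisotropic scaling, so the architecture (open dense $\mathcal O_j$'s, residual bad set) is parallel. What the paper's route buys is that ``not correlated'' is a single qualitative condition about one compact orbit and a single point pair, verified once; no tail estimates are needed.

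The gap in your sketch is the control of the Poisson remainder. You assert that for $\xi'$ in the slab $\{|\xi'|_V|=O(1)\}$, $\xi'\neq\pm m\xi$, one has $|\widehat{\mathbf 1_B}(\xi'|_W)|\lesssim|\xi'|^{-(k-d)}$, but for a fixed parallelotope $B$ this decay rate holds only when $\xi'|_W$ sits generically relative to the facet normals of $B$; near a facet normal the decay degrades to $|\xi'|^{-1}$. Making the dyadic slab sum $O(L^{d-1})$ therefore requires a quantitative Diophantine hypothesis coupling the transversal window $B$ and the near-normal direction $\xi$, of the form $\sum_{0<|\xi'|\le R} r(\xi'|_W)\ll (\log R)^{O(1)}$ --- this is precisely the kind of estimate that Proposition~\ref{prop: required bound} establishes for the flow direction, and it is not for free on the window side, nor does the cited ``Remez-type inequality'' help: that only addresses the \emph{lower} bound on the single main Fourier coefficient $\widehat{\mathbf 1_B}(\xi|_W)$, not the full remainder. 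Moreover the exceptional set of $B$ must be summed over two infinite families (resonances $\xi$ and, for each, the slab frequencies $\xi'$), and showing this union has measure zero needs a convergent series of measure bounds that the sketch does not supply. So the plan is plausible but would need essentially a full analogue of \S\ref{section: reduction} for the window, which the paper's argument deliberately avoids.
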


Our strategy of proof is inspired by \cite{BK2, Solomon, DSS}. We
use work of Burago-Kleiner \cite{BK2} and Laczkovich \cite{Laczkovich}
to relate the notions of BL and BD to rates of convergence of some
ergodic averages for our toral $\R^d$-action. This rate of convergence
is studied via harmonic analysis on $\T^k$, and leads to the study of
Diophantine properties of the acting subspace
$V$.
 The connection between Diophantine properties of $V$ and rates of
convergence of ergodic averages on $\T^k$ is standard and well-studied
in the literature on
discrepancy, see e.g. \cite{DT}. However none of the existing results
in the literature supplied the estimates we needed.
Before
stating our results in this direction, we introduce some notation.

We will use boldface letters such as $\mathbf{v,x}$ to denote vectors
in $\R^k$, and denote their inner product by $\mathbf{v} \cdot
\mathbf{x}$. 
Let $V = \spa \left(\mathbf{v}_1, \ldots, \mathbf{v}_d\right).$
For $T > 0$ we set
\eq{eq: defn BT}{
B_T \df \left\{\sum a_i \mathbf{v}_i : \max_i |a_i| \leq T\right\}.
}
The notation $|A|$ denotes the Lebesgue measure of a measurable set
$A$ in $\R^k$ or $\T^k$.
Given $U \subset \T^k$, $T\ge 0$ and ${\bf x}\in\R^k$ we set
\[N_T(U,{\bf x}) \df \int_{B_T}\chi_U\left(\pi({\bf x}+\bf{t})\right)~d{\bf t}.\]
The reader should note that this notation suppresses the dependence of
$N_T(U, \mathbf{x})$ on the choice of the subspace $V$ as well as the
basis $\mathbf{v}_1, \ldots, \mathbf{v}_d$.
We will denote by $\|\mathbf{m}\|$ the sup-norm of a vector
  $\mathbf{m} \in \R^k$, and
say that $\mathbf{v}$ is {\em Diophantine} if there are positive
constants
$c, s$ such that
\eq{eq: defn Diophantine vector}{
|{\bf m}\cdot {\bf v}|\ge \frac{c}{\|{\bf
  m}\|^{s}}~\text{, \ for all nonzero}~{\bf m}\in\Z^k.
}
We will say that $V$ is {\em Diophantine} if it contains a Diophantine
vector.

By an {\em aligned box} in $\T^k$ we mean the image, under $\pi$, of a set of
the form $[a_1, b_1] \times \cdots  \times [a_k, b_k]$ (a box with
sides parallel to the coordinate axes), where $b_i-a_i <1 $ for all
$i$ (so that $\pi$ is injective on the box).

\begin{thm}\name{thm: equidistribution for BL}
Suppose $V$ is Diophantine. Then
there are constants $C$ and $\delta>0$ such that for any
${\bf x}\in\R^k$, any $T>1$, and any aligned box $U \subset \T^k$,
\eq{eq: equidistribution}{
\Big|N_T(U,{\bf x})-|U||B_T|\Big|\leq C T^{d-\delta}.
}

\end{thm}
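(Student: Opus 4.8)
The plan is to estimate $N_T(U,\mathbf{x})$ via Fourier analysis on $\T^k$, writing $\chi_U$ as a Fourier series and separating the zeroth coefficient (which gives the main term $|U||B_T|$) from the rest. First I would fix an aligned box $U = \pi\big(\prod_{j=1}^k [a_j,b_j]\big)$ and expand $\chi_U(\mathbf{y}) = \sum_{\mathbf{m}\in\Z^k} \widehat{\chi_U}(\mathbf{m}) \, \E(\mathbf{m}\cdot\mathbf{y})$, where $\widehat{\chi_U}(\mathbf{0}) = |U|$ and, because $U$ is a product of intervals, $|\widehat{\chi_U}(\mathbf{m})| \leq \prod_{j} \min\big(b_j - a_j, \tfrac{1}{\pi|m_j|}\big)$ for $\mathbf{m}\neq\mathbf{0}$. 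Substituting into the integral over $B_T$ and interchanging sum and integral (justified after a standard smoothing/truncation, discussed below), the $\mathbf{m}=\mathbf{0}$ term contributes exactly $|U||B_T|$, and the error is $\sum_{\mathbf{m}\neq\mathbf{0}} \widehat{\chi_U}(\mathbf{m}) \, \E(\mathbf{m}\cdot\mathbf{x}) \int_{B_T} \E(\mathbf{m}\cdot\mathbf{t})\, d\mathbf{t}$.

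The next step is to bound the oscillatory integral $I_T(\mathbf{m}) \df \int_{B_T} \E(\mathbf{m}\cdot\mathbf{t})\, d\mathbf{t}$. Parametrizing $B_T$ by $\mathbf{t} = \sum a_i \mathbf{v}_i$ with $\max_i|a_i| \leq T$, this factors as a product of one-dimensional integrals $\prod_{i=1}^d \int_{-T}^{T} \E(a_i\, \mathbf{m}\cdot\mathbf{v}_i)\, da_i$, each of which is at most $\min\big(2T, \tfrac{1}{\pi |\mathbf{m}\cdot\mathbf{v}_i|}\big)$; here I take $\mathbf{v}_1$ to be the Diophantine vector supplied by the hypothesis on $V$ (we may assume it is part of our chosen basis). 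Thus $|I_T(\mathbf{m})| \leq (2T)^{d-1} \cdot \min\big(2T, \tfrac{1}{\pi|\mathbf{m}\cdot\mathbf{v}_1|}\big) \leq (2T)^{d-1} \cdot \min\big(2T, \tfrac{\|\mathbf{m}\|^s}{\pi c}\big)$ using \equ{defn Diophantine vector}. The crucial gain of a power of $T$ comes from this last factor: for any $\theta \in (0,1)$ we have $\min(2T, \tfrac{\|\mathbf{m}\|^s}{\pi c}) \leq C' T^{1-\theta}\|\mathbf{m}\|^{s\theta}$ (bounding the min by the geometric-type interpolation $a^{1-\theta}b^\theta$).

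Finally I would combine the two bounds and sum. We get
$$\Big|N_T(U,\mathbf{x}) - |U||B_T|\Big| \leq C'' T^{d-\theta} \sum_{\mathbf{m}\neq\mathbf{0}} \|\mathbf{m}\|^{s\theta} \prod_{j=1}^k \min\Big(1, \tfrac{1}{|m_j|}\Big),$$
and the remaining task is to choose $\theta$ small enough that this series converges. The product $\prod_j \min(1, 1/|m_j|)$ decays like $1/\prod_{m_j\neq 0}|m_j|$, so after grouping by which coordinates are nonzero and using $\|\mathbf{m}\| \leq \sum|m_j|$, convergence holds provided $s\theta < 1$ (so that a single $\sum_{m} |m|^{s\theta - 1 - \vre}$-type tail is summable for the worst coordinate while the others supply logarithmic/constant factors) — more carefully one needs $s\theta$ strictly less than the number $1$ minus a cushion to absorb the growth; taking $\theta = \tfrac{1}{2s}$ suffices after replacing $\min(1,1/|m_j|)$ by a slightly stronger $\min(1, |m_j|^{-1-\eta})$ obtained by spending a bit of the $\min$ factor, or alternatively by noting $\chi_U$ can be approximated in $L^1$ by smooth functions whose Fourier coefficients decay rapidly, pushing the error from the truncation into a lower-order term. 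Setting $\delta \df \theta$ gives the claimed bound $CT^{d-\delta}$, uniformly in $\mathbf{x}$ and in the box $U$.

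The main obstacle is the convergence of the final Fourier series: the naive coefficient bound for a box, $\prod_j \min(b_j-a_j, \tfrac{1}{\pi|m_j|})$, only gives conditional/borderline convergence once the factor $\|\mathbf{m}\|^{s\theta}$ is inserted, so one must either (i) justify the interchange of sum and integral by first mollifying $\chi_U$ (convolving with a smooth bump of width $T^{-1}$, controlling the $L^1$-error by $O(T^{d-1})$ which is absorbed into $CT^{d-\delta}$), and then exploit the rapid decay of the mollified coefficients, or (ii) run a more delicate dyadic decomposition of $\Z^k\setminus\{\mathbf{0}\}$ keeping track of which coordinates are large. Either way the Diophantine condition enters only through $\mathbf{v}_1$, which is exactly why we only need $V$ to *contain* a Diophantine vector rather than to be Diophantine in a stronger simultaneous sense.
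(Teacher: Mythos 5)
Your overall plan shares the skeleton of the paper's argument (Fourier expansion on $\T^k$, product structure of the oscillatory integral $\int_{B_T} e(\bm\cdot\mathbf{t})\,d\mathbf{t}$, and converting the Diophantine hypothesis into a lower bound on $|\bm\cdot\mathbf{v}_i|$), but there is a genuine gap at the step where you try to sum the Fourier series of $\chi_U$. The series you write down,
\[
T^{d-\theta}\sum_{\bm\neq\mathbf{0}}\|\bm\|^{s\theta}\prod_{j=1}^k\min\!\Big(1,\tfrac{1}{|m_j|}\Big),
\]
diverges for every $\theta\ge 0$ -- already the subsum over $\bm=(m,0,\dots,0)$ is a harmonic series -- so your assertion that ``convergence holds provided $s\theta<1$'' is incorrect, and choosing $\theta$ smaller cannot save it. You are aware something is wrong here, but neither of your proposed repairs closes the gap: replacing $\min(1,1/|m_j|)$ by $\min(1,|m_j|^{-1-\eta})$ is not legitimate because the Fourier coefficients of a box genuinely decay only like $1/|m_j|$; and the mollification idea as you describe it is invalid for the present problem because an $L^1(\T^k)$ bound on $\chi_U-\phi$ does not control $\int_{B_T}(\chi_U-\phi)(\pi(\mathbf{x}+\mathbf{t}))\,d\mathbf{t}$. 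That integral is over a $d$-dimensional orbit piece, which has measure zero in $\T^k$, so the smallness of $\|\chi_U-\phi\|_{L^1(\T^k)}$ gives no pointwise or orbital information; the claimed bound of the mollification error by $O(T^{d-1})$ is precisely the kind of equidistribution statement you are trying to prove and so is circular.

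What the paper does instead is replace $\chi_U$ by \emph{one-sided} trigonometric polynomials $\varphi_U\le\chi_U^{\T}\le\psi_U$ of degree $\le M$ with $\widehat\psi_U(\mathbf{0})-|U|$ and $|U|-\widehat\varphi_U(\mathbf{0})$ both $O(1/M)$ (the Selberg construction, Theorem \ref{thm:extremal}). Because the approximation is pointwise and one-sided, the Birkhoff integrals of $\varphi_U$ and $\psi_U$ genuinely sandwich $N_T(U,\mathbf{x})$; because they are trigonometric polynomials, the sum is \emph{finite}, so there is no convergence issue; and the error from the zeroth coefficient contributes $|B_T|/M$. That yields the Erd\H{o}s--Tur\'an-type inequality of Corollary \ref{lem: erdos turan}, after which your interpolation of the $\min$ and the Diophantine lower bound do go through once one chooses $M=\lfloor T^{\delta}\rfloor$ appropriately. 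So the missing idea in your write-up is that the truncation must come from a pointwise majorant/minorant (or at the very least from a one-sided mollification, e.g.\ convolving the indicator of a box shrunk/expanded by $1/M$), not from an $L^1$-approximation.

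A smaller point: you assume the Diophantine vector $\mathbf{v}$ can be taken to be $\mathbf{v}_1$, a basis vector. Since $B_T$ is defined using a fixed basis of $V$, this is a slight loss of generality. The paper handles this by writing $\mathbf{v}=\sum x_i\mathbf{v}_i$ and deducing $\max_i|\bm\cdot\mathbf{v}_i|\ge c_1\|\bm\|^{-s'}$ for any $s'>s$, which works for every choice of basis; it would be cleaner for you to do the same rather than appeal to a basis change, which would alter $B_T$.
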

We remark that under a stronger Diophantine assumption, which still holds for
almost every subspace $V$, conclusion
\equ{eq: equidistribution} can be strengthened, replacing $T^{d-\delta}$ with $(\log
T)^{k+2d+\delta}$. See Proposition \ref{prop: strengthening}.








Given a basis $\mathcal{T} =( \mathbf{t}_1, \ldots, \mathbf{t}_k)$ of
$\R^k$, we denote
\eq{eq: defn r general}{
r_{\mathcal{T}}(\mathbf{m}) \df \prod_{i=1}^k \min \left(1, \frac{1}{|\mathbf{t}_i
    \cdot \mathbf{m}|} \right),
}
and say
that $\mathbf{v}_1, \ldots, \mathbf{v}_d$ are {\em strongly
  Diophantine (with respect to $\mathcal{T} $)} if for
any $\vre>0$ there is $C>0$ such that for any $M>0$,
\eq{eq: for strongly Diophantine}{
\sum_{\substack{\mathbf{m} \in \Z^k \sm \{0\} \\ \| \mathbf{m}
    \| \leq M}} r_{\mathcal{T}}(\mathbf{m}) \prod_{i=1}^d \frac{1}{|\mathbf{m} \cdot \mathbf{v}_i|}
\leq C M^{\vre}.}
We say that $U \subset \T^k$ is {\em a
  parallelotope aligned
  with $\mathcal{T}$} if there are 
positive $b_1, \ldots, b_k$, and $\mathbf{x} \in \R^k$, such that $U =
\pi(\til U+\mathbf{x})$, where 
$$\til U \df \left\{\sum_{i=1}^d a_i \mathbf{t}_i :\forall i, \, a_i
  \in [0, b_i]\right\},
$$
and $\pi$ is injective on $\til U + \mathbf{x}$. 
Let $\mathbf{e}_1, \ldots, \mathbf{e}_k$ be the standard basis for
$\R^k$.

\begin{thm}\name{thm: equidistribution for BDD}
Suppose $\mathcal{T} = (\mathbf{v}_1, \ldots, \mathbf{v}_k)$ is a basis for $\R^k$
such
that $\mathbf{v}_i  \in \{\mathbf{e}_1, \ldots, \mathbf{e}_k\}$ for
each $i=d+1, \ldots, k$, and $\mathbf{v}_1, \ldots, \mathbf{v}_d$
is strongly Diophantine with respect to $\mathcal{T}$. 
Then for any
$\delta>0$  there is $C>0$ such that for all 
$\bx
\in \T^k$, and any $U$ which is a parallelotope aligned with
$\mathcal{T}$, with sidelengths bounded above by  $\eta$,
we have
\eq{eq: equidistribution2}{
\Big|N_T(U, {\bf x})-|U||B_T|\Big|\leq C\left( 1+\eta\right)^{k} T^\delta.}
\end{thm}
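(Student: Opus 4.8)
The plan is to prove Theorem~\ref{thm: equidistribution for BDD} by Fourier analysis on $\T^k$, expanding the indicator of the parallelotope $U$ into its Fourier series and using the strong Diophantine hypothesis on $\mathbf{v}_1,\dots,\mathbf{v}_d$ (relative to $\mathcal{T}$) to bound the resulting exponential sums. The key observation is that $N_T(U,\mathbf{x})$ is exactly the integral of $\chi_U\circ\pi$ along the box $B_T$, so if we write $\chi_U(\pi(\mathbf{y})) = \sum_{\mathbf{m}\in\Z^k}\widehat{\chi_U}(\mathbf{m})\,e(\mathbf{m}\cdot\mathbf{y})$ (with $e(z)=e^{2\pi i z}$ and $\widehat{\chi_U}(\mathbf 0)=|U|$), then
\[
N_T(U,\mathbf{x}) - |U|\,|B_T| = \sum_{\mathbf{m}\in\Z^k\sm\{0\}}\widehat{\chi_U}(\mathbf{m})\,e(\mathbf{m}\cdot\mathbf{x})\int_{B_T}e(\mathbf{m}\cdot\mathbf{t})\,d\mathbf{t}.
\]
Since $U=\pi(\widetilde U+\mathbf{x})$ with $\widetilde U = \{\sum a_i\mathbf{t}_i : a_i\in[0,b_i]\}$ and, by hypothesis, $\mathbf{t}_{d+1},\dots,\mathbf{t}_k$ are standard basis vectors, the change of variables sending $\widetilde U$ to a coordinate box shows $|\widehat{\chi_U}(\mathbf{m})|$ factors (up to the Jacobian, which is bounded since $\mathbf{v}_1,\dots,\mathbf{v}_d$ are fixed) as a product $\prod_{i=1}^k \min(b_i, 1/|\mathbf{t}_i\cdot\mathbf{m}|)$, which is $\lesssim (1+\eta)^k\, r_{\mathcal{T}}(\mathbf{m})$ once we use $b_i\le\eta$ and split into the cases $b_i\le 1$ and $b_i>1$. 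This is where the $(1+\eta)^k$ factor in \equ{eq: equidistribution2} comes from.

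The second main ingredient is the estimate for the oscillatory integral over $B_T$. Writing $\mathbf{t}=\sum_{j=1}^d a_j\mathbf{v}_j$ with $|a_j|\le T$, the integral factors as $\prod_{j=1}^d \int_{-T}^{T} e(a_j\,\mathbf{m}\cdot\mathbf{v}_j)\,da_j$, and each factor is bounded by $\min(2T, 1/(\pi|\mathbf{m}\cdot\mathbf{v}_j|))$. Hence
\[
\left|\int_{B_T}e(\mathbf{m}\cdot\mathbf{t})\,d\mathbf{t}\right| \le \prod_{j=1}^d \min\!\left(2T, \frac{1}{\pi|\mathbf{m}\cdot\mathbf{v}_j|}\right).
\]
Combining the two bounds and taking absolute values (so the factor $e(\mathbf{m}\cdot\mathbf{x})$ disappears), we get
\[
\left|N_T(U,\mathbf{x}) - |U||B_T|\right| \ll (1+\eta)^k \sum_{\mathbf{m}\in\Z^k\sm\{0\}} r_{\mathcal{T}}(\mathbf{m})\prod_{j=1}^d \min\!\left(2T, \frac{1}{|\mathbf{m}\cdot\mathbf{v}_j|}\right).
\]

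**The main obstacle** will be handling the sum on the right: it runs over \emph{all} of $\Z^k\sm\{0\}$, not just $\|\mathbf{m}\|\le M$ as in \equ{eq: for strongly Diophantine}, so I must control the tail. The strategy is a dyadic decomposition: split off the frequencies with $\|\mathbf{m}\|\le T$ (say), on which I replace each $\min(2T,1/|\mathbf{m}\cdot\mathbf{v}_j|)$ crudely by $1/|\mathbf{m}\cdot\mathbf{v}_j|$ and invoke \equ{eq: for strongly Diophantine} directly with $M=T$ to get a bound $\ll_\delta (1+\eta)^k T^\delta$; and for $\|\mathbf{m}\|>T$, I use the fact that $r_{\mathcal{T}}(\mathbf{m})$ decays (since $\mathbf{t}_{d+1},\dots,\mathbf{t}_k$ are coordinate vectors, $r_{\mathcal{T}}(\mathbf{m})\le \prod_{i>d}\min(1,1/|m_{\sigma(i)}|)$ controls most coordinates) together with the Diophantine lower bound $|\mathbf{m}\cdot\mathbf{v}_j|\gg\|\mathbf{m}\|^{-s}$ to sum over dyadic shells $2^\ell < \|\mathbf{m}\|\le 2^{\ell+1}$ with $\ell\ge\log_2 T$; each shell contributes a power of $2^\ell$ beaten by the decay, and summing the geometric-type series in $\ell$ gives a bound that is $\ll T^{-A}$ for any $A$, hence absorbed. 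One must be slightly careful that the coordinates not pinned down by $r_{\mathcal{T}}$ (those indexed by $1,\dots,d$, roughly) are still controlled — here one uses that $\mathbf{v}_1,\dots,\mathbf{v}_d$ together with the chosen coordinate vectors form a basis, so $\|\mathbf{m}\|$ large forces at least one of $|\mathbf{m}\cdot\mathbf{v}_j|$ or some pinned coordinate to be large, giving the needed decay after possibly enlarging the exponent $s$. Choosing the split point as a suitable power $T^{\beta}$ and optimizing $\beta$ against $\delta$ finishes the argument; the $\eta$-dependence rides along unchanged as the overall factor $(1+\eta)^k$.
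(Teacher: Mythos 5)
Your approach differs from the paper's in one essential respect: you expand $\chi_U$ directly into its Fourier series, whereas the paper replaces $\chi_U$ by Selberg-type trigonometric \emph{polynomials} $\varphi_U\le \chi^{\T}_U\le\psi_U$ whose Fourier support is contained in $\{\|L^{t}\mathbf m\|\le M\}$ (Theorem~\ref{thm:extremal}). This is not a cosmetic difference, and it is where your argument has a real gap.

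The identity you start from,
\[
N_T(U,\mathbf x)-|U|\,|B_T|=\sum_{\mathbf m\ne 0}\widehat{\chi_U}(\mathbf m)\,e(\mathbf m\cdot\mathbf x)\int_{B_T}e(\mathbf m\cdot\mathbf t)\,d\mathbf t,
\]
is not justified. The Fourier coefficients of $\chi_U$ decay only like $r_{\mathcal T}(\mathbf m)$, which is not absolutely summable on $\Z^k$; so you cannot interchange the sum with the integral over $B_T$ (Fubini fails), nor does the multidimensional Fourier series of a discontinuous indicator converge pointwise along the measure-zero orbit $\pi(\mathbf x+B_T)$. Some regularization is required, and that is precisely what the paper's Theorem~\ref{thm:extremal} supplies: one-sided trigonometric polynomial approximants with \emph{finitely supported} Fourier coefficients, whose zeroth coefficient is within $O(b^{k-1}|\det L|/M)$ of $|U|$, and whose remaining coefficients obey the same $r_{\mathcal T}$-type bound. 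Integrating $\varphi_U\le\chi^{\T}_U\le\psi_U$ over $B_T-\mathbf x$ yields the Erd\H{o}s--Tur\'an inequality~\equ{eq:NTestimate} as a \emph{finite} sum over $\|L^t\mathbf m\|\le M$ plus the controlled term $|B_T|/M$; no tail ever appears, and the two-sidedness of the approximation is exactly what produces the absolute-value bound.

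Your dyadic treatment of the tail $\|\mathbf m\|>T$ is also too sketchy to trust. On a dyadic shell $2^\ell<\|\mathbf m\|\le 2^{\ell+1}$, the summand $r_{\mathcal T}(\mathbf m)\prod_j\min(2T,1/|\mathbf m\cdot\mathbf v_j|)$ can be close to $T^{d}$ on the (roughly) $2^{(k-d)\ell}$ integer points $\mathbf m$ for which all $|\mathbf m\cdot\mathbf v_j|$, $j\le d$, are below $1/T$ and the remaining pinned coordinates are $O(1)$. The generic Diophantine lower bound $|\mathbf m\cdot\mathbf v_j|\gg\|\mathbf m\|^{-s}$ does not immediately produce the geometric decay you assert; the exponent $s$ can be large compared with $k-d$, and the number of such near-resonant $\mathbf m$ per shell is nontrivial to control. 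Getting this right essentially forces you to redo, in the tail, the bookkeeping that the strongly Diophantine hypothesis already packages up to $\|\mathbf m\|\le M$ --- but without the built-in truncation. The paper's route is cleaner and quantitatively sharper: take $M=\lfloor T^d\rfloor$ in \equ{eq:NTestimate}, apply Proposition~\ref{prop: another bound} to the oscillatory integrals, and invoke \equ{eq: for strongly Diophantine} with $\vre=\delta/d$ to get $C(1+\eta)^kT^\delta$ outright.

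In short: the majorant/minorant construction is not an optional convenience here --- it is the device that makes the Fourier-analytic step rigorous and eliminates the infinite tail. Your Fourier-coefficient estimate $|\widehat{\chi_U}(\mathbf m)|\lesssim(1+\eta)^k|\det L|\,r_{\mathcal T}(\mathbf m)$ and your oscillatory-integral bound are both correct and agree with the paper's \equ{eq:FourierCoeffs} and Proposition~\ref{prop: another bound}, so once you replace $\chi_U$ by $\varphi_U$ and $\psi_U$ the rest of your argument collapses onto the paper's proof with $M=\lfloor T^d\rfloor$.
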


As above, we will show in Proposition \ref{prop: strengthening} that
there is a stronger Diophantine hypothesis, which still holds for
almost every $V$, under which $T^\delta$ in \equ{eq:
  equidistribution2} can be replaced
by $(\log T) ^{k+2d+\delta}$. 

\medskip
Justifying the terminology, we will see in \S \ref{section: reduction}
that a subspace with a strongly Diophantine basis is
Diophantine.
We will also see that almost every choice of $V$ (respectively
$\mathcal{T}$) satisfies the
Diophantine properties which are the hypotheses of Theorem \ref{thm:
  equidistribution for BL} (resp., Theorem 
\ref{thm: equidistribution for BDD}).
The conclusions of Theorems \ref{thm: main,
  BL} and \ref{thm: main, BDD} hold for these choices.

\medskip

Besides the cut-and-project method, another well-studied construction
of a separated net is the {\em substitution system}
construction, and results analogous to ours have appeared for
separated nets arising via substitution systems in recent work of
Solomon \cite{Solomon, Solomon2} and Aliste-Prieto, Coronel and
Gambaudo \cite{ACG}. Briefly, it was shown in these papers that all
substitution system separated nets are BL to lattices and many but not
all are BD to lattices. A particular case of interest is the {\em
  Penrose net}
obtained by placing one point in each tile of the Penrose aperiodic
tiling of the plane. 
Penrose net admits alternate descriptions via both the cut-and-project
and substitution system constructions. Using the latter approach,
Solomon \cite{Solomon} showed that that the Penrose net is BD to a
lattice.

\subsection{Organization of the paper}
In \S \ref{section: basics} we review basic material relating sections
for minimal
flows and separated nets, and the relation to cut-and-project
constructions. In \S \ref{section: BK} we state the results of
Burago-Kleiner and Laczkovich, and use these to connect the
properties of the separated net to quantitative equidistribution statements for
flows. In \S\ref{section:
  prelims} we discuss Minkowski dimension and show how to
approximate a section by aligned boxes if the Minkowski dimension of
the boundary is strictly smaller than $d$.
The main result of \S\ref{section: trig} is
Theorem \ref{thm:extremal}, which provides
good approximations to the indicator
function of parallelotopes in $\T^k$ by trigonometric polynomials.
We believe this result will be helpful for other problems in
Diophantine approximation and ergodic theory of linear toral flows.
In \S\ref{section: ET} we deduce an Erd\H{o}s-Tur\'an type inequality from
Theorem \ref{thm:extremal} and apply it to prove Theorems
\ref{thm: equidistribution for BL} and
\ref{thm:
  equidistribution for BDD}.
In \S \ref{section: reduction} we adapt arguments of W. Schmidt to
show that our Diophantine conditions are satisfied almost surely, and deduce Theorem \ref{thm: main, BL} and parts (1) and
(2) of Theorem \ref{thm: main, BDD} in \S\ref{section: proof BL}.
In
\S \ref{section: irregularities} we prove Theorem \ref{thm: main,
  BDD}(3).
\subsection{Acknowledgements}
We are grateful to Bruce Kleiner for suggesting the problem, and to
William Chen, Eric Latorre Crespo, Charles Radin,
Wolfgang Schmidt, Yaar Solomon and Jeffrey Vaaler for useful discussions. The authors
thank York University and Ben Gurion University for facilitating
mutual visits. The research of the first author was supported by EPSRC
grants EP/F027028/1 and EP/J00149X/1.
The research
of the third author was supported by Israel Science Foundation grant number
190/08 and ERC starter grant DLGAPS 279893.
\section{Basics}\name{section: basics}
\subsection{Bounded displacement}
We first recall the following well-known facts.
\begin{prop}\name{prop: linear maps do not change}
Any two lattices of the same covolume are BD to each other. Moreover, if
$Y \subset \R^d$ is BD to a lattice, and $T: \R^d \to \R^d$
is a linear isomorphism, then $T(Y)$ is also BD to a lattice.
\end{prop}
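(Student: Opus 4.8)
The plan is to establish the two assertions in turn; the second is essentially formal, while the first is the classical Hall marriage argument, and I will use throughout that BD is an equivalence relation on separated nets (reflexivity is clear, symmetry follows by inverting the bijection, and transitivity by adding displacements via the triangle inequality), so that ``BD to a lattice'' is a well-defined notion.

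Consider first the ``moreover'' clause. Suppose $f\colon Y\to\Lambda$ is a bijection onto a lattice $\Lambda$ with $D:=\sup_{\y\in Y}\|f(\y)-\y\|<\infty$, and let $T\colon\R^d\to\R^d$ be a linear isomorphism. Since $T$ is invertible, $T(\Lambda)$ is again a lattice, and I would simply conjugate: $g:=T\circ f\circ T^{-1}\colon T(Y)\to T(\Lambda)$ is a bijection, and for $\y'=T\y\in T(Y)$ one has $\|g(\y')-\y'\|=\|T(f(\y))-T(\y)\|=\|T(f(\y)-\y)\|\le c_T\,D$, where $c_T$ is any constant with $\|T\vv\|\le c_T\|\vv\|$ for all $\vv\in\R^d$. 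Thus $T(Y)$ is BD to the lattice $T(\Lambda)$.

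For the first assertion, let $\Lambda_1,\Lambda_2\subset\R^d$ be lattices of common covolume $v$. I would fix bounded fundamental domains $F_i$ for $\Lambda_i$ (say the half-open parallelepiped spanned by a basis, so that $0\in F_i$), with $|F_i|=v$ and $\diam F_i\le D_i<\infty$, and set $R:=D_1+D_2$. Form the bipartite graph $G$ with parts $\Lambda_1,\Lambda_2$, joining $\x\in\Lambda_1$ to $\y\in\Lambda_2$ precisely when $\|\x-\y\|\le R$; since lattices are uniformly discrete, $G$ is locally finite. The key step is to verify Hall's condition on \emph{both} sides with this single $R$. Given a finite $S\subseteq\Lambda_1$, the region $W:=\bigcup_{\x\in S}(\x+F_1)$ has measure $|S|\,v$; since the translates $\{\y+F_2:\y\in\Lambda_2\}$ tile $\R^d$, the ones that meet $W$ already cover $W$, and hence number at least $|W|/v=|S|$; and whenever $\y+F_2$ meets some $\x+F_1$ one has $\|\x-\y\|\le D_1+D_2=R$, so each such $\y$ is a neighbour of $S$ in $G$. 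Hence $|N_G(S)|\ge|S|$ for every finite $S\subseteq\Lambda_1$, and by the same argument with the roles reversed for every finite $S\subseteq\Lambda_2$.

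It then remains to deduce a perfect matching of $G$ from this two-sided Hall condition. Because $G$ is locally finite, the finite marriage theorem combined with a K\"onig's lemma (compactness) argument yields a matching saturating $\Lambda_1$ and, symmetrically, one saturating $\Lambda_2$; the graph version of the Cantor--Schr\"oder--Bernstein theorem (decompose the union of the two matchings into alternating paths and cycles, then select an edge set on each component) then produces a perfect matching. This is a bijection $f\colon\Lambda_1\to\Lambda_2$ with $\|f(\x)-\x\|\le R$ for all $\x$, so $\Lambda_1$ and $\Lambda_2$ are BD. I expect this last step to be the main obstacle: naive infinite Hall can fail, so one must genuinely exploit local finiteness (or cite an appropriate marriage theorem for countable graphs); by contrast the geometric input — the measure count and the choice $R=D_1+D_2$ — is routine.
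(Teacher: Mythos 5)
Your argument is essentially the same as the paper's: conjugating by $T$ for the ``moreover'' clause, and the Hall marriage argument with the volume-counting estimate (using bounded fundamental domains and $R=D_1+D_2$) for the first assertion. You are somewhat more careful than the paper in spelling out the passage from Hall's condition to a perfect matching in the infinite locally-finite bipartite graph (two-sided Hall plus a K\"onig compactness and Cantor--Schr\"oder--Bernstein step), whereas the paper simply checks Hall's condition on one side and cites the marriage lemma, but the underlying approach is identical.
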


\begin{proof}
Suppose $L_1$
and $L_2$ are lattices of the same covolume $\lambda$, and define a bipartite
graph $G$ whose vertices are
the points of $L_1 \cup L_2$, and  $\bx_1 \in L_1, \,  \bx_2 \in L_2$ are joined by an edge if
$\|\bx_1 - \bx_2\| \leq r_1+r_2$, where $r_i$  is the
diameter of a bounded fundamental domain for $L_i$. To verify
the conditions of the Hall marriage lemma
\cite{hall}, let $D_2$ be a fundamental domain for $L_2$, so that
$\R^d = \bigsqcup_{\by \in L_2} \by+D_2$, and let $A \subset L_1$ with $N
\df \# \, A$. Let $F$
denote the set of points in $\R^d$ which are within a distance $r_1$
from points of $A$. Then $F$ contains at least $N$ copies of a
fundamental domain for $L_1$ so has volume at least $N
\lambda$. Therefore $F$ intersects at least
$N$ of the sets $\{\by +  D_2 : \by \in L_2\}$. By the definition of $G$,
if $F$ intersects $\by+D_2$ then $\by$ is connected to an element of $A$
by an edge. This implies that the
number of neighbors of $A$ is at least $N$.
By an infinite variant of the marriage lemma and Schr\"oder-Bernstein theorem (see \cite{McMullen}, the proof of Theorem 4.1) we obtain our bijection.

Now suppose
$L$ is a lattice in $\R^d$ and $\phi: Y \to L$ is a bijection
moving points a uniformly bounded amount, then $T \circ \phi \circ
T^{-1}$ is a bijection $T(Y) \to T(L)$ and it moves points a bounded
amount because $T$ is Lipschitz. This proves the second assertion.
\end{proof}

\subsection{Sections and minimal actions}\name{subsection: sections}
A standard technique for studying flows was introduced by
Poincar\'e. Suppose $X$ is a manifold with a flow, i.e. an action of
$\R$. Given an embedded submanifold $\mathcal{S}$ transverse to the orbits, we can study the
return map to $\mathcal{S}$ along orbits, and in this way reduce the
study of the $\R$-action to the study of a $\Z$-action.
We will be
interested in a similar construction for the case of an $\R^d$-action,
$d>1.$ Namely, given a space $X$ equipped with an $\R^d$-action, we
say that $\mathcal{S} \subset X$ is a {\em good section}
if there are bounded neighborhoods $\mathcal{U}_1, \mathcal{U}_2$ of $\bf 0$ in $\R^d$,
such that for any $x \in X$:
\begin{itemize}
\item[(i)]
there is at most one ${\bf u} \in \mathcal{U}_1$ such that
${\bf u}.x \in \mathcal{S}$.

\item[(ii)]
there is at least one ${\bf u} \in
\mathcal{U}_2$ such that
${\bf u}.x \in \mathcal{S}$.
\end{itemize}

These conditions immediately imply that the set
$Y= Y_{\mathcal{S}, x}$ of visit times
defined in \equ{eq: visit set} is a separated net; moreover the
parameters $r, R$ appearing in the definition of a separated net may
be taken to be the same for all $x \in X$, since they depend only on
$\mathcal{U}_1, \mathcal{U}_2$ respectively.

The action is called {\em minimal} if there are no proper invariant
closed subsets of $X$, or equivalently, if all orbits are dense.
The following proposition shows that good sections always exist for
minimal actions on manifolds:

\begin{prop}\name{prop: minimal}
Suppose $X$ is a compact $k$-dimensional manifold equipped with a minimal
$\R^d$-action, and suppose $\mathcal{S} \subset X$ is the image of an
open bounded $\mathcal{O} \subset \R^{k-d}$ under a smooth injective map which is everywhere
transverse to the orbits and extends to the closure of $\mathcal{O}$. Then
$\mathcal{S}$ is a good section.

\end{prop}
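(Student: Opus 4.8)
The plan is to verify the two defining properties of a good section separately, using transversality and the compactness of $\overline{\mathcal O}$ for (i), and minimality and the compactness of $X$ for (ii). First I would fix notation: extend the given map smoothly to an open neighborhood $\mathcal O'$ of $\overline{\mathcal O}$, keeping transversality (an open condition) on $\mathcal O'$ after shrinking, and write $\psi$ for the resulting map. Since $\psi|_{\overline{\mathcal O}}$ is a continuous injection of a compact set, it is a homeomorphism onto its image, so $\mathcal S=\psi(\mathcal O)$ is relatively open in the embedded $(k-d)$-dimensional submanifold $\psi(\overline{\mathcal O})$ of $X$. Because the $(k-d)$-dimensional $\mathcal S$ is transverse to the orbits, the orbits near $\mathcal S$ must be $d$-dimensional (immersed) submanifolds, i.e.\ the action is locally free near $\mathcal S$. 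The workhorse is the flow-box map $h\colon\mathcal O'\times\R^d\to X$, $h(\mathbf o,\mathbf v)\df\mathbf v.\psi(\mathbf o)$, which satisfies $\mathbf w.h(\mathbf o,\mathbf v)=h(\mathbf o,\mathbf v+\mathbf w)$.

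For property (i) I would first observe that at each $\mathbf o\in\overline{\mathcal O}$ the differential $dh_{(\mathbf o,\mathbf 0)}$ sends $T_{\mathbf o}\R^{k-d}$ isomorphically onto $T_{\psi(\mathbf o)}\mathcal S$ and $\R^d$ isomorphically onto the tangent space of the orbit through $\psi(\mathbf o)$; these are transverse of complementary dimensions $k-d$ and $d$, so $dh_{(\mathbf o,\mathbf 0)}$ is an isomorphism. Hence $h$ is a local diffeomorphism along $\overline{\mathcal O}\times\{\mathbf 0\}$, and the inverse function theorem together with the compactness of $\overline{\mathcal O}$ gives $\delta,\varepsilon>0$ such that $h$ restricts to a diffeomorphism on $B_\delta(\mathbf o)\times B_\varepsilon(\mathbf 0)$ for every $\mathbf o\in\overline{\mathcal O}$. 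Using uniform continuity of $\psi^{-1}$ and of the flow on the compact set $\psi(\overline{\mathcal O})$, I would then choose $\varepsilon'\in(0,\varepsilon/2)$ so small that $\psi(\mathbf o')=\mathbf w.\psi(\mathbf o)$ with $\|\mathbf w\|<2\varepsilon'$ forces $\mathbf o'\in B_\delta(\mathbf o)$, and set $\mathcal U_1\df B_{\varepsilon'}(\mathbf 0)$. If $\mathbf u.x=\psi(\mathbf o)$ and $\mathbf u'.x=\psi(\mathbf o')$ with $\mathbf u,\mathbf u'\in\mathcal U_1$, then $h(\mathbf o',\mathbf 0)=\psi(\mathbf o')=(\mathbf u'-\mathbf u).\psi(\mathbf o)=h(\mathbf o,\mathbf u'-\mathbf u)$; since $\mathbf o'\in B_\delta(\mathbf o)$ while $\mathbf u'-\mathbf u,\mathbf 0\in B_\varepsilon(\mathbf 0)$, injectivity of $h$ there yields $\mathbf u=\mathbf u'$, which is (i).

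For property (ii) I would fix $\mathbf o_0\in\mathcal O$, shrink $\delta$ so that $B_\delta(\mathbf o_0)\subset\mathcal O$, and set $W\df h\bigl(B_\delta(\mathbf o_0)\times B_\varepsilon(\mathbf 0)\bigr)$, a nonempty open subset of $X$, every point of which has the form $\mathbf v.s$ with $s\in\mathcal S$ and $\|\mathbf v\|<\varepsilon$. By minimality every orbit is dense, so for each $x$ there is $\mathbf v_x$ with $\mathbf v_x.x\in W$; by continuity this persists on a neighborhood $N_x$ of $x$, and a finite subcover of the compact $X$ by such $N_x$ yields $R>0$ with $\{\mathbf v.x:\|\mathbf v\|\le R\}\cap W\neq\varnothing$ for all $x$. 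Given $x$, picking $\mathbf v$ with $\|\mathbf v\|\le R$ and $\mathbf v.x=\mathbf v'.s\in W$ (where $s\in\mathcal S$, $\|\mathbf v'\|<\varepsilon$) gives $(\mathbf v-\mathbf v').x=s\in\mathcal S$ with $\|\mathbf v-\mathbf v'\|<R+\varepsilon$; hence $\mathcal U_2\df B_{R+\varepsilon}(\mathbf 0)$ works, and $\mathcal S$ is a good section.

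The step I expect to be the main obstacle is the uniformity in (i): one must rule out $\mathcal S$ ``folding back'' onto itself as one approaches its boundary $\psi(\partial\mathcal O)$, and this is precisely where the compactness of $\overline{\mathcal O}$ and the fact that $\psi$ extends to an \emph{embedding} of $\overline{\mathcal O}$ (rather than merely an injective immersion of $\mathcal O$) have to be used with care. By contrast, the recurrence statement (ii) is the routine minimality-plus-compactness argument.
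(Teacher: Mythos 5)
Your proof is correct and follows essentially the same route as the paper's: both establish (i) by using transversality to get local uniqueness of orbit hits in a flow box and then compactness of $\overline{\mathcal O}$ to make the flow-box size uniform, and both establish (ii) by thickening $\mathcal S$ to an open set $W$ and using minimality plus compactness of $X$ to bound the return times. The only difference is that the paper states the uniformity in (i) as a quick "compactness argument" and cites syndeticity of return times for (ii) as a standard fact, whereas you work out the flow-box map and the finite-subcover argument explicitly; that is a matter of level of detail, not of method.
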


\begin{proof}
Since $\mathcal{S}$ is transverse to orbits, for every $x \in
\overline{\mathcal{S}}$ there is a bounded neighborhood $U=U_x$ of identity in $\R^d$ so
that for ${\bf u} \in U \sm \{{\bf 0}\}
$, ${\bf u}.x \notin \overline{\mathcal{S}}$. Since $\mathcal{O}$ is bounded, a
compactness argument shows that $U$ may be taken to be independent of
$x$, and we can take $\mathcal{U}_1$ so that $\mathcal{U}_1 -
\mathcal{U}_1 = \{\bx-\by: \bx, \by \in \mathcal{U}_1\} \subset U$, which immediately implies (i). Let
$$\widehat{\mathcal{S}} \df \{{\bf u}.s : {\bf u} \in U, s \in \mathcal{S} \}.$$
Then $\widehat{\mathcal{S}}$ is open in $X$. By a standard fact from
topological dynamics (see e.g. \cite{auslander}), the set of return
times
$$
\{{\bf u} \in \R^d: {\bf u}.x \in \widehat{\mathcal{S}} \}
$$
is {\em syndetic}, i.e. there is a bounded set $K$ such that for
any ${\bf w} \in \R^d$, there is ${\bf k} \in K$ with $({\bf w}+{\bf k}).x \in
\widehat{\mathcal{S}}$. By minimality this implies that for any $x \in X$
there is ${\bf k} \in K$ such that ${\bf k}.x \in \widehat{\mathcal{S}}$. Taking
$\mathcal{U}_2 = K-U$ we obtain (ii).
\end{proof}

If $X$ is not minimal, there will be some $x$ and $\mathcal{S}$
 for which $Y$ is not syndetic. However good sections exist for any
 action:
\begin{prop}\name{prop: sections exist}
For any action of $\R^d$ on a compact manifold, there are good
sections.

\end{prop}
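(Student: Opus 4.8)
The plan is to reduce the general case to the minimal case handled by Proposition \ref{prop: minimal} by passing to a minimal subset and thickening the section in the normal directions. Concretely, let $X$ be a compact manifold with an $\R^d$-action. First I would invoke Zorn's lemma to produce a nonempty closed invariant subset $Z \subseteq X$ which is minimal for the induced $\R^d$-action; such a $Z$ always exists by compactness. The subtlety is that $Z$ need not be a submanifold, so Proposition \ref{prop: minimal} does not apply to $Z$ directly, and in any case we need a good section for the action on all of $X$, not just on $Z$.

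To get around this, I would pick any point $z_0 \in Z$ and a small embedded open $(k-d)$-disk $D \subset X$ through $z_0$ transverse to the orbit $\R^d.z_0$ (possible by the implicit function theorem, since orbits of a smooth action are immersed submanifolds, and along the orbit of $z_0$ we may shrink so the map $\mathcal{U} \times D \to X$, $(\bu, y) \mapsto \bu.y$, is a diffeomorphism onto an open neighborhood for a suitable bounded neighborhood $\mathcal{U}$ of $\mathbf 0$). Set $\mathcal{S} = D$, viewed as the image of an open bounded subset of $\R^{k-d}$ under a smooth transverse embedding extending to the closure; after shrinking $D$ we may assume this embedding extends smoothly to $\overline{D}$. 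Condition (i) in the definition of a good section is then exactly the local transversality/injectivity just arranged: choosing $\mathcal{U}_1$ with $\mathcal{U}_1 - \mathcal{U}_1 \subset \mathcal{U}$ forces the visit times in $\mathcal{U}_1$ to be unique, by the same argument as in the proof of Proposition \ref{prop: minimal}.

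For condition (ii), I would use minimality of $Z$ together with the thickened set $\widehat{\mathcal{S}} = \{\bu.s : \bu \in \mathcal{U},\ s \in \mathcal{S}\}$, which is open in $X$ and meets $Z$ (it contains $z_0$). By the standard fact from topological dynamics used in Proposition \ref{prop: minimal}, the return-time set to $\widehat{\mathcal{S}}$ for the minimal system $Z$ is syndetic: there is a bounded $K \subset \R^d$ so that every $\R^d$-orbit in $Z$ hits $\widehat{\mathcal{S}}$ within $K$. Since $\widehat{\mathcal{S}}$ is open and $Z$ is closed, $K$-syndeticity of returns persists on a neighborhood of $Z$ in $X$ — but in fact we need it on all of $X$, and this is where the argument must be a little more careful than in the minimal-manifold case: $X \setminus \widehat{\mathcal{S}}$ is not compact-minus-open in a way that immediately yields a global bound. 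The cleanest fix is to observe that for \emph{every} $x \in X$, the orbit closure $\overline{\R^d.x}$ contains a minimal set, hence (after possibly enlarging the transverse disk $D$ to a union of finitely many such disks, one through a chosen point of each of finitely many minimal sets whose thickenings cover a neighborhood of the union of all minimal sets, and using that every orbit closure eventually enters such a neighborhood) one obtains a uniform bounded $K$. Then taking $\mathcal{U}_2 = K - \mathcal{U}$ gives (ii).

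The main obstacle is precisely the last point: unlike the minimal case, a single minimal subset's return times need not be syndetic as seen from arbitrary $x \in X$, since an orbit may spend unbounded time away from $Z$ before approaching it. The resolution is the compactness-of-the-space-of-minimal-sets maneuver sketched above: because $X$ is compact, every point's $\omega$-limit set is a nonempty closed invariant set containing a minimal set, and one can extract a finite subcover of thickened transverse disks whose union is an open neighborhood $W$ of $\bigcup(\text{minimal sets})$ such that every orbit enters $W$ within uniformly bounded time (otherwise a limiting orbit would avoid $W$ forever, contradicting that its $\omega$-limit set is a minimal set inside $W$). Taking $\mathcal{S}$ to be the disjoint union of these finitely many transverse disks (shrunk so they remain disjoint and each satisfies (i) with a common $\mathcal{U}_1$) then yields a good section for the full action.
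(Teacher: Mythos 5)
Your proof is correct in outline but takes a genuinely different and more circuitous route than the paper. The paper's argument is much shorter: cover the compact manifold $X$ by finitely many flow-boxes $\mathcal{U}\times\mathcal{S}_{x_j}\to\mathcal{O}_{x_j}$, perturb the transverse disks so their closures are pairwise disjoint, and take $\mathcal{S}=\bigcup_j\mathcal{S}_{x_j}$; condition (ii) then holds immediately with $\mathcal{U}_2=\mathcal{U}$ because the $\mathcal{O}_{x_j}$ cover $X$, and (i) follows from disjointness and compactness. You instead route the argument through minimal subsets: you correctly observe that a single transverse disk near one minimal set cannot give a uniform $\mathcal{U}_2$ (orbits may wander far from that minimal set before approaching it), and you repair this by covering a neighborhood $W$ of the closure of $\bigcup(\text{minimal sets})$ by finitely many thickened disks, then arguing by a compactness/limit argument and the fact that every $\omega$-limit set contains a minimal set that all orbits enter $W$ within uniformly bounded time. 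That argument is sound, but it imports machinery (Zorn for minimal sets, $\omega$-limit sets, a limiting-orbit contradiction) that is entirely avoidable, and it leaves a small loose end: after \emph{shrinking} the disks to make them disjoint you must ensure the thickenings still cover a neighborhood of the minimal sets --- the paper sidesteps the analogous issue by a small \emph{perturbation}, which keeps the $\mathcal{O}_{x_j}$ a cover of $X$. The moral is that once you commit to finitely many transverse disks, it is both simpler and stronger to choose them so that their thickenings cover all of $X$ rather than just a neighborhood of the minimal sets; then (ii) is automatic and no recurrence argument is needed.
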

\begin{proof}
Fix a bounded symmetric neighborhood $\mc{U}$ of $\bf 0$ in $\R^d$. We can
assume that $\mc{U}$ is sufficiently small, so that for each $x
\in X$ there is an embedded submanifold $\mc{S}_x$ of dimension $k-d$
such that the map
$$\mathcal{U} \times  \mathcal{S}_x \to
X, \ \ ({\bf u},x) \mapsto {\bf u}.x
$$
is a diffeomorphism onto a neighborhood $\mathcal{O}_x$ of $x$. By
compactness we can choose $x_1, \ldots, x_r$ so that the sets
$\mathcal{O}_{j} = \mc{O}_{x_j}$ are a cover of $X$.
By a small perturbation we can
ensure that the closures of the $\mathcal{S}_j = \mc{S}_{x_j}$ are disjoint. Let $\mathcal{S} = \bigcup_j
\mathcal{S}_j$, then it is clear by construction that (ii) holds for
$\mathcal{U}_2 = \mc{U}$. Since the $\mathcal{S}_j$ are disjoint, a
compactness argument shows (i).
\end{proof}

In both Propositions \ref{prop: minimal} and \ref{prop: sections exist}, the section we constructed was
the image of an open bounded subset $\mathcal{O} \subset \R^{k-d}$
under a smooth injective map which extends to the boundary of
$\mathcal{O}$. We call sections arising in this manner {\em
  $(k-d)$-dimensionally open and bounded.} 

The following will be useful when we want to go from a section to a
smaller one.

\begin{prop}\name{prop: replace with rational subspace}
Suppose $\mathcal{S}$ is a section for an $\R^d$ action on a space
$X$, $x \in X$, and
$\mathcal{S}= \bigsqcup_{i=1}^r \mathcal{S}_i$ is a partition into
subsets.  Suppose that for $i=1, \ldots, r$, each $Y_i \df
Y_{\mathcal{S}_i,x} $ is BD to a fixed lattice $L$. Then $Y_{\mathcal{S},x}$ is BD to a lattice.

\end{prop}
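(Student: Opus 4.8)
The plan is to produce the desired bijection $Y_{\mathcal{S},x} \to L'$ for a suitable lattice $L'$ by combining the $r$ given bijections $f_i : Y_i \to L$ with a ``spacing out'' trick that turns $r$ disjoint copies of $L$ into a single lattice of $r$ times the density. The essential point is that $Y_{\mathcal{S},x} = \bigsqcup_{i=1}^r Y_i$ as a set (a disjoint union because the $\mathcal{S}_i$ partition $\mathcal{S}$), and each piece is BD to $L$; so we must glue together $r$ bounded-displacement bijections onto $r$ disjoint translated copies of $L$, and then observe that such a union of translates is BD to a single lattice by Proposition \ref{prop: linear maps do not change}.

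Concretely, first I would fix a lattice $L \subset \R^d$ and, for each $i$, the bijection $f_i : Y_i \to L$ with $\sup_{\mathbf{y}\in Y_i}\|f_i(\mathbf{y}) - \mathbf{y}\| =: c_i < \infty$. Choose a sublattice $L'' \subset L$ of index $r$ (for instance $L'' = rL$ has index $r^d \ge r$, and we can pass to an intermediate lattice, or simply work with $L''=rL$ and absorb the extra factor later — any fixed-index sublattice will do) together with coset representatives $w_1, \ldots, w_r$ so that $L = \bigsqcup_{i=1}^r (w_i + L'')$. Define $g : Y_{\mathcal{S},x} \to \bigsqcup_{i=1}^r (w_i + L'')$ by $g(\mathbf{y}) = f_i(\mathbf{y})$ whenever $\mathbf{y} \in Y_i$. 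This is a bijection onto $L = \bigsqcup_i (w_i + L'')$, because each $f_i$ is a bijection onto all of $L$ — wait, that is the wrong target; instead I should compose: let $h_i : L \to w_i + L''$ be the bijection given by Proposition \ref{prop: linear maps do not change} (both $L$ and $w_i + L''$ are translates of lattices of the same covolume $r \cdot \mathrm{covol}(L)$, hence BD to each other, hence there is a bijection moving points a bounded amount $\le C_i$), and set $g(\mathbf{y}) = h_i(f_i(\mathbf{y}))$ for $\mathbf{y}\in Y_i$. Then $g$ is a bijection from $Y_{\mathcal{S},x}$ onto $L$, since the images $h_i(f_i(Y_i)) = h_i(L) = w_i + L''$ are exactly the $r$ cosets and they partition $L$. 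Moreover for $\mathbf{y}\in Y_i$ we have $\|g(\mathbf{y}) - \mathbf{y}\| \le \|h_i(f_i(\mathbf{y})) - f_i(\mathbf{y})\| + \|f_i(\mathbf{y}) - \mathbf{y}\| \le C_i + c_i \le \max_i(C_i + c_i) < \infty$, so $g$ is a bounded displacement bijection and $Y_{\mathcal{S},x}$ is BD to the lattice $L$.

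I expect the only subtle point to be making sure $g$ is genuinely a bijection: injectivity needs the $Y_i$ to be pairwise disjoint (true, from the partition of $\mathcal{S}$) and surjectivity needs the $r$ cosets $w_i + L''$ to partition $L$ (true by choice of coset representatives). Everything else is routine: disjointness of domains plus bijectivity of each $f_i$ onto $L$ plus bijectivity of each $h_i : L \to w_i + L''$ forces $g$ to be a bijection onto $L$, and the displacement bound is just the triangle inequality with finitely many finite constants. One should also double-check the count: we need a sublattice of index exactly $r$; if no such sublattice is convenient one can instead use $L'' = NL$ for any $N$ with $N^d \ge r$ and discard the unused cosets — but then surjectivity fails. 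The clean fix is to note that $\Z^d$ (hence any lattice) has sublattices of every index $m \ge 1$ (e.g. $\mathrm{diag}(m,1,\ldots,1)\Z^d$), so an index-$r$ sublattice always exists, and this is the step I would state carefully.
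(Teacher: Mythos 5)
Your high-level plan --- split $Y_{\mathcal{S},x}=\bigsqcup_i Y_i$, use the given BD bijections $f_i:Y_i\to L$, and ``spread'' the $r$ copies of $L$ into $r$ disjoint cosets of a common lattice --- is the right idea, but you have gone in the wrong direction with the lattice, and this creates a genuine error, not just a cosmetic one. You choose a \emph{sublattice} $L''\subset L$ of index $r$ and want bounded-displacement bijections $h_i:L\to w_i+L''$. Such bijections cannot exist: $L''$ has covolume $r\cdot\mathrm{covol}(L)$, so $L$ and $L''$ (hence $L$ and $w_i+L''$) have different asymptotic densities, and a bounded-displacement bijection between two separated nets forces them to have the same density. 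Your parenthetical assertion that ``both $L$ and $w_i+L''$ are translates of lattices of the same covolume $r\cdot\mathrm{covol}(L)$'' is false for $L$ itself when $r>1$, so the appeal to Proposition~\ref{prop: linear maps do not change} does not apply. The same density count shows the final conclusion is wrong as stated: $Y$ has roughly $r$ times the density of $L$, so it cannot be BD to $L$ for $r>1$.

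The fix is to go up instead of down, which is exactly what the paper does: take a \emph{superlattice} $\hat L\supset L$ with $[\hat L:L]=r$ (so $\mathrm{covol}(\hat L)=\mathrm{covol}(L)/r$, matching the density of $Y$), pick coset representatives $\mathbf{v}_1,\dots,\mathbf{v}_r$ for $\hat L/L$, and set $f(\mathbf{y})=f_i(\mathbf{y})+\mathbf{v}_i$ for $\mathbf{y}\in Y_i$. Each $f_i(\cdot)+\mathbf{v}_i$ is a bijection $Y_i\to \mathbf{v}_i+L$ moving points at most $c_i+\|\mathbf{v}_i\|$, and since $\hat L=\bigsqcup_i(\mathbf{v}_i+L)$ these glue to a single BD bijection $Y_{\mathcal{S},x}\to\hat L$. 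Note this avoids invoking Proposition~\ref{prop: linear maps do not change} altogether; the displacement bound is just a triangle inequality over finitely many pieces. Once you flip sublattice to superlattice, your injectivity/surjectivity bookkeeping is correct and the argument becomes essentially the paper's.
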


\begin{proof}
Clearly $Y_{\mathcal{S},x} = \bigsqcup_1^r Y_i$, and by
assumption, for each $i$ there is a bijection $f_i : Y_i \to
L$ moving points a bounded distance. Let $\hat{L}$ be a lattice containing $L$ as a subgroup of index
$r$ and let ${\bf v}_1, \ldots, {\bf v}_r$ be coset representatives for
$\hat{L}/L$.
Then
$$
f({\bf y}) = f_i({\bf y})+{\bf v}_i \ \ \ \mathrm{for \ } {\bf y} \in Y_i
$$
is the required bijection between $Y$ and $\hat{L}$.
\end{proof}

\begin{prop}\name{prop: changing section}
Suppose $\mathcal{S}_1, B$ are two good sections for an
$\R^d$-action on a space $X$. Let $\mathcal{U}_1, \, \mathcal{U}_2, \,
\mathcal{U}_1', \, \mathcal{U}_2'$ be the corresponding sets as in (i)
and (ii), for $\mathcal{S}_1$ and $B$ respectively, and assume that
\eq{eq: hypothesis 1}{
\mathcal{U}_2' - \mathcal{U}_2' \subset
  \mathcal{U}_1.
}
Then there is $\mathcal{S}_2 \subset B$,
a good section for  the action, such that for each $x \in X$, the nets
$Y_i =
Y_{\mathcal{S}_i,x}$ as in \equ{eq: visit set} ($i=1,2$) are BD to
each other.
\end{prop}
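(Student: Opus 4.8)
The plan is to construct $\mathcal{S}_2$ as a suitable subset of $B$ obtained by restricting the return map from $\mathcal{S}_1$, and then to set up a bijection between $Y_1$ and $Y_2$ that moves each point a bounded amount. First I would use the fact that $B$ is a good section to define, for each $x \in X$, the ``first return to $B$'' data: for every visit time $\mathbf{y} \in Y_1 = Y_{\mathcal{S}_1, x}$, the point $\mathbf{y}.x$ lies in $X$, and by property (ii) for $B$ there is some $\mathbf{u} \in \mathcal{U}_2'$ with $(\mathbf{y}+\mathbf{u}).x \in B$; by property (i) for $B$ (applied with the neighborhood $\mathcal{U}_1'$) such a $\mathbf{u}$ is essentially unique once we localize. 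This assigns to each $\mathbf{y} \in Y_1$ a nearby point $\mathbf{y}' \in Y_{B,x}$ with $\|\mathbf{y}' - \mathbf{y}\|$ bounded by $\operatorname{diam}(\mathcal{U}_2')$. I would then let $\mathcal{S}_2 \subset B$ be the set of all points of $B$ that arise this way — i.e. the image of $\mathcal{S}_1$ under the return-to-$B$ map — so that $Y_2 = Y_{\mathcal{S}_2,x}$ is exactly $\{\mathbf{y}' : \mathbf{y} \in Y_1\}$, and the map $\mathbf{y} \mapsto \mathbf{y}'$ is a surjection $Y_1 \to Y_2$ moving points a bounded distance.

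The key point that remains is to check that this map is actually a \emph{bijection}, not merely a surjection, and here is where the hypothesis \equ{eq: hypothesis 1} enters. Suppose $\mathbf{y}_1, \mathbf{y}_2 \in Y_1$ are mapped to the same point $\mathbf{z} \in Y_2$; writing $\mathbf{z} = \mathbf{y}_i + \mathbf{u}_i$ with $\mathbf{u}_i \in \mathcal{U}_2'$, we get $\mathbf{y}_1 - \mathbf{y}_2 = \mathbf{u}_2 - \mathbf{u}_1 \in \mathcal{U}_2' - \mathcal{U}_2' \subset \mathcal{U}_1$. But $\mathbf{y}_1.x$ and $\mathbf{y}_2.x$ both lie in $\mathcal{S}_1$, and $(\mathbf{y}_1 - \mathbf{y}_2) \in \mathcal{U}_1$ with $\mathbf{y}_1.x = (\mathbf{y}_1 - \mathbf{y}_2).(\mathbf{y}_2.x) \in \mathcal{S}_1$; property (i) for $\mathcal{S}_1$ then forces $\mathbf{y}_1 - \mathbf{y}_2 = \mathbf{0}$, i.e. $\mathbf{y}_1 = \mathbf{y}_2$. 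Hence the return-to-$B$ map is injective on $Y_1$, so it is a bijection $Y_1 \to Y_2$ of bounded displacement, which is exactly what \equ{eq: defn BD} requires.

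Finally I would verify that $\mathcal{S}_2$ is itself a good section. Since $\mathcal{S}_2 \subset B$, property (i) for $B$ (with its neighborhood $\mathcal{U}_1'$) is inherited by $\mathcal{S}_2$ verbatim. For property (ii): given any $x \in X$, apply property (ii) of $\mathcal{S}_1$ to find $\mathbf{w} \in \mathcal{U}_2$ with $\mathbf{w}.x \in \mathcal{S}_1$, then apply the return-to-$B$ construction to the point $\mathbf{w}.x$ to obtain $\mathbf{u} \in \mathcal{U}_2'$ with $(\mathbf{w}+\mathbf{u}).x \in \mathcal{S}_2$; thus $\mathcal{S}_2$ satisfies (ii) with the (bounded) neighborhood $\mathcal{U}_2 + \mathcal{U}_2'$. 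I expect the main obstacle to be purely organizational rather than deep: one must be careful that the return-to-$B$ map is well-defined as a genuine function (choosing the return time canonically, e.g. as the unique one in a fixed localized neighborhood guaranteed by the good-section axioms), and that all the bounded neighborhoods can be chosen uniformly in $x$ — but this is exactly the content of $\mathcal{S}_1$ and $B$ being good sections, so no new compactness argument beyond what is already packaged in that definition should be needed.
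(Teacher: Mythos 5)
Your proposal is correct and follows essentially the same route as the paper: define $\mathcal{S}_2$ as the image of $\mathcal{S}_1$ under a return-to-$B$ map, check that $\mathcal{S}_2$ is a good section (inheriting (i) from $B$ and getting (ii) from $\mathcal{U}_2 + \mathcal{U}_2'$), and show the induced map $Y_1\to Y_2$ is a bounded-displacement bijection, with injectivity coming from \equ{eq: hypothesis 1} together with property (i) for $\mathcal{S}_1$. The only point where the paper is slightly more explicit is surjectivity: it shows directly that any $\mathbf{u}'\in Y_2$ equals $F(\mathbf{u}'-\mathbf{u}_{s_1})$ for the appropriate $s_1\in\mathcal{S}_1$, whereas you treat surjectivity as immediate from the definition of $\mathcal{S}_2$ — this is fine provided the return time is chosen canonically per point of $\mathcal{S}_1$, as you note.
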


\begin{proof}
For each $x \in X$, let
$u_x \in \mathcal{U}_2'$ be such that ${\bf u}_x.x \in B$.
Let $\mathcal{S}_2 \df \{{\bf u}_x.x : x \in
\mathcal{S}_1\}$. First note that $\mathcal{S}_2$ is a good
section:
$\mathcal{U}''_2 \df \mathcal{U}_2 + \{{\bf u}_x: x \in
\mathcal{S}_1\}$ satisfies (ii) for $\mathcal{S}_2.$ Since
$\mathcal{U}'_1$ satisfies (i) for $B$, it also satisfies
(i) for $\mathcal{S}_2$.

Let $Y_i = Y_{\mathcal{S}_i, x} \ (i=1,2)$. It remains to show that the
$Y_i$ are BD. For each ${\bf u} \in Y_1$ we have $z={\bf u}.x \in \mathcal{S}_1$ so
that $F({\bf u}).x \in \mathcal{S}_2,$ where $F({\bf u})={\bf u}+{\bf u}_z$ and ${\bf u}_z \in
\mathcal{U}_2'$.  Clearly
$F$ moves all points a bounded distance, and maps $Y_1$ to $Y_2$. We
need to show that it is a
bijection. If ${\bf u}' \in Y_2$ then ${\bf u}'.x =s_2 \in \mathcal{S}_2$, which implies
that there is $s_1 \in \mathcal{S}_1$ with $s_2 = {\bf u}_{s_1}.s_1$. This
implies that $s_1 = ({\bf u}'-{\bf u}_{s_1}).x$ so that ${\bf u}' - {\bf u}_{s_1} \in Y_1$
satisfies $F({\bf u}'-{\bf u}_{s_1})= {\bf u}'-{\bf u}_{s_1}+{\bf u}_{s_1} = {\bf u}'$. Thus $F$ is
surjective. Now suppose ${\bf u}_1, {\bf u}_2 \in Y_1$ such that
$$
{\bf u}_1+{\bf u}_{z_1} = F({\bf u}_1)=F({\bf u}_2) = {\bf u}_2+{\bf u}_{z_2},
$$
where $z_i = {\bf u}_i x \in
\mathcal{S}_1$. Then by \equ{eq: hypothesis 1},
${\bf u}_2 - {\bf u}_1 = {\bf u}_{z_1} - {\bf u}_{z_2} \in \mathcal{U}_1$, so by (i) with $x
= s_1$, we conclude that ${\bf u}_2=
{\bf u}_1$.
\end{proof}

The nets $Y_{\mathcal{S},x}$ depend on the choice of $x$ and
$\mc{S}$. As Theorem \ref{thm: main, BDD} shows, different choices of $\mc{S}$ will
lead to very different separated nets. However, as
the following result shows, for
much of our discussion the choice of $x$ is immaterial.

\begin{prop}\name{prop: point immaterial}
Suppose $X$ is a minimal dynamical system and  $\mathcal{S}$ is a  good
section which is $(k-d)$-dimensionally open. If there is $x_0 \in X$
for which
the separated net $Y_{\mc{S},x_0}$ is BD (resp. BL) to a
lattice, then for every $x \in X$, the net $Y_{\mc{S},x}$ is also BD
(resp. BL) to a lattice.
\end{prop}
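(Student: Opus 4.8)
The plan is to compare the two visit sets $Y_{\mathcal{S},x_0}$ and $Y_{\mathcal{S},x}$ directly by moving along the orbit. Fix $x \in X$. Since $X$ is minimal and $\mathcal{S}$ is $(k-d)$-dimensionally open, the "fattened" section $\widehat{\mathcal{S}}$ has nonempty interior, so there is some $\mathbf{w} \in \R^d$ with $\mathbf{w}.x_0 \in \widehat{\mathcal{S}}$ arbitrarily close (in fact we want $\mathbf{w}.x_0$ close to $\mathcal{S}$ itself, and by shrinking we can arrange $\mathbf{w}.x_0$ to lie in a small transverse neighborhood of $x$, and then slide it onto $\mathcal{S}$). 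The key point is that minimality lets us find $\mathbf{w}$ with $\mathbf{w}.x_0$ as close as we like to $x$ — more precisely, close enough that the local product structure near $x$ (the diffeomorphism $\mathcal{U} \times \mathcal{S}_x \to \mathcal{O}_x$ used in the proof of Proposition \ref{prop: sections exist}) identifies a neighborhood of $\mathbf{w}.x_0$ in its orbit-transversal with a neighborhood of $x$.

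First I would make this precise: using minimality, choose $\mathbf{w} \in \R^d$ such that $\mathbf{w}.x_0$ lies in a small neighborhood $\mathcal{O}$ of $x$ on which there is a local product structure, i.e. every point of $\mathcal{O}$ is written uniquely as $\mathbf{u}.s$ with $\mathbf{u}$ in a small neighborhood of $\mathbf{0}$ and $s$ in a local transversal through $x$. Then I would define a translation-like map on the visit sets: for $\mathbf{t} \in Y_{\mathcal{S}, x}$, the point $\mathbf{t}.x$ lies in $\mathcal{S}$; I want to relate this to a nearby visit time of the $x_0$-orbit. The cleanest route is to observe that $Y_{\mathcal{S},x} = Y_{\mathcal{S}, x_0} - \mathbf{w}'$ up to a bounded correction, where $\mathbf{w}'$ is chosen so that $\mathbf{w}'.x_0$ is exactly on $\mathcal{S}$ near $x$; the discrepancy between $\mathbf{t} + \mathbf{w}'$ landing in $\mathcal{S}$ versus landing near $\mathcal{S}$ is absorbed by composing with the return map along orbits, which moves points only a bounded amount since $\mathcal{S}$ is a good section (the neighborhoods $\mathcal{U}_1, \mathcal{U}_2$ are uniform in $x$). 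Concretely, one shows there is a bijection $Y_{\mathcal{S},x} \to Y_{\mathcal{S},x_0}$ that is the composition of the translation by $\mathbf{w}'$ with a bounded "snapping to the section" correction, hence is a bounded displacement map.

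Then the conclusion follows by transitivity of the equivalence relations: if $\psi: Y_{\mathcal{S},x_0} \to L$ is a BD (resp. BL) bijection to a lattice, and $g: Y_{\mathcal{S},x} \to Y_{\mathcal{S},x_0}$ is the BD bijection just constructed, then $\psi \circ g$ exhibits $Y_{\mathcal{S},x}$ as BD (resp. BL) to $L$; here one uses that BD is transitive and that BD composed with BL (in either order) is again BL, which is elementary. **The main obstacle** I expect is the bookkeeping in the previous paragraph: making the "snapping to the section" correction genuinely well-defined as a bijection and genuinely bounded, uniformly in the point, when $\mathbf{w}.x_0$ is only approximately on $\mathcal{S}$. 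This is where the hypothesis that $\mathcal{S}$ is $(k-d)$-dimensionally open is used — it guarantees the fattened section $\widehat{\mathcal{S}}$ is open with nonempty interior, so that a whole transversal disc through $x$ is covered by orbit-translates of $\mathcal{S}$ by a uniformly bounded amount, which is exactly what lets the correction map be defined and controlled. One should also check that the argument is symmetric (so "resp." really does go both ways) — but since the roles of $x$ and $x_0$ can be swapped, and minimality is symmetric, this is automatic.
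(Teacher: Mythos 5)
Your plan hinges on constructing a single global bounded-displacement bijection $g\colon Y_{\mathcal{S},x}\to Y_{\mathcal{S},x_0}$, obtained by translating by a fixed $\mathbf{w}'$ and then ``snapping to the section.'' This is the gap, and it is not just bookkeeping: the existence of such a $g$ is precisely the statement that the two nets $Y_{\mathcal{S},x_0}$ and $Y_{\mathcal{S},x}$ are BD to \emph{each other}, and the authors explicitly record (in a margin note following this very proposition) that they do not know whether this is always true. Since $X$ is a manifold of dimension $k>d$, the orbit $V.x_0$ has empty interior, so generically no $\mathbf{w}'$ satisfies $\mathbf{w}'.x_0=x$ exactly. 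Once $\mathbf{w}'.x_0$ is merely close to $x$, the two orbits $V.x$ and $V.(\mathbf{w}'.x_0)$ need not stay uniformly close as $\|\mathbf{t}\|\to\infty$ (the proposition is stated for a general minimal $\R^d$-action, with no equicontinuity assumed), so the proposed ``snapping'' correction is not controlled at all away from the base point. Even in the equicontinuous case, boundary effects (visits near $\partial\mathcal{S}$ flipping in or out) are not obviously absorbed by a bounded correction; bounding them is essentially a discrepancy estimate, which is what the whole paper is trying to establish.

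The paper's proof avoids this trap. It never builds a bijection $Y\to Y_0$. Instead, it picks $\mathbf{u}_n$ with $\mathbf{u}_n.x_0\to x$, and observes that on each bounded ball $B(\mathbf{0},T)$ the translated net $Y_0-\mathbf{u}_n$ agrees with $Y$ up to an $\varepsilon$-perturbation for $n$ large enough. This yields, for each radius $k$, a \emph{local} map $f_k\colon B(\mathbf{0},k)\cap Y\to\mathcal{L}$ (obtained by transporting along $Y_0-\mathbf{u}_{n(k)}$ and composing with the given $f\colon Y_0\to\mathcal{L}$, suitably re-centered) whose displacement is bounded by a constant independent of $k$. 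Crucially, the translation $\mathbf{u}_{n(k)}$ depends on $k$; the local maps $f_k$ do not cohere into a single bijection $Y\to Y_0$. Rather, one passes to a diagonal subsequence along which $f_k(\mathbf{y})$ stabilizes for each $\mathbf{y}\in Y$, and the pointwise limit $\hat f\colon Y\to\mathcal{L}$ is the desired BD bijection directly to the lattice. So the compactness/diagonalization step is the essential new idea, and it is exactly what your transitivity plan is missing. You should reorganize the argument to target a bijection from $Y_{\mathcal{S},x}$ to $\mathcal{L}$ directly, as a limit of bounded-scale approximations, rather than factoring through $Y_{\mathcal{S},x_0}$.
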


\begin{proof}
We will prove the
statement for the case of the BD equivalence relation, leaving the
other case to the reader.

Write $Y_0 \df Y_{\mc{S}, x_0}$ and $Y \df Y_{\mc{S}, x}$.
Let $\mc{L} \subset \R^d$ be a lattice and let $f : Y_0
\to \mc{L}$ be a bijection satisfying 
$$
K \df \sup_{\by \in Y} \|\by - f(\by)\| < \infty.
$$
Let $\Omega$ be a compact fundamental domain for the action of $\mc{L}$ on
$\R^d$, that is for each ${\bf z} \in \R^d$ there are unique $\boldsymbol{\ell} = \boldsymbol{\ell}({\bf z}) \in
\mc{L}, \, \boldsymbol{\omega} = \boldsymbol{\omega}({\bf z})\in \Omega$ with ${\bf z} = \boldsymbol{\ell} + \boldsymbol{\omega}.$
Let $x \in X$ and
let ${\bf u}_n \in \R^d$ such that ${\bf u}_n.x_0 \to x$. Using the continuity of the
action on $X$, and the assumption that $\mc{S}$ is
$(k-d)$-dimensionally open, it is easy to see that the translated nets
$Y_0 - {\bf u}_n$ converge to $Y$ in the following
sense. Let $B({\bf x},T)$ denote the Euclidean open ball of radius $T$
around ${\bf x}$.  For any $T>0$ for which there is no element of $Y$ of norm $T$,
and any
$\vre>0$ there is $n_0$ such that for any $n>n_0$, there is a
bijection between $B({\bf 0},T) \cap Y$ and $B({\bf 0},T) \cap (Y_0 - {\bf u}_n)$ moving
points at most a distance $\vre$.

Now
for each $k$ we take $n = n(k)$ large
enough so that for each $\by \in B({\bf 0},k) \cap Y$, there is $\bx=\bx(\by) \in Y_0- {\bf u}_n$ with
$\|\by-\bx\|< 1.$ Define $f_k: B({\bf 0},k) \cap Y\to \mc{L}$ by
$$
f_k({\bf y}) \df f(\bx(\by)+{\bf u}_n) - \boldsymbol{\ell}({\bf u}_n).
$$
Then for each $k \geq k_0 > 0,$ and each ${\bf y} \in B({\bf 0},k) \cap Y$,
\[
\begin{split}
\|\by - f_k(\by) \| & \leq \|\by-\bx(\by) \| +\|\bx(\by)+{\bf u}_n- f(\bx(\by)+{\bf u}_n)\| + \|{\bf u}_n -\boldsymbol{
\ell}({\bf u}_n) \|
\\ &\leq 1+K+\diam(\Omega)
;
\end{split}
\]
that is, points in
$B({\bf 0}, k_0) \cap Y$ are moved a uniformly bounded distance by the maps
$f_k, \, k\geq k_0$. In particular the set
of possible values of the maps $f_k(\by), \, k
\geq k_0$ is finite.
Thus by a diagonalization procedure we may choose a subset of the
$f_k$ so that for each $\by \in Y$, $f_k(\by)$ is eventually constant. We
denote this constant by $\hat{f}(\by)$. Now it is easy to check that $\hat{f}$ is a
bijection satisfying \equ{eq: defn BD}.
\end{proof}
\combarak{I do not know whether the nets $Y_{\mathcal{S}, x_0},
    Y_{\mathcal{S}, x}$ are always BD.}

We now specialize to linear actions on tori. It is known that a linear
action of a $d$-dimensional subspace $V \subset \R^k$ on $\T^k$ as in
\equ{eq: toral linear action} is
minimal if and only if $V$ is {\em totally irrational}, i.e., $\pi(V)$ is dense in $\T^k$.
Suppose $V$ is totally irrational and of dimension $d$, so that the
action of $V$ on $\T^k$ is minimal. Note that when using this action
to define separated nets via \equ{eq: visit set}, one needs to fix an
identification of $V$ with $\R^d$; however, in light of Proposition
\ref{prop: linear maps do not change},
for the questions we will be considering, this choice will be
immaterial.

 Let $W$ be
a subspace of dimension $k-d$, such that
$\R^k = V \oplus W$. For any bounded open subset
$B'$ in $W$, such that $\pi|_{\bar{B'}}$ is injective, $B \df \pi(B')$ is a good section, in
view of Proposition \ref{prop: minimal}. We do not assume
that $W$ is totally irrational, so that $\pi$ need not be globally
injective on $W$. Such sections will be called {\em linear} sections.

When discussing sections, there is no loss of
generality in considering linear sections:
\begin{cor}\name{cor: wnlg}
Let $V$ and $W$ be as  above, and assume
$W$ is totally irrational. Then for any good section $\mathcal{S}$ for the
linear action of $V$ on $\T^k$, there is a linear section
$\mathcal{S}' \subset \pi(W)$ such that for any $x \in \T^k$,
$Y_{\mathcal{S},x}$ and $Y_{\mathcal{S}',x}$ are BD.
\end{cor}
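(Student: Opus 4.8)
The plan is to reduce the statement to Proposition \ref{prop: changing section}, taking $\mathcal{S}_1 = \mathcal{S}$ and choosing for $B$ a suitably large linear section contained in $\pi(W)$. Let $\mathcal{U}_1, \mathcal{U}_2$ be the neighborhoods of ${\bf 0}$ in $\R^d$ witnessing conditions (i) and (ii) for the given good section $\mathcal{S}$. First I would fix a bounded convex symmetric neighborhood $\mathcal{U}_2'$ of ${\bf 0}$ in $\R^d \cong V$ with $\mathcal{U}_2' - \mathcal{U}_2' \subset \mathcal{U}_1$, and then produce a bounded open $B' \subset W$ on which $\pi$ is injective and with $\pi(B' - \mathcal{U}_2') = \T^k$ (here $\mathcal{U}_2'$ is viewed inside $V$ via the action). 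The latter equality is exactly condition (ii) for $B \df \pi(B')$ with respect to $\mathcal{U}_2'$, while injectivity of $\pi$ on $B'$ gives condition (i) for $B$ with a small enough $\mathcal{U}_1'$: the integer vectors that could produce two visits to $B$ within a small neighborhood all have nonzero component along $V$ once $\pi$ is injective on $B'$. Thus $B$ is a good section and $\mathcal{U}_2' - \mathcal{U}_2' \subset \mathcal{U}_1$ is precisely hypothesis \equ{hypothesis 1}, so Proposition \ref{prop: changing section} yields a good section $\mathcal{S}_2 \subset B$ with $Y_{\mathcal{S},x}$ and $Y_{\mathcal{S}_2,x}$ BD for every $x \in \T^k$. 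Since $\pi$ is injective on $B'$, we have $\mathcal{S}_2 = \pi(C)$ for a bounded $C \subset B' \subset W$, so $\mathcal{S}_2$ is a linear section inside $\pi(W)$, and $\mathcal{S}' \df \mathcal{S}_2$ is as required.

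The construction of $B'$ is where the hypothesis that $W$ is totally irrational is used, and it is the main point. Writing $\pi_V \colon \R^k \to V$ for the projection along $W$, total irrationality of $W$ (density of $\pi(W)$ in $\T^k$) is equivalent to density of $\pi_V(\Z^k)$ in $V$; a short computation then shows $\pi(W - \mathcal{U}_2') = \T^k$. When $W \cap \Z^k = \{0\}$, every bounded subset of $W$ has injective $\pi$-image, so one exhausts $W$ by larger and larger balls and uses compactness of $\T^k$ to extract a single such $B'$ with $\pi(B' - \mathcal{U}_2') = \T^k$. When $W \cap \Z^k \ne \{0\}$, one instead exhausts $W$ by products of a fixed open fundamental parallelepiped for $W \cap \Z^k$ in its span with larger and larger balls in a complementary subspace of $W$; one checks that $\pi$ is injective on each such set and that their $\pi$-images together form a dense subset of $\T^k$, so the same compactness argument applies.

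The only real difficulty is thus this construction: one needs $B'$ large enough that return times of $V$-orbits to $B = \pi(B')$ fall inside the prescribed small neighborhood $\mathcal{U}_2'$, yet shaped so that $\pi$ stays injective on it --- the latter being essential both for $B$ to be a genuine good section and for the output $\mathcal{S}_2 = \pi(C)$ to be a linear section. The density of $\pi_V(\Z^k)$ supplied by total irrationality of $W$ is exactly what makes these two demands compatible; once $B'$ is in hand, the remainder is a formal application of Proposition \ref{prop: changing section}.
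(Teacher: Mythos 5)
Your proof follows the paper's route exactly: apply Proposition \ref{prop: changing section} with $\mathcal{S}_1=\mathcal{S}$ and $B$ a large linear section $\pi(B')\subset\pi(W)$, using density of $\pi(W)$ to make the return neighborhood $\mathcal{U}_2'$ for $B$ small enough that \equ{eq: hypothesis 1} holds. The one place you go beyond the paper's argument is in insisting that $\pi$ be injective on $B'$ and in treating the two cases $W\cap\Z^k=\{0\}$ and $W\cap\Z^k\neq\{0\}$ separately. The paper simply takes $B'$ to be a sufficiently large ball in $W$ and does not address injectivity. But ``$W$ totally irrational'' in the paper's sense means only that $\pi(W)$ is dense, which does \emph{not} force $W\cap\Z^k=\{0\}$ (e.g.\ $W=\spa\{(1,0,0),(0,1,\sqrt2)\}\subset\R^3$ has $\pi(W)$ dense yet contains $(1,0,0)\in\Z^3$), and in that case no large ball in $W$ can have injective $\pi$-image. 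Your replacement of the ball by a product of an open fundamental parallelepiped for $W\cap\Z^k$ with a large ball in a complement is the correct way to reconcile density of $\pi(B'-\mathcal{U}_2')$ with injectivity of $\pi|_{\overline{B'}}$, so it genuinely fills a small gap. Otherwise the two proofs coincide; both, incidentally, leave implicit the verification that the resulting $\mathcal{S}_2=\pi(C)$ inherits the openness required by the paper's definition of a linear section.
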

\begin{proof}
Since $W$ also acts
minimally, for any $\vre>0$, there is a sufficiently large ball $B'
\subset W$ such that
$B = \pi(B')$ is
$\vre$-dense in $\T^k$. That is, we can make the neighborhood
$\mathcal{U}_2$ appearing in (ii) as small as we wish. Thus,
given any section $\mathcal{S}$ for the action of $V$, we can make
$B'$ large enough so that \equ{eq: hypothesis 1} holds. So the claim
follows from
Proposition \ref{prop: changing section}.
\end{proof}

{\em When we say that the section
$\mathcal{S}$ is $(k-d)$-dimensionally open, bounded, is a
parallelotope, etc., we mean that $\mathcal{S}
  = \pi(\mathcal{S}')$ where $\mathcal{S}'\subset W$ has the corresponding
  properties as a subset of $W \cong \R^{k-d}$.}

\ignore{

We have chosen the $W$ to be totally irrational to ensure
that $\pi|_{B'}$ has no self-intersections.
\combarak{if we want to do so, one can prove that there is no loss of
  generality in assuming $W$ is completely arbitrary, this is by
  chopping up an arbitrary section into small pieces of equal volume
  and projecting them onto the image of $W$. If the dynamical net
  resulting from each small piece is BDD to a lattice, and the
  expansion constants are the same for each piece (which is why we
  took equal volume for each piece), then we can recover BDD of the
  original dynamical net to a lattice by ``interlacing''.}

}

\subsection{Cut and project nets}\name{subsection: tilings}
Fix a direct sum
decomposition $\R^k =V \oplus W$ into $V \cong \R^d, \, W \cong \R^{d-k}$. Let $\pi_V: \R^k
\to V$ and $\pi_W: \R^k \to W$ be the projections associated with this
direct sum decomposition. Suppose $L \subset \R^k$ is a lattice, and
$K \subset W$ is a non-empty bounded open set. The
{\em cut-and-project construction} associated to this data is
$$
\mathcal{N} = \mathcal{N}_{L,K,V,W} \df \{\bx \in V: \exists \by \in L, \pi_V(\by)=\bx, \pi_W(\by) \in
K\}.
$$
The set $\mathcal{N}$  is always a separated net in $V \cong \R^d$,
and under suitable assumptions, is 
aperiodic (e.g. is not a finite union of lattices). 
This is a particular  case of a family of more general constructions involving
locally compact abelian groups.
We refer to \cite{Senechal, MQ} for more details.

Unsurprisingly, the construction above may be seen as a toral dynamics
separated net. Since we will not be using it, we leave the proof of the following to the reader:
\begin{prop}\name{prop: toral cut and project}
Given $L$, $\R^k = V \oplus W$ and $K \subset W$ as above, there is a
linear subspace $V' \subset \R^k$, a section $\mathcal{S} \subset
\T^k$, and $x \in \T^k$, such that $\mathcal{N}_{L,K,V,W} =
Y_{\mathcal{S},x}$, where $Y_{\mathcal{S},x}$ is as in \equ{eq: visit
  set} for the action \equ{eq: toral linear action}.
\end{prop}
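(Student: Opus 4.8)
The plan is to realize the cut-and-project set $\mathcal{N}_{L,K,V,W}$ as a visit set for a linear flow on $\T^k$, by using the lattice $L$ to coordinatize $\R^k$. Since $L$ is a lattice in $\R^k$, there is a linear isomorphism $A : \R^k \to \R^k$ with $A(\Z^k) = L$; equivalently, $A$ descends to a diffeomorphism $\bar A : \T^k = \R^k/\Z^k \to \R^k/L$. First I would transport everything through $A^{-1}$: set $V' \df A^{-1}(V)$, $W' \df A^{-1}(W)$, and $K' \df A^{-1}(\pi_W(\text{the }K\text{-slab}))$, more precisely let $\mathcal{S} \df \pi\bigl(A^{-1}(\pi_W^{-1}(K))\bigr)$ intersected with a fundamental domain so that $\pi$ is injective there — note $\pi_W^{-1}(K) = V \oplus K$ is a slab, and $A^{-1}$ of it is the slab $V' \oplus K'$ over the subspace $W'$, which is exactly the kind of linear section considered in \S\ref{subsection: sections} (up to choosing a bounded piece, see below). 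Then I would take $x = \pi(0)$, the image of the origin.

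The key computation is the identification $\mathcal{N}_{L,K,V,W} = Y_{\mathcal{S},x}$ under the identification $V \cong \R^d$ via $A|_{V'} : V' \to V$ (so that the acting subspace is $V'$ and the flow is \equ{eq: toral linear action}). Unwinding definitions: $\bx \in V$ lies in $\mathcal{N}$ iff there is $\by \in L$ with $\pi_V(\by) = \bx$ and $\pi_W(\by) \in K$, i.e. iff $\by \in \bx + (V \cap \ker \pi_W)^{\perp}$... more cleanly, iff $\by - \bx \in W$... actually $\pi_V(\by) = \bx$ means $\by = \bx + w$ for some $w \in W$ with $w = \pi_W(\by) \in K$, so $\mathcal{N}$ consists of those $\bx \in V$ such that $(\bx + W) \cap L \cap (V \oplus K) \neq \varnothing$, equivalently $\bx + W$ meets $L$ inside the slab. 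Pulling back by $A^{-1}$: writing $\bx' = A^{-1}\bx \in V'$ and using $A^{-1}L = \Z^k$, this says $(\bx' + W') \cap \Z^k \cap (V' \oplus K') \neq \varnothing$, i.e. there is $\vm \in \Z^k$ with $\vm - \bx' \in W'$ and $\vm \in V' \oplus K'$. But $\vm - \bx' \in W'$ means $\pi(\bx' + \vm) = \pi(\bx')$ hits the same point as translating $\bx'$ by the integer vector; the condition becomes precisely $\pi(\bx') \in \mathcal{S}$ where $\mathcal{S}$ is the image of the slab $V' \oplus K'$ restricted appropriately, i.e. $\bx' \in Y_{\mathcal{S}, \pi(0)}$. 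I would write this chain of iff's out carefully, tracking the bookkeeping of which $\vm$ witnesses membership.

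One technical wrinkle is that the ``section'' $\mathcal{S}$ produced this way is the image of a slab $V' \oplus K'$, which is not of the form $\pi(B')$ for $B'$ a \emph{bounded} open subset of a complementary subspace $W$ as in the definition of a linear section — the fibers of $\pi_W$ inside the slab run along all of $V'$. The fix is standard: replace the slab by $\pi_{W'}^{-1}(K) \cap F$ for a suitable bounded fundamental-domain-type region, or more simply observe that because $L$ is a lattice and $K$ is bounded, the set $L \cap (V \oplus K)$ is a section-like object and one can restrict $\by$ to a bounded transversal; alternatively one notes that $W'$ can be taken as a genuine complement and the slab, being $W'$ translated along $V'$, projects to exactly $\pi(K')$ once one reduces the $V'$-component mod the flow — this is why the statement only claims existence of \emph{some} section $\mathcal{S}$ and \emph{some} $x$, not that it is literally bounded or a parallelotope.

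I expect the main obstacle to be purely organizational rather than deep: getting the direct-sum decompositions, the linear change of variables $A$, and the reduction mod $\Z^k$ to line up so that the two defining conditions (``$\pi_V(\by) = \bx$'' and ``$\pi_W(\by) \in K$'') translate into the single condition ``$\vx'.\pi(0) \in \mathcal{S}$''. Since the authors explicitly leave this to the reader and remark it is ``unsurprising,'' no genuinely hard estimate or new idea is needed; the proof is a change-of-coordinates argument of a few lines once the setup is fixed, which is presumably why it is omitted.
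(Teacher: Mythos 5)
The paper leaves this proposition unproved ("we leave the proof of the following to the reader"), so there is no paper argument to compare against; I will just assess correctness.

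Your overall strategy is right — pick a linear $A$ with $A(\Z^k)=L$, set $V'=A^{-1}V$, pull everything back by $A$, and note $A^{-1}L=\Z^k$ — and the central chain of equivalences (``$\bx\in\mathcal{N}$ iff $(\bx'+\Z^k)\cap(\text{window})\ne\varnothing$'') is correct. The gap is in the construction of $\mathcal{S}$. You set $\mathcal{S}$ to be (roughly) $\pi$ of the slab $A^{-1}(\pi_W^{-1}(K))=V'+K'$, where $K'=A^{-1}K$; but $V'+K'$ is a $k$-dimensional set, its $\pi$-image is essentially all of $\T^k$ once $V'$ is totally irrational, and in any case has the wrong dimension — it is not a section, and ``intersecting with a fundamental domain'' or ``reducing the $V'$-component mod the flow'' are not operations that $\pi$ performs, so the attempted fixes don't rescue it. The slab should never enter: the correct section is simply the bounded $(k-d)$-dimensional set $\mathcal{S}\df\pi(-K')=\pi(-A^{-1}K)\subset\T^k$, with $x=\pi(\mathbf{0})$. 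Then, identifying $V'\cong\R^d$ via $A|_{V'}$, one has for $\bx\in V$ and $\bx'=A^{-1}\bx$:
\[
\bx\in\mathcal{N}
\;\Longleftrightarrow\; L\cap(\bx+K)\ne\varnothing
\;\Longleftrightarrow\; \Z^k\cap(\bx'+K')\ne\varnothing
\;\Longleftrightarrow\; \pi(\bx')\in\pi(-K')
\;\Longleftrightarrow\; \bx'\in Y_{\mathcal{S},\pi(\mathbf{0})},
\]
using in the first step that $\pi_V(\by)=\bx$ and $\pi_W(\by)\in K$ together say $\by\in\bx+K$, and in the third that $\Z^k=-\Z^k$. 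The sign on $K'$ is cosmetic (the proposition only asks for existence of some $\mathcal{S}$ and $x$), but without it the stated equality fails for a generic non-symmetric window. So: right idea and right computation, but replace the slab definition of $\mathcal{S}$ with the direct translate $\pi(-A^{-1}K)$.
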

\qed


\ignore{
An important net arising from the cut-and-project construction is a
{\em Penrose net}, which is usually defined by placing one point in
each tile of a Penrose tiling, but can also be defined (as proved by
De Bruijn \cite[\S 7]{DeBruijn}) by the above
construction, with $k=5, \, d=2$,
$$V =\left\{\vec{v}: \sum v_j=0, \, \sum v_j
\cos\left( \frac{j \pi }{5}  \right) =0, \, \sum v_j \sin\left
  (\frac{j\pi }{5} \right) =0\right \},$$
$W = V^{\perp}$, and
$$
K = \pi_W \left([0,1]^5 \right).
$$
 (here $v_1, \ldots, v_5$ are the coefficients of $\vec{v}$).

{\comment This is wrong, fix this. }
}
\section{Results of Burago-Kleiner and  Laczkovich, and their
  dynamical interpretation} \name{section: BK}
Let $Y$ be a separated net. The question of whether $Y$ is BL or BD
to a lattice is related to the number of points of $Y$ in large sets
in $\R^d$. More precisely,
fix a positive number $\lambda$, which should be
thought of as the asymptotic density of $Y$, and
for $E \subset \R^d$,
define
$$
\Disc_Y(E, \lambda) \df \Big | \# (Y \cap E) - \lambda |E|  \Big |,
$$
where $|E|$ denotes the $d$-dimensional Lebesgue measure of $E$ (`disc'
stands for {\em discrepancy}).
If $Y$ is a lattice, and $E$ is sufficiently regular (e.g. a large
ball), then one has precise estimates showing that $\Disc_Y(E, \lambda)$ is
small, relative to the measure of $E$.
In this section we present some results which show that for arbitrary
$Y$, bounds on $\Disc_Y(E, \lambda)$ are sufficient to ensure that $Y$ is BL or
BD to a lattice.

For each $\rho \in \N$ and $\lambda>0$, let
$$
D_{Y}(\rho, \lambda) \df \sup_B \frac{\Disc_Y(B, \lambda)}{\lambda|B|},
$$
where the supremum is taken over all cubes $B \subset \R^d$ of the
form
$$B= [a_1\rho, (a_1+1)\rho] \times \cdots \times
[a_d\rho, (a_d+1)\rho],  \ \ \mathrm{with} \  a_1, \ldots, a_d \in \Z.$$

\begin{thm}[Burago-Kleiner]
If there is $\lambda>0$ for which
\eq{eq: BK need to check}{\sum_{\rho} D_Y(2^\rho, \lambda)<
\infty}
then $Y$ is BL to a lattice.
\end{thm}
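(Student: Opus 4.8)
The plan is to follow the rectification method of Burago-Kleiner. Write $\epsilon_\rho\df D_Y(2^\rho,\lambda)$, so the hypothesis is $\sum_\rho\epsilon_\rho<\infty$; by Proposition~\ref{prop: linear maps do not change} we may rescale so that $\lambda=1$ and the target lattice is $\Z^d$. The goal is to build a bi-Lipschitz homeomorphism $\Phi\colon\R^d\to\R^d$ which absorbs all the irregularity of $Y$, in the sense that the deformed net $\Phi(Y)$ has discrepancy $O(\ell^{\,d-1})$ on every cube of side $\ell$. A separated net satisfying such a surface-area discrepancy bound is BD --- hence BL --- to $\Z^d$ by a Hall marriage argument like the one in the proof of Proposition~\ref{prop: linear maps do not change} (join $\mathbf{p}\in\Phi(Y)$ to $\mathbf{q}\in\Z^d$ when $\|\mathbf{p}-\mathbf{q}\|\le r_0$ for a sufficiently large fixed $r_0$, and verify Hall's condition using that the $r_0$-neighborhood of a finite region gains volume dominating its surface area). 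Since $\Phi$ is bi-Lipschitz and $Y$ is separated, $Y$ is BL to $\Phi(Y)$; chaining the equivalences gives $Y$ BL to a lattice.

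The first step is to replace the atomic measure $\sum_{\by\in Y}\delta_{\by}$ by a smooth density. Fix $\rho_1$ so large that $\epsilon_{\rho_1}$ is small; then every dyadic cube $Q$ of side $2^{\rho_1}$ satisfies $\#(Y\cap Q)=2^{\rho_1 d}\bigl(1+O(\epsilon_{\rho_1})\bigr)$, and by mollifying the local density and adding a localized correction inside each such $Q$ one obtains $g\in C^\infty(\R^d)$ with $\tfrac12\le g\le2$ and $\int_Q g=\#(Y\cap Q)$ for every dyadic cube $Q$ of side $2^{\rho_1}$ --- and hence, by additivity, for every dyadic cube of side $\ge 2^{\rho_1}$. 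The correction needed has relative size $O(2^{-\rho_1})$, so $g$ stays in $[\tfrac12,2]$. Consequently the average of $g$ over a side-$2^\rho$ cube is $1+O(\epsilon_\rho)$, while below scale $2^{\rho_1}$ --- finitely many scales --- $g$ is essentially constant; thus $g$ is bounded away from $0$ and $\infty$ and its scale-to-scale oscillation is summable.

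The heart of the matter is the bi-Lipschitz realization of $g$: there is a bi-Lipschitz homeomorphism $\Psi$ of $\R^d$ with $\Psi_*(g\,dx)=dx$, equivalently $|\det D\Psi|=g$ a.e. One constructs $\Psi$ as an infinite composition $\Psi=\lim_n\psi_n\circ\cdots\circ\psi_1$, one factor per dyadic scale $2^\rho$ with $\rho\ge\rho_1$: the map $\psi_n$ fixes the grid of side $2^{\rho_1+n}$ and, inside each such cell, slides the interior coordinate hyperplanes so that each of the $2^d$ child cells ends up carrying $g$-mass equal to its Lebesgue volume up to a factor $1+O(\epsilon_{\rho_1+n-1})$; because $g$ varies little across the cell, these hyperplanes move by at most $O(\epsilon_{\rho_1+n-1})$ times the side, so $\psi_n$ is bi-Lipschitz with constant $1+O(\epsilon_{\rho_1+n-1})$. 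This is the only point where the hypothesis enters, and I expect it to be the main obstacle: one must check that the product $\prod_n\bigl(1+O(\epsilon_{\rho_1+n-1})\bigr)$ converges --- which is precisely $\sum_\rho\epsilon_\rho<\infty$ --- so that the composition converges to a genuinely bi-Lipschitz (not merely H\"older) homeomorphism, and that the ratio ($g$-mass)$/$(volume) tends to $1$ on every dyadic cell, so that $\Psi_*(g\,dx)=dx$ persists in the limit.

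Finally, set $\Phi=\Psi$. For a cube $E$ of side $\ell$, $\#(\Phi(Y)\cap E)=\#(Y\cap\Psi^{-1}(E))$, and $\Psi^{-1}(E)$ is a bi-Lipschitz image of a cube, hence has diameter $\asymp\ell$ and boundary $(d-1)$-measure $O(\ell^{\,d-1})$. Decompose $\Psi^{-1}(E)$ into the maximal dyadic cubes of side $\ge 2^{\rho_1}$ it contains, together with a boundary layer of width $O(2^{\rho_1})$; the exact identities $\int_Q g=\#(Y\cap Q)$ add up over these cubes with no accumulated error, the layer carries $O(\ell^{\,d-1})$ points of $Y$ and $g$-mass $O(\ell^{\,d-1})$, and $\int_{\Psi^{-1}(E)}g=|E|$, so altogether $\bigl|\#(\Phi(Y)\cap E)-|E|\bigr|=O(\ell^{\,d-1})$. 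Feeding this discrepancy bound into the Hall argument of the first paragraph yields a bounded-displacement bijection $\Phi(Y)\to\Z^d$, so $Y$ is BL to $\Phi(Y)$, which is BD and hence BL to $\Z^d$, completing the proof.
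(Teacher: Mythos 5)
The paper does not actually prove this theorem; it cites \cite{BK2} for $d=2$ and \cite{ACG} for general $d$. Your outline does follow the strategy of those references --- construct a smooth density $g$ matching the local counts of $Y$ at dyadic scales $\geq 2^{\rho_1}$, realize $g$ as the Jacobian of a bi-Lipschitz homeomorphism $\Psi$, and then observe that $\Psi(Y)$ has surface-area discrepancy and hence is BD to $\Z^d$ (which is cleanest to conclude by invoking Laczkovich's criterion, Theorem~\ref{thm: Laczkovich2}, rather than re-running a Hall argument from scratch). Your first, second, and fourth paragraphs are essentially right.

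The gap is in your third paragraph, which is where the real content of the theorem lives: that a density bounded away from $0$ and $\infty$ with summable dyadic oscillation is the Jacobian of a bi-Lipschitz homeomorphism (this is Theorem~1.2 of \cite{BK2} in dimension $2$, and the bulk of \cite{ACG} in general). Two concrete problems with your sketch. First, sliding the $d$ interior coordinate hyperplanes of a cube gives only $d$ free parameters, while equalizing the $g$-masses of the $2^d$ children imposes $2^d-1$ independent conditions; for $d\ge 2$ this fails (try $d=2$ with child masses $\tfrac14+\delta,\tfrac14-\delta,\tfrac14-\delta,\tfrac14+\delta$ in a checkerboard --- no pair of line slides fixes it). The actual construction subdivides sequentially --- split perpendicular to $x_1$, then within each half perpendicular to $x_2$, and so on, giving $1+2+\cdots+2^{d-1}=2^d-1$ parameters --- followed by a further bi-Lipschitz correction returning the resulting staircase cells to cubes. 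Second, the infinite composition $\lim_n\psi_n\circ\cdots\circ\psi_1$ as you set it up does not converge: $\psi_n$ acts at scale $2^{\rho_1+n}$ and displaces points by $O(\epsilon_{\rho_1+n-1}\cdot 2^{\rho_1+n})$, which is unbounded in $n$ even when $\sum_\rho\epsilon_\rho<\infty$. Convergence of the product $\prod_n(1+O(\epsilon_{\rho_1+n}))$ controls only the bi-Lipschitz constant, not the pointwise limit. The references instead obtain a uniform bi-Lipschitz bound on each fixed large cube and pass to a limit by compactness/gluing. Both the staircase step and this limiting step are substantial --- which is why the general-$d$ case required the separate paper \cite{ACG}.
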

\begin{proof}
The theorem was proved in case $d=2$ in \cite{BK2}, and in \cite{ACG}
for general $d$.
\end{proof}

Using this we state a dynamical sufficient condition guaranteeing that a dynamical
separated net is BL to a lattice. We will denote the Lebesgue measure
of $B \subset \R^d$ by $|B|$ and write the Lebesgue measure element
as $d\bt$. Let $\mathbf{v}_1, \ldots, \mathbf{v}_d$ be a basis of
$\R^d$ and define $B_T$ via \equ{eq: defn BT}.
Note that $|B_T| = CT^d$ for some $C>0$.
For $W \subset X$ and $x \in X$, denote
$$
N_T(W,x) \df \int_{B_T} \chi_W(\mathbf{t}.x)\, d \mathbf{t},
$$
where $\chi_W$ is the
indicator function of $W$. The asymptotic behavior of such
{\em Birkhoff integrals} as $T \to \infty$ is a well-studied topic in
ergodic theory. The action of $\R^d$ on $X$ is said to
be  {\em uniquely ergodic} if there is a measure $\mu$ on $X$ such
that for any continuous function $f$ on $X$,
and any $x \in X$,
$$\left|\int_{B_T} f(\mathbf{t}.x) \, d\mathbf{t} -|B_T|\, \int_X f \, d\mu \right| = o(|B_T|).$$
 We now show that a
related quantitative estimate implies that certain dynamical nets are
BL to a lattice.
\begin{cor}\name{cor: dynamical for BL}
Suppose $\R^d$ acts on $X$ and $\mathcal{S}$ is a good section for the
action. Let $\mathcal{U}_1$ be a neighborhood of the
identity in $\R^d$ satisfying (i) of \S \ref{subsection: sections},
and let
\eq{eq: defn V}{
W \df \{\bu.x: \bu \in \mathcal{U}_1,\, x \in \mathcal{S} \} \subset X.
}
Suppose there are
positive constants $a, C, \delta$ such that for all $x \in X$ and $T>1$,
\eq{eq: need to check, BL}{
\Big|N_T(W,x) - a|B_T|  \Big| < C\, T^{d-\delta}.
}
Then for any $x \in X$, the net $Y_{\mathcal{S},x} $, as in \equ{eq: visit set}, is BL to a
lattice.
\end{cor}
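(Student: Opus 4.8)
The plan is to deduce the hypothesis \equ{eq: BK need to check} of the Burago--Kleiner theorem from the dynamical discrepancy bound \equ{eq: need to check, BL}. First I would fix the constant $\lambda$ to be the asymptotic density of the net; concretely, recall that for any ${\bf w} \in \R^d$ and any cube $B \subset \R^d$, the number $\#(Y_{\mathcal{S},x} \cap B)$ counts the visit times ${\bf v} \in B$ with ${\bf v}.x \in \mathcal{S}$, and these visit times are ``thickened'' by the neighborhood $\mathcal{U}_1$ so that the indicator of $Y_{\mathcal{S},x} \cap B$ being nonempty near ${\bf v}$ is detected by $\chi_W({\bf v}.x)$. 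Precisely, since property (i) of a good section guarantees that distinct visit times ${\bf v}, {\bf v}'$ have ${\bf v} - {\bf v}' \notin \mathcal{U}_1 - \mathcal{U}_1$, the integral $N_T(W,x) = \int_{B_T} \chi_W({\bf t}.x)\,d{\bf t}$ equals $|\mathcal{U}_1| \cdot \#(Y_{\mathcal{S},x} \cap B_T)$ up to a boundary error of order $T^{d-1}$ coming from visit times within $\mathcal{U}_1$ of $\partial B_T$. More generally the same identity holds with $B_T$ replaced by any cube $B$, with error $O(|\partial B| \cdot \diam(\mathcal{U}_1))$.

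Next I would combine this with \equ{eq: need to check, BL}. Writing $\lambda \df a/|\mathcal{U}_1|$, for a cube $B$ of sidelength $\rho$ we get
\[
\big| \#(Y_{\mathcal{S},x} \cap B) - \lambda|B| \big| \le \frac{1}{|\mathcal{U}_1|}\big|N_T(W, x') - a|B|\big| + O(\rho^{d-1})
\]
for an appropriate base point $x'$ (a translate of $x$ placing $B$ in standard position), and the hypothesis \equ{eq: need to check, BL} bounds the first term by $C' \rho^{d-\delta}$. Hence $\Disc_{Y}(B, \lambda) \le C'' \rho^{d-\delta}$ for every cube $B$ of sidelength $\rho \ge 1$, with $C''$ independent of $B$ and $x$. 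Dividing by $\lambda|B| = \lambda \rho^d$ gives $D_Y(\rho, \lambda) \le C'' (\lambda \rho^\delta)^{-1}$, and therefore
\[
\sum_{\rho} D_Y(2^\rho, \lambda) \le \frac{C''}{\lambda} \sum_{\rho} 2^{-\rho \delta} < \infty.
\]
By the Burago--Kleiner theorem, $Y_{\mathcal{S},x}$ is BL to a lattice. Since the bound was uniform in $x$, this holds for every $x \in X$.

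The main obstacle is the bookkeeping in the first step: one must carefully verify that $N_T(W,x)$ really does count visit times with a controlled multiplicity and controlled boundary error. The ``no double counting'' direction uses property (i) (the sets ${\bf v} + \mathcal{U}_1$ for distinct visit times ${\bf v}$ are essentially disjoint, or at least the map ${\bf v} \mapsto \chi_{{\bf v} + \mathcal{U}_1}$ overlaps boundedly often), and the ``nothing missed'' direction is automatic from the definition of $W$. The boundary term requires knowing that $Y_{\mathcal{S},x}$ is a separated net with parameters $r, R$ uniform in $x$ — which was already established from (i) and (ii) in \S\ref{subsection: sections} — so that the number of visit times within distance $\diam(\mathcal{U}_1)$ of $\partial B$ is $O(\rho^{d-1})$. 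Everything else is a direct substitution into the Burago--Kleiner criterion.
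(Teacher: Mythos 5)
Your proposal is correct and follows essentially the same route as the paper: both relate $N_T(W,x)$ to $|\mathcal{U}_1|\cdot\#(Y_{\mathcal{S},x}\cap B_T)$ via the $\mathcal{U}_1$-thickening of visit times, then feed the resulting discrepancy bound into the Burago--Kleiner criterion. The only cosmetic difference is that the paper sandwiches $\#(Y\cap B_T)$ between $N_{T\pm r}$ for $r=\diam\mathcal{U}_1$ rather than writing the boundary error explicitly as $O(\rho^{d-1})$; these are equivalent, and in both cases one should first replace $\delta$ by $\min(\delta,1)$ so the boundary term is absorbed.
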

\begin{proof}
Let $x \in X,\, Y = Y_{\mathcal{S},x}$ and let $B = x'+B_T \subset \R^d$,
i.e. $B$ is a cube of side
length $2T$, with sides parallel to the coordinate hyperplanes, and
center at $x'$.
We want
to bound $\# \, Y \cap B$ in terms of $N_T(W,x')$.
Let $r$ denote the diameter of $\mathcal{U}_1$, and let
$b = |\mathcal{U}_1|$. If $\by \in Y \cap B$ then $\by.x \in
\mathcal{S}$ and hence $(\by+\bu).x \in W$ for any $\bu \in
\mathcal{U}_1$. This implies that
$$
N_{T+r}(W,x) \geq (\# \, Y \cap B) \, b.
$$
Similarly, if $\chi_W(\by.x)=1$ then there is $\by' \in Y$ with
$\|\by'-\by\|\leq r$, which implies that
$$
N_{T-r}(W,x) \leq (\# \, Y\cap B) \, b.
$$
Applying \equ{eq: need to check, BL} we find that
$$
\frac{a}{b} |B_{T-r}| -\frac{C}{b}(T-r)^{d-\delta} \leq \# \, Y \cap B
\leq \frac{a}{b} |B_{T+r}| +\frac{C}{b} (T+r)^{d-\delta}.
$$
So for any $\delta'<\delta$ there is $T_0$ such that for $T>T_0$,
setting $\lambda = a/b$ gives
$$
\Disc_Y(B_T, \lambda) \leq T^{d-\delta'}.
$$
Since $|B_T| = c T^d$ for some $c>0$, we find that $D_Y(T, \lambda) =
O(T^{-\delta'}).$ From this \equ{eq: BK need to check} follows.
\end{proof}

We now turn to analogous results for the relation BD. Our results in
this regard rely on  work of Laczkovich.
We first introduce some notation. For a measurable $B \subset \R^d$, we
denote by $|B|$ the Lebesgue measure of $B$, by $\partial \, B$
the boundary of $B$, and by $|\partial \, B|_{d-1}$ the
$(d-1)$-dimensional volume of $\partial \, B$.
By a {\em unit cube} (respectively, {\em dyadic cube}) we mean a cube
of the form
$$
[a_1, b_1) \times \cdots \times [a_k, b_k),
$$
where for $i=1, \ldots, k$ we have $a_i \in \Z$ and $b_i -a_i =1$
(respectively, $b_i-a_i = 2^j$ for a non-negative integer $j$
independent of $i$).
\begin{thm}[\cite{Laczkovich}, Theorem 1.1]\name{thm: Laczkovich2}
For a separated net $Y\subset\R^d$, and $\lambda>0$, the following are equivalent:
\begin{enumerate}
\item $Y$ is BD to a lattice of covolume $\lambda^{-1}$.
\item There is $c>0$ such that for every finite
  union of unit cubes $\mc{C}\subset\R^d$,
\begin{equation*}
\Disc_Y(\mc{C}, \lambda)
\le c \, |\partial \, \mc{C}|_{d-1}.
\end{equation*}
\item There is $ c>0$ such that
  for any measurable $A$,
$$
\Disc_Y(A, \lambda) \leq c \, \left|(\partial \, A)^{(1)} \right |,
$$
where $(\partial \, A)^{(1)}$ denotes the set of points whose distance 
from the boundary of $A$ is less than 1.
\end{enumerate}
\end{thm}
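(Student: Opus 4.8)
The statement to prove is Theorem \ref{thm: Laczkovich2}, which is cited directly from Laczkovich's paper, so the ``proof'' here is really a matter of recording how the three conditions are known to be equivalent and pointing to the literature. Nevertheless, let me sketch the structure one would follow.

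\medskip

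\textbf{Proof plan.}
The plan is to prove the cycle of implications $(1) \Rightarrow (3) \Rightarrow (2) \Rightarrow (1)$, since $(3) \Rightarrow (2)$ is immediate (a finite union of unit cubes $\mc C$ has $|(\partial\,\mc C)^{(1)}| \le c_d \,|\partial\,\mc C|_{d-1}$ for a dimensional constant $c_d$, because the $1$-neighbourhood of the boundary of a union of unit cubes is contained in a bounded number of unit-cube-thickened copies of the facets), and only $(2)\Rightarrow(1)$ and $(1)\Rightarrow(3)$ carry content. For $(1)\Rightarrow(3)$: if $f\colon Y\to L$ is a bijection with $\sup_{\by\in Y}\|f(\by)-\by\| = K < \infty$, then for any measurable $A$ we have $\#(Y\cap A)$ and $\#(L\cap A)$ differing by at most $\#\bigl(L\cap (\partial A)^{(K)}\bigr)$, since points of $Y\cap A$ whose image leaves $A$, and points of $L\cap A$ whose preimage lies outside $A$, are all within distance $K$ of $\partial A$; combining this with the elementary lattice-point estimate $\bigl|\#(L\cap A) - \lambda|A|\bigr| = O\bigl(|(\partial A)^{(r_L)}|\bigr)$ (valid for the lattice $L$ of covolume $\lambda^{-1}$, with $r_L$ the diameter of a fundamental domain) and absorbing the $K$- and $r_L$-neighbourhoods into a single constant multiple of $|(\partial A)^{(1)}|$ gives $(3)$.

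\medskip

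The substantive direction is $(2)\Rightarrow(1)$, and here I would invoke Laczkovich's theorem directly rather than reprove it, as indicated by the citation \cite{Laczkovich}, Theorem 1.1 in the statement. Its proof proceeds by a Hall-marriage / flow argument: one builds a bipartite graph on $Y \sqcup L$ joining points at distance $\le$ some fixed $D$, and must verify the (infinite) Hall condition, namely that every finite $A\subset Y$ has at least $\#A$ neighbours in $L$ and symmetrically. The discrepancy bound in $(2)$, applied to the union of unit cubes meeting $A$ (or its complement), controls exactly the imbalance between $\#(Y\cap\mc C)$ and $\lambda|\mc C|$ by $c\,|\partial\mc C|_{d-1}$, and since a finite union of $N$ unit cubes has boundary of size $O(N^{(d-1)/d})$, the deficiency is of lower order than the size of the set, which after a thickening/averaging argument yields the Hall condition with room to spare. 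The measure-matching step uses that $L$ has the right asymptotic density $\lambda$.

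\medskip

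The main obstacle, were one to give full details, is precisely this $(2)\Rightarrow(1)$ argument: extracting a \emph{uniformly bounded} bijection (not merely one with bounded displacement on average) from the per-cube discrepancy bound requires the careful combinatorial argument of Laczkovich, balancing the isoperimetric gain $N^{(d-1)/d}$ against $N$ through a hierarchical decomposition into dyadic cubes. Since this is an external result quoted verbatim, in the paper we simply cite it; the only thing we should verify in passing is the trivial reduction $(3)\Leftrightarrow(2)$ noted above, so that all three formulations may be used interchangeably in what follows.
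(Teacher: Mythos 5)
Your proposal is correct and takes essentially the same approach as the paper, which simply cites Laczkovich's Theorem 1.1 without reproving it. Your sketches of the easy implications $(3)\Rightarrow(2)$ and $(1)\Rightarrow(3)$, and your identification of $(2)\Rightarrow(1)$ as the direction carrying the real content (to be delegated to \cite{Laczkovich}), are all accurate.
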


When applying this result, another result of Laczkovich is very
useful. For sets $\mc{C}, Q_1, \ldots, Q_n$, we say that $\mc{C} \in
S(Q_1, \ldots, Q_n)$ if $\mc{C}$ can be presented using $Q_1, \ldots,
Q_n$ and the operations of disjoint union and proper set difference
($A \sm B$ with $B \subset A$), with
each $Q_i$ appearing at most once. Then we have:
\begin{thm}[\cite{Laczkovich}, Lemma 2.2 and Theorem 1.3] \name{thm: Laczkovich3}
There is a constant $\kappa$, depending only on $d$, such that if
$\mc{C}$ is  a finite union of unit cubes in $\R^d$, then:
\begin{itemize}
\item[(i)]
$\left|(\partial \mc{C})^{(1)} \right| \leq \kappa |\partial \, \mc{C}|_{d-1} $
\item[(ii)]
 there are dyadic cubes $Q_1,
\ldots, Q_n$, such that $\mc{C} \in S(Q_1, \ldots, Q_n)$ and for each
$j$,
\eq{eq: laczko3}{
\# \, \{i: Q_i \mathrm{\ has \ sidelength \ } 2^j \} \leq \kappa \,
\frac{|\partial \, \mathcal{C} |_{d-1}}{2^{j(d-1)}}.
}
\end{itemize}
\end{thm}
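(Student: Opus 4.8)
The plan is to produce the cubes $Q_i$ by a single top-down ("quadtree") recursion on $\mathcal{C}$, using the set-difference operation to represent a dyadic cube that $\mathcal{C}$ \emph{almost} fills without having to pay for it at a small scale. Write $M \df |\partial\mathcal{C}|_{d-1}$ for the number of unit lattice $(d-1)$-faces making up $\partial\mathcal{C}$, and fix one dyadic cube $Q_0\supseteq\mathcal{C}$ of sidelength $2^{J}$ with $2^{J}\lesssim_d \operatorname{diam}(\mathcal{C})$; all cubes arising in the recursion will be aligned to the grid determined by $Q_0$, so within the recursion the dyadic cubes of a fixed sidelength $2^m$ tile $\R^d$. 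For a cube $Q$ in this family set $b_Q\df |\partial\mathcal{C}\cap\operatorname{int}(Q)|_{d-1}$; then each face of $\partial\mathcal{C}$ lies in $\operatorname{int}(Q)$ for at most two cubes $Q$ of sidelength $2^m$ (those separated by the face, if it lies on the grid), so $\sum_{|Q|=2^m} b_Q\le 2M$ for every $m$.

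The recursion. Given a grid cube $Q$ of sidelength $2^m$, represent $\mathcal{C}\cap Q$ in $S(\cdots)$ as follows. If $Q\subseteq\mathcal{C}$, output the single cube $Q$; if $|Q\cap\mathcal{C}|=0$, output $\varnothing$. Otherwise $Q$ is \emph{mixed}, and we fix a dimensional threshold $\delta_d>0$ and branch: if $b_Q\ge\delta_d\,2^{m(d-1)}$, split $Q$ into its $2^d$ children, recurse on each, and take the disjoint union; if $b_Q<\delta_d\,2^{m(d-1)}$, then by the relative isoperimetric inequality in a cube (which for a union of unit cubes has an elementary Loomis--Whitney proof), with $\delta_d$ small enough one of $\mathcal{C}\cap Q$, $Q\setminus\mathcal{C}$ has volume at most $\tfrac1{10}\,2^{md}$ --- write $\mathcal{C}\cap Q$ as $Q\setminus R$ or as $R$ accordingly, emit the cube $Q$ in the former case, and recurse \emph{into the small set} ($R$, resp.\ $R$) using the plain quadtree only (never the set-difference shortcut, so as not to cycle). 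The recursion bottoms out at unit cubes; every cube is emitted once and only disjoint union and set difference are used, so the output lies in $S(Q_1,\dots,Q_n)$ and equals $\mathcal{C}$.

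The accounting at sidelength $2^j$ has three sources. (a) Cubes emitted because $Q\subseteq\mathcal{C}$: their parents (sidelength $2^{j+1}$) were split, hence had $b\ge\delta_d 2^{(j+1)(d-1)}$, so by the face-counting bound there are $\lesssim_d M/2^{j(d-1)}$ such parents and hence $\lesssim_d M/2^{j(d-1)}$ such cubes. (b) Cubes emitted by the shortcut at sidelength $2^j$: these are pairwise disjoint, and a shortcut is reached at scale $2^j$ only as a child of a split cube at scale $2^{j+1}$, so again there are $\lesssim_d M/2^{j(d-1)}$ of them. (c) Cubes emitted while quadtree-decomposing a small remainder $R$ that sits inside a scale-$2^j$ shortcut cube: such $R$ lies within distance $O(2^j)$ of $\partial\mathcal{C}$ and has $|\partial R|_{d-1}\lesssim_d 2^{j(d-1)}$, and note that the ``always split'' rule means no further shortcuts are nested inside it, so the whole shortcut structure has depth one. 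Summing the geometric contributions of (a) and (b) over the scales, and bounding the total over all remainder passes in (c), yields \equ{eq: laczko3} with $\kappa=\kappa(d)$.

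The step I expect to be the main obstacle is (c): bounding, uniformly in $d$, the total cost of the plain quadtree decompositions of the small remainders. Passing from $\mathcal{C}\cap Q$ to $Q\setminus\mathcal{C}$ inside $Q$ creates artificial boundary lying on $\partial Q$, and the quadtree of a thin remainder $R$ can produce many unit (and small) cubes; one must run a charging argument showing that these counts, summed over all shortcut cubes, are paid for --- with bounded multiplicity --- by the genuine faces of $\partial\mathcal{C}$ that the remainders abut, using crucially that $b_Q<\delta_d 2^{m(d-1)}$ at a shortcut forces the remainder to be a degenerate, boundary-hugging piece (a thin slab or a scattered collection) rather than a fat blob. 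The cleanest way to organize this, and the route I would take first, is induction on $d$: the case $d=1$ is the elementary fact that an interval with integer endpoints is a disjoint union of $O(1)$ dyadic intervals of each length, and a thin remainder inside a scale-$2^j$ cube is, after slicing along its degenerate directions, a bounded number of products of a $(d-1)$-dimensional such region with a dyadic interval, to which the inductive hypothesis applies --- the set-difference operation being exactly what lets one turn such products (which are boxes) back into honest cubes.
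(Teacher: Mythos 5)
This statement is quoted from Laczkovich and the paper does not reprove it, so I can only assess your argument on its own terms.

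Your central idea --- use set difference when a dyadic cube is nearly filled, triggered by a threshold on the boundary mass $b_Q=|\partial\mathcal{C}\cap\operatorname{int}(Q)|_{d-1}$ --- is the right one, and is essential: a plain maximal-dyadic-subcube decomposition already fails the bound \equ{eq: laczko3} for a large cube with scattered unit holes at density $2^{-J}$, where the split tree produces $\sim 2^{J(d-1)}$ cubes at \emph{every} scale $2^{j}$ for $j\lesssim J/d$. Your parts (a) and (b) are also sound: a cube emitted because it lies inside $\mathcal{C}$, or emitted by a shortcut, is a child of a split cube whose $b$-value is at least $\delta_d 2^{(j+1)(d-1)}$, and $\sum_Q b_Q\le 2M$ at each scale controls the count. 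And your bound $|\partial R|_{d-1}\lesssim_d b_Q$ on the remainder's full boundary (including the artificial part on $\partial Q$) can in fact be proved: a column argument in each coordinate direction gives $|\partial R\cap\partial Q|\le 2d\,(b_Q+|R|/2^m)$, and the relative isoperimetric inequality together with $b_Q<\delta_d 2^{m(d-1)}$ forces $|R|/2^m\lesssim_d b_Q$.

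The gap is exactly where you flagged it, in (c), and I believe it is fatal as written. The plain quadtree of a set $R$ inside a cube does \emph{not} produce $O(|\partial R|_{d-1}/2^{j'(d-1)})$ cubes at scale $2^{j'}$; the honest a priori bound is only $O(|\partial R|_{d-1})$ per scale, because a parent at scale $2^{j'+1}$ can have a child fully inside $R$ while carrying only $O(1)$ of $\partial R$. Concretely, take $R=Q\setminus\mathcal{C}$ to be a disjoint union of $N$ dyadic cubes $C_i$ of sidelength $2^p$, each of which is \emph{itself} a ``big cube minus $\sim2^{p(d-1)}$ scattered unit holes''. Then $R$ is small ($|R|\le 2^{md}/10$), $b_Q\sim N2^{p(d-1)}<\delta_d2^{m(d-1)}$, so $Q$ is a shortcut cube; but the plain quadtree of $R$ produces $\sim N2^{p(d-1)}$ cubes at every scale $2^{j'}$ with $0\le j'\lesssim p/d$, while the allowed budget at scale $2^{j'}$ is $\kappa M/2^{j'(d-1)}\sim 2^{m(d-1)}/2^{j'(d-1)}$, which is smaller by a factor $\sim 2^{j'(d-1)}/\delta_d$ --- unbounded as $p\to\infty$. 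The remainder here is neither thin nor boundary-hugging; it is a scattered family of fat blobs which themselves contain sparse holes, so neither the charging heuristic (``paid for by genuine faces the remainders abut'') nor the proposed slicing-and-induction-on-$d$ applies. What rescues the construction is that the decomposition of $R$ must itself use the shortcut (indeed, on the interior of $Q$ one has $\partial R\cap\operatorname{int}(Q')=\partial\mathcal{C}\cap\operatorname{int}(Q')$, so the two recursions run identically except near $\partial Q$, and the alternation $A\mapsto Q'\setminus A$ cannot loop since the smaller side strictly shrinks). Replacing your ``plain quadtree only'' rule by a full recursive shortcut, and accounting for the $O(1)$ extra boundary layers near each shortcut cube's own $\partial Q$, is where the real work lies; as it stands the argument does not establish \equ{eq: laczko3}.
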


\begin{cor}\name{cor: dynamical for BDD}
Suppose $\R^d$ acts on $X$ and $\mathcal{S}$ is a good section for the
action. Let $\mathcal{U}_1$ be a neighborhood of
identity in $\R^d$ satisfying (i) of \S \ref{subsection: sections},
and let $W$ be as in \equ{eq: defn V}.
Suppose there are
positive constants $a, C, \delta$ such that for all $x \in X$ and $T>1$,
\eq{eq: need to check, BDD}{
\Big|N_T(W,x) - a|B_T|  \Big| < C\, T^{d-1-\delta}.
}
Then for any $x \in X$, the net $Y_{\mathcal{S},x} $ as in \equ{eq: visit set} is BD to a
lattice.
\end{cor}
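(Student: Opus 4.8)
The plan is to verify condition~(2) of Laczkovich's Theorem \ref{thm: Laczkovich2} for the net $Y = Y_{\mathcal{S},x}$ with density $\lambda = a/b$, where $b = |\mathcal{U}_1|$; this condition then plays exactly the role that the Burago--Kleiner theorem played in Corollary \ref{cor: dynamical for BL}, except that one must now produce the sharper bound $\Disc_Y(\mathcal{C},\lambda) \le c\,|\partial\,\mathcal{C}|_{d-1}$ for every finite union of unit cubes $\mathcal{C}$. The first step is the same identification as in that corollary: since $\R^d$ acts by a group action, for $\bt\in\R^d$ one has $\bt.x\in W$ (with $W$ as in \equ{eq: defn V}) iff $(\bt-\bu).x\in\mathcal{S}$ for some $\bu\in\mathcal{U}_1$, i.e. iff $\bt\in\by+\mathcal{U}_1$ for some $\by\in Y$; hence $\{\bt:\chi_W(\bt.x)=1\}=\bigcup_{\by\in Y}(\by+\mathcal{U}_1)$. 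As in the proof of Corollary \ref{cor: dynamical for BL}, and automatically for the good sections furnished by Propositions \ref{prop: minimal} and \ref{prop: sections exist}, we may take $\mathcal{U}_1$ so that these translates are pairwise disjoint, so that, writing $N_E(W,x)\df\int_E\chi_W(\bt.x)\,d\bt$ for measurable $E\subset\R^d$,
\[
N_E(W,x) \;=\; \sum_{\by\in Y}\bigl|E\cap(\by+\mathcal{U}_1)\bigr|.
\]

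Step one is to compare the counting function of $Y$ with $N$. Fix a finite union of unit cubes $\mathcal{C}$ and put $r=\diam\mathcal{U}_1$. A point $\by\in Y$ with $\by+\mathcal{U}_1\subset\mathcal{C}$ contributes $b$ to each of $b\,\#(Y\cap\mathcal{C})$ and $N_{\mathcal{C}}(W,x)$, and a point with $(\by+\mathcal{U}_1)\cap\mathcal{C}=\varnothing$ contributes $0$ to each; every other $\by$ lies within $r$ of $\partial\,\mathcal{C}$ and changes the two totals by at most $b$ each, so
\[
\bigl|\,b\,\#(Y\cap\mathcal{C}) - N_{\mathcal{C}}(W,x)\,\bigr| \;\le\; b\cdot\#\{\by\in Y:\dist(\by,\partial\,\mathcal{C})\le r\}.
\]
Since $Y$ is a separated net its local density is bounded, and since $\mathcal{C}$ is a finite union of unit cubes the $r$-neighbourhood of $\partial\,\mathcal{C}$ has volume at most $(1+2r)^d\,|\partial\,\mathcal{C}|_{d-1}$; hence the right-hand side is $\le C_1\,|\partial\,\mathcal{C}|_{d-1}$ with $C_1$ depending only on $r$, $d$, and the separation constants of $Y$.

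Step two is to compare $N_{\mathcal{C}}(W,x)$ with $a|\mathcal{C}|$ using Laczkovich's decomposition, Theorem \ref{thm: Laczkovich3}: write $\mathcal{C}\in S(Q_1,\dots,Q_n)$ with dyadic cubes $Q_i$ obeying \equ{eq: laczko3}. Since $\mathcal{C}$ is built from the $Q_i$ by disjoint unions and set differences, $\chi_{\mathcal{C}}=\sum_i\varepsilon_i\chi_{Q_i}$ for signs $\varepsilon_i\in\{\pm1\}$, so $N_{\mathcal{C}}(W,x)=\sum_i\varepsilon_i N_{Q_i}(W,x)$ and $a|\mathcal{C}|=\sum_i\varepsilon_i a|Q_i|$. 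A dyadic cube of sidelength $2^j$ is, after translating, the cube $B_{2^{j-1}}$, so $N_{Q_i}(W,x)=N_{2^{j_i-1}}(W,x_i)$ for a suitable translate $x_i$ of $x$; applying \equ{eq: need to check, BDD} at these base points for large $j_i$, and the trivial estimate $|N_{Q_i}(W,x)-a|Q_i||\le\max(1,a)|Q_i|$ for the boundedly many smallest scales, yields $|N_{Q_i}(W,x)-a|Q_i||\le C'\,2^{j_i(d-1-\delta)}$ for a uniform $C'$. Summing and inserting \equ{eq: laczko3},
\[
\bigl|\,N_{\mathcal{C}}(W,x)-a|\mathcal{C}|\,\bigr| \;\le\; \sum_i\bigl|N_{Q_i}(W,x)-a|Q_i|\bigr| \;\le\; C'\kappa\,|\partial\,\mathcal{C}|_{d-1}\sum_{j\ge0}2^{-j\delta} \;=\; C_2\,|\partial\,\mathcal{C}|_{d-1},
\]
the geometric series converging precisely because $\delta>0$. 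Combining the two steps, $\Disc_Y(\mathcal{C},a/b)\le\tfrac1b(C_1+C_2)\,|\partial\,\mathcal{C}|_{d-1}$ for every finite union of unit cubes; thus condition~(2) of Theorem \ref{thm: Laczkovich2} holds and $Y$ is BD to a lattice of covolume $b/a$. As \equ{eq: need to check, BDD} is assumed for all $x\in X$, the argument applies verbatim to every base point.

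The main obstacle is step one: one must pin $\#(Y\cap\mathcal{C})$ down to $N_{\mathcal{C}}(W,x)$ at the scale $|\partial\,\mathcal{C}|_{d-1}$, not merely at the scale $|\mathcal{C}|$, and this is exactly where the BD argument departs from the BL one. A per-cube estimate $\Disc_Y(Q_i,\lambda)=O(\mathrm{sidelength}^{\,d-1})$ — all that the inclusion argument of Corollary \ref{cor: dynamical for BL} gives, the boundary term $|B_{T+r}|-|B_T|$ being itself of order $T^{d-1}$ — would, after summation against \equ{eq: laczko3}, cost a factor equal to the number of distinct dyadic scales present in $\mathcal{C}$, which is unbounded. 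The exact identity of the first paragraph circumvents this by confining the entire discrepancy between counting and the measure $\tfrac1b\chi_W(\,\cdot\,.x)\,d\bt$ to the boundary corridor of $\mathcal{C}$, and the power saving $T^{d-1-\delta}$ in \equ{eq: need to check, BDD}, however small $\delta$ may be, is precisely what renders the remaining geometric series summable. The leftover ingredients — the volume bound for the fattened boundary and the sign bookkeeping — are routine, and no Diophantine or harmonic-analytic input beyond \equ{eq: need to check, BDD} is required.
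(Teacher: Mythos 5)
Your proof is correct, and it takes a route that is not only different from the paper's but more careful. The paper deduces a per-cube bound $\Disc_Y(Q,\lambda)\le C'T^{d-1-\delta'}$ with $\delta'>0$ (equation \equ{eq: individual disc}) ``arguing as in the proof of Corollary~\ref{cor: dynamical for BL},'' and then sums against \equ{eq: laczko3}. But, as you observe in your closing paragraph, that fattening/shrinking argument only gives $N_{T-r}\le b\,\#(Y\cap Q)\le N_{T+r}$, and after inserting \equ{eq: need to check, BDD} the dominant error is the boundary correction $a\bigl(|B_{T+r}|-|B_T|\bigr)=\Theta(T^{d-1})$, not $O(T^{d-1-\delta'})$; indeed no per-cube bound with a positive power saving can hold, since already $\Disc_{\Z^d}(Q,1)=\Theta(T^{d-1})$ for cubes $Q$ of non-integral sidelength $T$. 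Summing $\Theta(\mathrm{sidelength}^{d-1})$ against \equ{eq: laczko3} then produces $|\partial\,\mathcal{C}|_{d-1}$ times the number of dyadic scales present in $\mathcal{C}$, which is unbounded, so the geometric series in the paper's argument does not converge as written.

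Your two-step decomposition avoids this. Step one compares $b\,\#(Y\cap\mathcal{C})$ with the integral $N_{\mathcal{C}}(W,x)$ once, at the level of the whole set $\mathcal{C}$, via the exact identity $N_E(W,x)=\sum_{\by\in Y}\bigl|E\cap(\by+\mathcal{U}_1)\bigr|$; this is valid once $\mathcal{U}_1$ is chosen so that the translates $\by+\mathcal{U}_1$ ($\by\in Y$) are pairwise disjoint, which holds for instance when $\mathcal{U}_1-\mathcal{U}_1\subset U$ as in the construction of Proposition~\ref{prop: minimal}, or when $\mathcal{U}_1$ is symmetric, and can always be arranged by shrinking $\mathcal{U}_1$. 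That identity confines the counting-versus-integral discrepancy to the $r$-corridor of $\partial\,\mathcal{C}$, giving $O(|\partial\,\mathcal{C}|_{d-1})$ outright. Step two applies the Laczkovich decomposition to $N_{\mathcal{C}}(W,x)=\sum_i\varepsilon_iN_{Q_i}(W,x)$ rather than to the count, and here the hypothesis \equ{eq: need to check, BDD}, applied at translated base points, supplies $\bigl|N_{Q_i}(W,x)-a|Q_i|\bigr|=O(2^{j_i(d-1-\delta)})$ per dyadic cube, which sums against \equ{eq: laczko3} to $O(|\partial\,\mathcal{C}|_{d-1})$ precisely because $\delta>0$. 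The $\Theta(T^{d-1})$ boundary error is thus paid once rather than once per scale, which is exactly the extra care the Laczkovich machinery requires; the remaining ingredients in your write-up (the fattened-boundary volume bound, the $\pm 1$ sign bookkeeping, and the handling of the finitely many smallest scales) are routine and correctly executed.
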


\begin{proof}
Let $W$ be as in \equ{eq: defn V}, where we assume with no loss of
generality that the diameter of $\mathcal{U}_1$ is smaller than 1. Let $b = |\mathcal{U}_1|$ and let $\lambda = a/b$.
Let $\mc{C}$ be a finite union of unit cubes, let
$\mc{C}^{(0)}$ denote the set of points of $\mc{C}$ whose
distance from $\partial \, \mc{C}$ is at least 1, and let
$\mc{C}^{(1)} $ denote the 1-neighborhood of $\mc{C}$. Then
$(\partial \, \mc{C})^{(1)} = \mc{C}^{(1)} \sm
\mc{C}^{(0)}$ and 
according to Theorem \ref{thm: Laczkovich3}(i),
\eq{eq: one estimate}{\left|\mc{C} ^{(1)} \right| - \left| 
\mc{C}^{(0)} \right|  = O\left(|\partial \, \mc{C} |_{d-1} \right)
}
 (where the
implicit constant depends only on the dimension $d$). 
Arguing as in the proof of Corollary \ref{cor: dynamical for BL}, 
we find that $$
\left|\#(\mc{C} \cap Y) b- a\int_{\mc{C}} \chi_W(\mathbf{t}.x) \,
d\mathbf{t} \right| \leq a\left(\left|\mc{C}^{(1)} \right| - \left| 
\mc{C}^{(0)} \right|\right), 
$$
and hence, in light of \equ{eq: one estimate},
$$
\Disc_Y(\mathcal{C}, \lambda) = \left| \int_{\mc{C}}
  \chi_W(\mathbf{t}.x)d\mathbf{t} - \lambda |\mc{C}| \right| + O(|\partial \,
\mc{C}|_{d-1}). 
$$
Thus, in light of condition (2) of Theorem \ref{thm:
  Laczkovich2}, it suffices to show that 
$$
 \left| \int_{\mc{C}}
  \chi_W(\mathbf{t}.x) \, d\mathbf{t} - \lambda |\mc{C}| \right| =  O(|\partial \,
\mc{C}|_{d-1}). 
$$
 Let $Q_1, \ldots, Q_n$ be a finite collection of cubes as in Theorem \ref{thm:
  Laczkovich3}(ii), so that we may write $\mc{C}$ using finitely many
operations of finite disjoint union and proper difference starting from the sets $Q_i$. Let $T_i$
be the sidelength of $Q_i$. Then for some choices of $\vre_i = \pm 1,
\, i=1, \ldots, n$, 
\[
\begin{split}
 & \left| \int_{\mc{C}}
  \chi_W(\mathbf{t}.x)  \, d\mathbf{t}  - \lambda |\mc{C}| \right| 
\\   =&  \left|
  \sum_{i=1}^n \vre_i \left(
\int_{Q_i}
  \chi_W(\mathbf{t}.x)d\mathbf{t}    - \lambda \, 
    |Q_i|  \right) 
\right| \\ 
\leq & \sum_{i=1}^n \left| \int_{Q_i}
  \chi_W(\mathbf{t}.x)d\mathbf{t}   -  \lambda |Q_i|\right |  
 \stackrel{\equ{eq: need to check, BDD}}{=} 
O \left(\sum_{i=1}^n
  T_i^{d-1-\delta} \right) \\
\stackrel{\equ{eq: laczko3}}{=} & O\left (\sum_{j}
\frac{(2^j)^{d-1-\delta}}{2^{j(d-1)}} \, |\partial \, \mc{C}|_{d-1} \right) =  O(|\partial \, \mc{C}|_{d-1}),
\end{split}
\]
as required. 
\end{proof}

\section{Minkowski dimension and approximation}\name{section:
  prelims}
Let $A \subset \R^k$ be bounded and let $r>0$. We denote by $N(A,r)$
the minimal number of balls of radius $r$ needed to cover $A$, and
$$
\dim_M A  \df \limsup_{r \to 0} \frac{\log N(A,r)}{-\log r}.
$$
Equivalently (see e.g. \cite[Chap. 3]{Falconer}), for $r>0$ let
$\mathcal{B}$ be the collection of boxes $[a_1, a_1 +r] \times \cdots
\times [a_k, a_k+r]$ where the $a_i$ are integer multiples of $r$, and
let $S(A, r)$ denote the number of elements of $\mathcal{B}$ which
intersect $A$. Then
$$
\dim_M A = \limsup_{r \to 0} \frac{\log S(A,r)}{-\log r}.
$$
It is clear from the definition that if $F$ is a bilipschitz map, then $\dim_M \, A =
\dim_M \, F(A)$. 

From Theorem \ref{thm: equidistribution for BL} we derive:
\begin{cor}\name{cor: for main, BL}
Let $\mathbf{v}_1, \ldots, \mathbf{v}_d \in \R^k$ be such that
$\spa \left(\mathbf{v}_1,\ldots ,\mathbf{v}_d\right)$ is Diophantine,
and suppose
$U$ is a closed set in $\T^k$, such that
$\dim_M \, \partial U < k$. Then
there are constants $C$ and $\delta>0$ such that for any
${\bf x}\in\R^k$ and any $T>1$,
\begin{equation*}
\Big|N_T(U,{\bf x})-|U||B_T|\Big|\leq C\, T^{d-\delta}.
\end{equation*}
\end{cor}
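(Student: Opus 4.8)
The plan is to deduce Corollary \ref{cor: for main, BL} from Theorem \ref{thm: equidistribution for BL} by a sandwiching argument: approximate the indicator function $\chi_U$ from above and below by finite unions of small aligned boxes, apply the theorem's estimate \equ{eq: equidistribution} to each box, and control the accumulated error using the Minkowski-dimension hypothesis $\dim_M \partial U < k$.

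First I would fix a small scale $r>0$ (to be optimized at the end as a power of $1/T$) and let $\mathcal{B}_r$ be the grid of closed boxes $[a_1 r, (a_1+1)r]\times\cdots\times[a_k r,(a_k+1)r]$ with $a_i\in\Z$, pushed down to $\T^k$ via $\pi$. Partition these into the boxes $\mathcal{B}_r^{\mathrm{int}}$ lying entirely in the interior of $U$, and the boxes $\mathcal{B}_r^{\partial}$ meeting $\partial U$. Set $U_r^- = \bigcup \mathcal{B}_r^{\mathrm{int}}$ and $U_r^+ = U_r^- \cup \bigcup \mathcal{B}_r^{\partial}$, so that $U_r^-\subset U\subset U_r^+$ and hence $N_T(U_r^-,\bx)\le N_T(U,\bx)\le N_T(U_r^+,\bx)$, and likewise $|U_r^-|\le|U|\le|U_r^+|$. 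Each of $U_r^\pm$ is a disjoint union of at most $S(\T^k,r)+S(\partial U,r) = O(r^{-k})$ aligned boxes (note each individual box has sidelength $r<1$, so $\pi$ is injective on it provided $r$ is small, and the box is aligned). Applying Theorem \ref{thm: equidistribution for BL} to each constituent box and summing gives
\[
\Big|N_T(U_r^\pm,\bx) - |U_r^\pm|\,|B_T|\Big| \le \#\{\text{boxes}\}\cdot C\,T^{d-\delta} = O\!\left(r^{-k}\,T^{d-\delta}\right).
\]

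Next I would bound the gap between $U_r^+$ and $U_r^-$: since their symmetric difference is covered by the boundary boxes, the definition of upper Minkowski dimension gives, for any $\epsilon>0$ with $\dim_M\partial U < k-2\epsilon$ say, a bound $S(\partial U, r)\le r^{-(k-2\epsilon)}$ for all sufficiently small $r$, hence $|U_r^+|-|U_r^-|\le r^k\cdot S(\partial U,r)\le r^{2\epsilon}$. Combining the two estimates, for $\bx\in\R^k$ and $T>1$,
\[
\Big|N_T(U,\bx) - |U|\,|B_T|\Big| \le O\!\left(r^{-k}T^{d-\delta}\right) + |B_T|\cdot r^{2\epsilon} = O\!\left(r^{-k}T^{d-\delta} + r^{2\epsilon}T^{d}\right),
\]
using $|B_T| = O(T^d)$. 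Now I optimize: choosing $r = T^{-\beta}$ balances the two terms when $k\beta - \delta = -2\epsilon\beta$, i.e. $\beta = \delta/(k+2\epsilon)$, yielding a bound $O(T^{d - \delta'})$ with $\delta' = 2\epsilon\delta/(k+2\epsilon) > 0$. This is the required estimate, with $C,\delta'$ depending only on $k,d$, the Diophantine constants of $\spa(\vbf_1,\ldots,\vbf_d)$, and the implied constant in the Minkowski-dimension bound for $\partial U$.

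The main obstacle I anticipate is purely bookkeeping rather than conceptual: one must be careful that the individual grid boxes really do satisfy the hypotheses of Theorem \ref{thm: equidistribution for BL} — in particular that they are \emph{aligned} boxes on which $\pi$ is injective, which forces $r<1$ and is harmless since we send $r\to 0$ — and that the counting $\#\mathcal{B}_r^{\mathrm{int}} + \#\mathcal{B}_r^{\partial} = O(r^{-k})$ uniformly, together with the slightly more delicate bound $\#\mathcal{B}_r^{\partial} = S(\partial U, r) = O(r^{-(k-2\epsilon)})$, which is exactly where the hypothesis $\dim_M\partial U < k$ (as opposed to $\le k$) is used. One should also check that the constant $C$ from Theorem \ref{thm: equidistribution for BL} is the same for all these boxes: inspecting its statement, $C$ depends only on the Diophantine data and is uniform over aligned boxes, so summing $O(r^{-k})$ copies is legitimate. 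No step requires a fundamentally new idea beyond the approximation-by-boxes scheme already signposted in the paper's \S \ref{section: prelims}.
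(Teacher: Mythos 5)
Your proof is correct and follows essentially the same route as the paper's: approximate $U$ from inside and outside by unions of grid-aligned boxes of sidelength $1/K$ (your $r$), apply Theorem \ref{thm: equidistribution for BL} to each constituent box and sum, bound the volume gap using $\dim_M \partial U < k$, and then optimize $K$ as a power of $T$. The only cosmetic difference is that you phrase the Minkowski bound with a factor $2\epsilon$ instead of $\epsilon$, which just rescales the exponent $\delta'$; the decomposition, the triangle-inequality bookkeeping, and the choice $K\asymp T^{\delta/(k+\epsilon)}$ are the same as in the paper.
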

\begin{proof}[Proof (assuming Theorem
  \ref{thm: equidistribution for BL})]
Let $K$ be a positive integer and for each ${\bf m}\in\Z^k$ let
\[C(\mathbf{m})= \left[\frac{m_1}{K},\frac{m_1+1}{K}\right]
\times\cdots\times \left[\frac{m_k}{K},\frac{m_k+1}{K}\right]. \]
Define $A_1,A_2\subset\R^k$ by
\begin{align*}
A_1=\bigcup_{\substack{{\bf m}\in\Z^k\\ C({\bf m})\subset U}}C({\bf
  m})\quad\text{and}\quad A_2=\bigcup_{\substack{{\bf m}\in\Z^k\\
    C({\bf m})\cap U\not=\emptyset}}C({\bf m}).
\end{align*}
Clearly $N_T(A_1,{\bf x})\le N_T(U,{\bf x})\le N_T(A_2,{\bf x}),$ so that
\begin{align}\label{NTestimate1}
\Big|N_T(U,{\bf x})-|U||B_T|\Big|\le\max_{i=1,2}
\Big|N_T(A_i,\mathbf{x})-|U||B_T| \Big|.
\end{align}
Now by the triangle inequality
\begin{align}\label{NTestimate2}
\Big|N_T(A_1,{\bf x})-|U||B_T|\Big|\le \Big|N_T(A_1,{\bf
  x})-|A_1||B_T|\Big|+|B_T|\Big||A_1|-|U|\Big|.
\end{align}
The number of ${\bf m}\in\Z^k$ with $C({\bf m})\subset U$ is bounded
above by a constant times $M^k$, so applying Theorem \ref{thm: equidistribution for BL} to
each of the aligned boxes $C(\mathbf{m})$ gives
\[\Big|N_T(A_1,{\bf x})-|A_1||B_T|\Big|\leq c_1 T^{d-\delta_0}K^k,\]
where $c_1$ and $\delta_0$ are positive constants that are independent of
$K$. Now our
hypothesis on the dimension of the boundary
guarantees that there is an $\vre >0$ such that the number of
${\bf m}\in\Z^k$ for which $C({\bf m})$ intersects $\partial U$ is
bounded above by a constant times $K^{k-\vre}$. Each of these
boxes has volume $K^{-k}$ and thus we have that
\begin{align*}
|B_T|\Big||A_1|-|U|\Big| \leq c_2 \, |B_T|\frac{K^{k-\vre}}{K^k} \leq
c_3 \, T^dK^{-\vre},
\end{align*}
with $0<c_2<c_3$ independent of $K$.
Now we return to (\ref{NTestimate2}) and set $K=\lfloor
T^{\delta_0/(k+\vre)} \rfloor$ to obtain the bound
\[\Big|N_T(A_1,{\bf x})-|U||B_T|\Big|~\leq c_1
T^{d-\delta_0}K^k+c_3 T^dK^{-\vre} \leq (c_1+c_3)
T^{d-\delta_0 \vre/(k+\vre)}. \]
Setting $\displaystyle{C=c_1+c_3, \, \delta = \frac{\delta_0 \vre}{k+\vre}}$ and applying the same
argument to $A_2$ finishes the proof via (\ref{NTestimate1}).
\end{proof}

We now give a similar argument for bounded displacement.

\begin{cor}\name{cor: for main, BDD}
Suppose $d>(k+1)/2$ and $\mathcal{T} = (\mathbf{v}_1, \ldots, \mathbf{v}_k)$ is a basis of $\R^k$
satisfying the conditions of Theorem
\ref{thm: equidistribution for BDD}. 
Let
$\mathcal{S}$ be a good section lying in a translate of $\spa
(\mathbf{v}_{d+1},\ldots ,\mathbf{v}_k)$, which is closed in this affine
subspace, and satisfies
$\dim_M \partial \mathcal{S}=k-d-1$. Then we can choose
$\mathcal{U}_1$ satisfying (i) of \S \ref{subsection: sections} so
that, for the set $W$ defined as in \equ{eq: defn V}, there are
constants $C$ and $\delta>0$ such that for any ${\bf x}\in\R^k$ and
any $T>1$,
\begin{equation*}
\Big|N_T(W,{\bf x})-|W||B_T|\Big|\leq C\, T^{d-1-\delta}.
\end{equation*}
\end{cor}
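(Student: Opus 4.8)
The plan is to run the argument of the proof of Corollary~\ref{cor: for main, BL}, but with Theorem~\ref{thm: equidistribution for BDD} in place of Theorem~\ref{thm: equidistribution for BL}, exploiting the freedom in the choice of $\mathcal{U}_1$ to arrange that every set to which we apply the equidistribution estimate is a parallelotope aligned with $\mathcal{T}$. Write $\R^k = V\oplus W'$, where $V = \spa(\mathbf{v}_1,\dots,\mathbf{v}_d)$ and $W' = \spa(\mathbf{v}_{d+1},\dots,\mathbf{v}_k)$; since the hypotheses of Theorem~\ref{thm: equidistribution for BDD} force $\mathbf{v}_{d+1},\dots,\mathbf{v}_k$ to be distinct standard basis vectors, $W'$ is a coordinate subspace. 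Being a (linear) section lying in a translate of $W'$, we have $\mathcal{S} = \pi(\mathcal{S}')$, where $\mathcal{S}'$ is a bounded subset of a translate of $W'$ with $\dim_M\partial\mathcal{S}' = k-d-1$ and with $\pi$ injective on $\overline{\mathcal{S}'}$. First I would take $\mathcal{U}_1$ (identified with a subset of $V$) to be a box $\{\sum_{i=1}^d a_i\mathbf{v}_i : |a_i|\le\varepsilon_0\}$, with $\varepsilon_0$ fixed small enough that $\mathcal{U}_1$ satisfies (i) of \S\ref{subsection: sections} (any neighbourhood smaller than one furnished by the good-section property still satisfies (i)) and that $\pi$ is injective on a neighbourhood of $\overline{\mathcal{U}_1+\mathcal{S}'}$. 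Then $W = \pi(\mathcal{U}_1+\mathcal{S}')$ is the $\pi$-image of the cylinder over $\mathcal{S}'$ in the $V$-directions, and $|W| = |\mathcal{U}_1+\mathcal{S}'|$.

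Next I would discretize $\mathcal{S}'$: for a large integer $K$, let $\{C(\mathbf{m})\}$ be the grid of closed $(k-d)$-dimensional coordinate cubes of side $1/K$ in the affine subspace containing $\mathcal{S}'$, and set
\[
A_1' = \bigcup_{C(\mathbf{m})\subset\mathcal{S}'}C(\mathbf{m}), \qquad A_2' = \bigcup_{C(\mathbf{m})\cap\mathcal{S}'\neq\varnothing}C(\mathbf{m}),
\]
so that $A_1'\subset\mathcal{S}'\subset A_2'$ and hence, with $W_j\df\pi(\mathcal{U}_1+A_j')$, one has $W_1\subset W\subset W_2$ and $N_T(W_1,{\bf x})\le N_T(W,{\bf x})\le N_T(W_2,{\bf x})$. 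The key point is that each $\mathcal{U}_1+C(\mathbf{m})$ is a box with edges parallel to $\mathbf{v}_1,\dots,\mathbf{v}_k$, and, provided $\varepsilon_0$ is small and $K$ is large enough that $\pi$ is injective on $\mathcal{U}_1+A_2'$, it is a parallelotope aligned with $\mathcal{T}$ in the sense of Theorem~\ref{thm: equidistribution for BDD}, with all sidelengths bounded above by a constant $\eta$ independent of $K$. Using the injectivity of $\pi$ on $\mathcal{U}_1+A_2'$ and the disjointness of the cubes, $N_T(W_j,{\bf x}) = \sum_{\mathbf{m}}N_T\bigl(\pi(\mathcal{U}_1+C(\mathbf{m})),{\bf x}\bigr)$ and $|W_j| = \sum_{\mathbf{m}}|\pi(\mathcal{U}_1+C(\mathbf{m}))|$, the sums running over the cubes comprising $A_j'$.

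Applying Theorem~\ref{thm: equidistribution for BDD} to each of these parallelotopes gives, for every $\delta_1>0$, a constant $C_1$ with $\bigl|N_T(\pi(\mathcal{U}_1+C(\mathbf{m})),{\bf x}) - |\pi(\mathcal{U}_1+C(\mathbf{m}))|\,|B_T|\bigr|\le C_1(1+\eta)^k T^{\delta_1}$ for all ${\bf x}$ and $T>1$. Since $\mathcal{S}'$ is a bounded subset of a $(k-d)$-dimensional affine subspace, the number of cubes comprising $A_2'$ is $O(K^{k-d})$, so summing yields $\bigl|N_T(W_j,{\bf x}) - |W_j|\,|B_T|\bigr|\le C_2 K^{k-d}T^{\delta_1}$. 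On the other hand a cube $C(\mathbf{m})$ can lie in $A_2'\setminus A_1'$ only if it meets $\partial\mathcal{S}'$, and $\dim_M\partial\mathcal{S}' = k-d-1$ provides, for any $\varepsilon>0$, at most $O(K^{k-d-1+\varepsilon})$ such cubes; since each has $(k-d)$-volume $K^{-(k-d)}$ and $|W_2|-|W_1| = |\mathcal{U}_1+(A_2'\setminus A_1')|$, we obtain $|W_2|-|W_1| = O(K^{-1+\varepsilon})$, whence $|W_2|-|W|$ and $|W|-|W_1|$ are also $O(K^{-1+\varepsilon})$. Combining the sandwich with the last two bounds and using $|B_T| = cT^d$,
\[
\bigl|N_T(W,{\bf x}) - |W|\,|B_T|\bigr|\le C_4\bigl(K^{k-d}T^{\delta_1} + K^{-1+\varepsilon}T^d\bigr).
\]

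Finally I would optimize by taking $K = \lfloor T^{(d-\delta_1)/(k-d+1-\varepsilon)}\rfloor$, which balances the two terms and gives $\bigl|N_T(W,{\bf x}) - |W|\,|B_T|\bigr|\le C_5\, T^{\,d-(d-\delta_1)(1-\varepsilon)/(k-d+1-\varepsilon)}$ for all $T>1$ (adjusting $C_5$ to absorb the bounded range of $T$). As $\delta_1,\varepsilon\to0^+$ the exponent tends to $d-\tfrac{d}{k-d+1}$, which is strictly smaller than $d-1$ exactly when $d>k-d+1$, i.e. $d>(k+1)/2$; hence for $\delta_1$ and $\varepsilon$ chosen small enough the exponent equals $d-1-\delta$ for some $\delta>0$, which is the assertion. (This estimate is then fed into Corollary~\ref{cor: dynamical for BDD} to obtain the BD conclusion of Theorem~\ref{thm: main, BDD}(1).) I expect the main obstacle to be not any single inequality but the geometric bookkeeping of the first two paragraphs: fixing $\mathcal{U}_1$ so that the thickened section genuinely decomposes into $\mathcal{T}$-aligned parallelotopes with $K$-independent sidelengths on which $\pi$ is injective, so that Theorem~\ref{thm: equidistribution for BDD} applies verbatim; granting that, the summation and the elementary optimization --- where the hypothesis $d>(k+1)/2$ is precisely what makes the exponent land below $d-1$ --- are routine.
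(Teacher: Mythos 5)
Your proposal is correct and follows essentially the same route as the paper: choose $\mathcal{U}_1$ to be a $\mathcal{T}$-aligned parallelotope in $V$, cover $\mathcal{S}$ by $(k-d)$-dimensional boxes of side $1/K$ giving $A_1\subset\mathcal{S}\subset A_2$, apply Theorem~\ref{thm: equidistribution for BDD} to the resulting $O(K^{k-d})$ aligned parallelotopes, use the Minkowski-dimension hypothesis to bound $|W|-|A_1'|$, and optimize $K$ against $T$. The only cosmetic difference is in the final step: you balance the two error terms (giving $K\asymp T^{(d-\delta_1)/(k-d+1-\varepsilon)}$), whereas the paper directly takes $K=\lfloor T^{1+\delta}\rfloor$ with $\delta$ chosen so that $(1+\delta)(k-d)<d-1-2\delta$; both yield the same exponent $d-1-\delta$ under the hypothesis $d>(k+1)/2$, which is exactly what makes the optimization land below $d-1$.
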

\begin{proof}[Proof (assuming Theorem \ref{thm: equidistribution for BDD})]
Much of this proof is analogous to the previous one, so to simplify
the exposition we omit some of the notational details. We begin by
covering the set $\mathcal{S}$ by $(k-d)$-dimensional boxes which are
translates of aligned boxes in $\spa
(\mathbf{v}_{d+1},\ldots ,\mathbf{v}_k)$ of sidelength $\eta=1/K,~K\ge
1$. As before we construct disjoint unions 
$A_1,A_2$ of such boxes with the property that $A_1\subset
\mathcal{S}\subset A_2$, and we have that
\[\Big|N_T(W,{\bf x})-|W||B_T|\Big|\le\max_{i=1,2}
\Big|N_T(A_i',\mathbf{x})-|W||B_T| \Big|,\]
with
\[A_i' \df \{u.x: u \in \mathcal{U}_1,\, x \in A_i \}.\]
We choose $\mathcal{U}_1$ to be any parallelotope in $\R^d$ which
satisfies (i) of \S \ref{subsection: sections}, and which has sides
parallel to $\mathbf{v}_1, \ldots, \mathbf{v}_d$. This is clearly
possible since we can always replace our original choice of this set
by any sub-neighborhood of the origin. With this choice of
$\mathcal{U}_1$ our sets $A_i'$ are unions of parallelotopes aligned
with $\mathcal{T}$, with a uniform bound on their sidelengths. That
is, parallelotopes to which Theorem \ref{thm:
  equidistribution for BDD} applies. The number of parallelotopes in
$A_1'$ is bounded above by a constant times $K^{k-d}$, so Theorem
\ref{thm: equidistribution for BDD} tells us that for any $\delta_0 >0
$ there is a $c_1>0$  (independent of $K$) for which 
\[\Big|N_T(A_1',{\bf x})-|A_1'||B_T|\Big|\leq c_1  T^{\delta_0}K^{k-d}.\]
Our hypothesis that $\dim_M \partial \mathcal{S}=k-d-1$ leads to the inequality
\[|B_T|\Big||A_1'|-|W|\Big| \leq c_2 \, |B_T|\frac{K^{k-d-1}}{K^{k-d}} \leq
\frac{c_3T^d}{K},\]
and using the triangle inequality as in (\ref{NTestimate2}) we have that
\[\Big|N_T(A_1',{\bf x})-|W||B_T|\Big|\le c_1 T^{\delta_0}K^{k-d}+\frac{c_3T^d}{K}.\]
Now using the hypothesis that $d>(k+1)/2$, we may assume that
$\delta_0$ has been chosen small enough so that there is a
$\delta>\delta_0$ with $(1+\delta)(k-d)<(d-1-2\delta)$. Then setting
$K=\lfloor T^{1+\delta}\rfloor$ we have that 
\[\Big|N_T(A_1',{\bf x})-|W||B_T|\Big|\le c_4T^{d-1-\delta}.\]
Since the same analysis holds for $A_2'$, the proof is complete. 
\end{proof}

\ignore{
The following is a standard fact: \combarak{Find a reference or prove,
  or derive from more standard facts. perhaps this should not be a
  separate statement, but stuck in the proof of the theorem.}
\begin{prop}\name{prop: dim enlargement}
Suppose $\mathcal{S}$ is a linear section as in Theorem \ref{thm:
  main, BL}, suppose $\mathcal{U}_1$ is a closed ball around $0$ in $\R^d$,
satisfying (i) of \S \ref{subsection: sections}, and define $V$ via \equ{eq: defn V}.
Then $\dim_M \partial V < k$.
\end{prop}
}

\section{Trigonometric polynomials approximating  aligned
  parallelotopes}\name{section: trig}

The proofs of Theorems \ref{thm: equidistribution for BL} and
\ref{thm: equidistribution for BDD} proceed with two major steps. The
first step is to prove an Erd\H{o}s-Tur\'an type inequality for Birkhoff
integrals, and the second is to use Diophantine properties of the
acting subspace to produce a further estimate on the error terms
coming from the Erd\H{o}s-Tur\'an type inequality.  Our goal in this
section is to build up the necessary machinery to complete the first
step. 

Our approach to proving the Erd\H{o}s-Tur\'an type inequality
requires approximations of the indicator function of an aligned
  parallelotope by trigonometric polynomials which {\it majorize} and
{\it minorize} it. To obtain the quality of estimates that we need, we
require the trigonometric polynomials to be close to the indicator
function of the parallelotope in $L^{1}$-norm and to have suitably
fast decay in their Fourier coefficients. The following theorem is the
main result of this section, the Fourier analysis notation will be
explained shortly. 

\begin{thm}\name{thm:extremal}
  Suppose that $\mathcal{T}=({\bf t}_{1},...,{\bf t}_{k})$ is a basis for $\R^k$ and that $L$ is the linear isomorphism mapping ${\bf e}_{i}$ to ${\bf t}_{i}$. Suppose $U\subset \R^k$ is a parallelotope, aligned with $\mathcal{T}$, given by
  $U=LB$ for a box \[B=\displaystyle\prod_{\ell=1}^{k}[-b_{\ell},b_{\ell}]\] such that $\pi|_U$ is injective. Let $\chi^{\T}_U: \T^k \to \R$ denote the
  indicator function of $\pi(U)$. 
Then for each $M\in\N$ there are trigonometric polynomials
$\varphi_{U}(\bx)$ and $\psi_{U}(\bx)$ whose Fourier coefficients are
supported in $\{ \bm\in\Z^{k}:\|L^{t}\bm\|\leq M \}$, where $L^{t}$
denotes the transpose of $L$, and 
\eq{eq:majMin}{
	\varphi_{U}(\bx)\leq \chi^{\T}_{U}(\bx)\leq\psi_{U}(\bx)
}
for each $\bx\in\T^{k}$. Moreover, there exists a constant $C>0$, depending only on $k$,  such that
\eq{eq:zeroFourier}{
	\max\left\lbrace |U|-\hat{\varphi}_{U}({\bf 0}), \hat{\psi}_{U}({\bf 0})-|U| \right\rbrace\leq \dfrac{Cb^{k-1}|\det L|}{M},
}
and the Fourier coefficients of $\varphi_{U}(\bx)$ and $\psi_{U}(\bx)$ satisfy
\eq{eq:FourierCoeffs}{
\max\left\lbrace \hat{\varphi}_{U}(\bm),\hat{\psi}_{U}(\bm) \right\rbrace\leq k2^{k+1}(1+2b)^{k}|\det L| r_{\mathcal{T}} (\bm)
}
for all nonzero $\bm\in\Z^k,$ where $r_{\mathcal{T}} (\bm)$ is defined by \equ{eq: defn r general} and $b=\displaystyle\max_{\ell}\{ b_{\ell} \}$.
\end{thm}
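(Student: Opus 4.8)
The plan is to reduce to the one-dimensional case by a tensor (product) construction, and then invoke the classical Beurling–Selberg majorant/minorant machinery. First I would observe that by the change of variables $\bx = L\by$, the problem of majorizing/minorizing $\chi^{\T}_U$ with Fourier support in $\{\bm : \|L^t\bm\| \le M\}$ is equivalent to majorizing/minorizing the indicator of the box $B = \prod_\ell [-b_\ell, b_\ell] \subset \R^k$ by trigonometric-type functions whose frequencies $\bxi = L^t\bm$ lie in the cube $\|\bxi\| \le M$; the Jacobian $|\det L|$ will account for the factor $|\det L|$ appearing in \equ{eq:zeroFourier} and \equ{eq:FourierCoeffs}. (One must be slightly careful that after the change of variables we are working with functions on the image lattice $L^{-t}\Z^k$ rather than on $\T^k$, but since $\pi|_U$ is injective this causes no essential trouble; alternatively one works directly with the function $\bx \mapsto \chi_B(L^{-1}\bx)$ and expands in the characters $\bx \mapsto e^{2\pi i \bm\cdot\bx}$.)

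Next, for a single interval $[-b_\ell, b_\ell]$, I would use the Beurling–Selberg extremal functions: there exist trigonometric polynomials $\varphi_\ell, \psi_\ell$ of degree at most $M$ (in the relevant variable) with $\varphi_\ell \le \chi_{[-b_\ell,b_\ell]} \le \psi_\ell$, with $\int (\psi_\ell - \chi) , \int (\chi - \varphi_\ell) \le \tfrac{1}{M+1}$, and — crucially for us — with an explicit bound on the individual Fourier coefficients of the form $|\hat\varphi_\ell(n)|, |\hat\psi_\ell(n)| \le \min(2b_\ell + \tfrac{1}{M+1}, \tfrac{1}{\pi|n|} + \tfrac{1}{M+1})$ or something comparable; this is exactly the sort of estimate that produces the factor $\min(1, 1/|\mathbf{t}_i\cdot\mathbf{m}|)$ defining $r_{\mathcal{T}}$. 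I would then take the tensor products $\varphi_U = \prod_\ell \varphi_\ell$ and $\psi_U = \prod_\ell \psi_\ell$; one has to check the sandwiching \equ{eq:majMin} survives the product (it does, because all the one-variable functions are bounded between $0$ and roughly $1$, so one compares $\prod \varphi_\ell \le \prod \chi_\ell = \chi_B \le \prod \psi_\ell$ — here sign-positivity of $\varphi_\ell$, $\psi_\ell$ is what one needs, which requires taking $\varphi_\ell \ge 0$ by replacing it by its positive part or by a standard normalization, and similarly $\psi_\ell$ can be taken $\le$ some absolute constant). Multiplying out, the Fourier support condition $\|L^t\bm\| \le M$ is preserved because each factor has degree $\le M$ in its own coordinate, and the Fourier coefficients of the product factor as $\hat\varphi_U(\bm) = \prod_\ell \hat\varphi_\ell(m_\ell')$ where $m_\ell'$ is the $\ell$-th coordinate of $L^t\bm$ (i.e.\ $\mathbf{t}_\ell\cdot\bm$), which is precisely what makes $r_{\mathcal{T}}(\bm)$ appear.

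The bound \equ{eq:zeroFourier} on the zeroth coefficient would then follow from $\hat\psi_U(\mathbf{0}) - |B| = \prod_\ell \hat\psi_\ell(0) - \prod_\ell (2b_\ell)$: writing this as a telescoping sum and using $\hat\psi_\ell(0) = 2b_\ell + O(1/M)$ with $\hat\psi_\ell(0), 2b_\ell \le 2b+O(1)$, one gets a bound of order $b^{k-1}/M$, and multiplying by the Jacobian $|\det L|$ gives the stated form (and similarly for $\varphi_U$). The bound \equ{eq:FourierCoeffs} follows from the product formula for the coefficients together with the single-variable coefficient bound $|\hat\varphi_\ell(n)|, |\hat\psi_\ell(n)| \le (2b_\ell + 1)\min(1, 1/|n|) \le (1+2b)\min(1,1/|n|)$, so that $\prod_\ell |\hat\varphi_\ell(\mathbf{t}_\ell\cdot\bm)| \le (1+2b)^k \prod_\ell \min(1, 1/|\mathbf{t}_\ell\cdot\bm|) = (1+2b)^k r_{\mathcal{T}}(\bm)$, up to the combinatorial constant $k2^{k+1}$ absorbing the various $O(1)$ terms; again one multiplies by $|\det L|$.

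The main obstacle, and the place where care is genuinely needed, is the \emph{sign/positivity bookkeeping in the tensor product}: the Beurling–Selberg minorant $\varphi_\ell$ is not automatically nonnegative, and a product of functions that individually satisfy $\varphi_\ell \le \chi_\ell$ need not satisfy $\prod \varphi_\ell \le \prod \chi_\ell$ unless one controls signs. The standard fix — replace $\varphi_\ell$ by $\max(\varphi_\ell, 0)$, which is still a minorant but is no longer a trigonometric polynomial — does not work directly, so instead one uses the known fact that the Beurling–Selberg minorant for an interval can be chosen nonnegative at the cost of an absolute constant factor in its $L^1$-defect and coefficient bounds, or one shifts and scales to reduce to the standard one-sided Beurling function where these properties are classical. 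Getting the constants to come out in the clean form $k2^{k+1}(1+2b)^k$ and $Cb^{k-1}$ is then a matter of bookkeeping; I expect the proof in the paper to handle this via an explicit construction (perhaps using Vaaler's polynomials or a Fejér-kernel-based smoothing) rather than a black-box citation.
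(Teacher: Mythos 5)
Your majorant argument is fine: the one-variable Selberg majorant $C_\ell$ of $\chi_{[-b_\ell,b_\ell]}$ is nonnegative, so the product $\prod_\ell C_\ell(x_\ell)$ does majorize $\chi_B$, its Fourier coefficients factor, and $r_{\mathcal T}(\mathbf m)$ falls out exactly as you describe. But the minorant cannot be handled by your proposed route. You suggest replacing the Selberg minorant $c_\ell$ by a nonnegative bandlimited minorant, asserting this is possible ``at the cost of an absolute constant.'' It is not: a nonnegative function $f\le\chi_{[-b,b]}$ must vanish identically outside $[-b,b]$, and a nonzero bandlimited function on $\R$ (equivalently a nonzero trigonometric polynomial on $\T$) cannot vanish on a set of positive measure. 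So the only nonnegative bandlimited minorant of a proper subinterval is the zero function, which is useless here. Shifting and rescaling to Beurling's one-sided function does not help either, since those operations preserve the sign pattern. This is not a bookkeeping nuisance but a genuine obstruction; the tensor product $\prod_\ell c_\ell$ simply fails to minorize $\chi_B$ (for instance it is strictly positive outside $B$ whenever an even number of the factors are negative).

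What the paper actually does, following a device of Selberg (also used by Montgomery--Vaaler and Harman), is to replace the product minorant by the combination
\begin{equation*}
g_B(\mathbf{x}) \;=\; -(k-1)\prod_{j=1}^{k} C_j(x_j)\;+\;\sum_{i=1}^{k} c_i(x_i)\prod_{j\ne i} C_j(x_j).
\end{equation*}
If $\mathbf{x}\notin B$, pick $i$ with $|x_i|>b_i$; then $c_i(x_i)\le 0$ so the $i$th summand is $\le 0$, while each other summand is $\le\prod_j C_j$ (using $c_{j'}\le C_{j'}$), giving $g_B(\mathbf{x})\le 0$. If $\mathbf{x}\in B$, then $c_j(x_j)\le 1\le C_j(x_j)$ for all $j$, and the elementary inequality $\sum_i\prod_{j\ne i}\beta_j\le 1+(k-1)\prod_j\beta_j$ for $\beta_j\ge 1$ gives $g_B(\mathbf{x})\le 1$. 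Since $g_B$ is a short linear combination of products of one-variable bandlimited functions, its Fourier coefficients still split into products, so your bound in terms of $r_{\mathcal T}(\mathbf m)$ applies termwise, and the extra factor of $k$ in \equ{eq:FourierCoeffs} simply records the number of terms. The rest of your outline --- the change of variables by $L$ with Jacobian $|\det L|$, the periodization and Poisson summation to produce trigonometric polynomials, and the telescoping bound on the zeroth coefficient --- is correct and consistent with the paper's argument.
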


\indent We note that some form of this result is alluded to in \cite[Proof of Theorem 5.25]{Harman1998}, and since we could not find a suitable reference we will give the full details here. There are, however, known constructions which handle the case when $U$ is rectangular \cite{BMV,MR947641,DT,Harman1993}.  Our proof requires the well known construction of Selberg regarding extremal approximations of the indicator functions of intervals by integrable functions with compactly supported Fourier transforms, which we will recall below. To move to several variables we bootstrap from the single variable theory using another construction due to Selberg, who never published his results. A similar construction can be found in \cite{MR947641,Harman1993}. \newline
\indent Let $e(x) \df \exp(2\pi i x)$. We will use the same
notation for the Fourier transform of a function $F \in L^1(\R^{N})$ and
for a function $f:\R^{N} \to \R$ which is periodic with respect to $\Z^N$. That is
\[\hat{F}({\bf t}) \df \int_{\R^{N}} F({\bf x})e(-{\bf t\cdot x})~d{\bf x}, \ {\bf t} \in \R^{N}; \ \
\hat{f}({\bf m}) \df \int_{[0,1)^{N}} f(\boldsymbol{\theta})e(-{\bf m\cdot}\boldsymbol{\theta})~d\boldsymbol{\theta}, \ {\bf m} \in \Z^{N}.\]
The reader should have no difficulty distinguishing these two uses. \newline

If $I\subset \R$ is an interval, let $ \chi(t)=\chi_{I}(t)$ be its indicator function. The following lemma is due to Selberg.
\begin{lem}\name{lem:SelbergLemma}
For each positive integer $M$ there exist integrable functions $C_{I},c_{I}:\R\rightarrow\R$ such that
  \begin{enumerate}
   \item $c_{I}(t)\leq \chi(t)\leq C_{I}(t)$ for each $t\in\R$;
   \item $\hat{C}_{I}(\xi)=\hat{c}_{I}(\xi)=0$ whenever $|\xi|\ge M$,  
   \item \begin{equation} \label{eq:norms}
	\|C_{I}-\chi\|_{L^{1}(\R)}=\|\chi-c_{I}\|_{L^{1}(\R)}=\frac1M,\text{ and}
    \end{equation}
   \item $$ 
	 \max\left\{\left|\hat{C}_{I}(\xi)\right|,
         \left|\hat{c}_{I}(\xi)
         \right|
         \right\}\leq \min\lb 1+|I| , \dfrac{2}{|\xi|}\rb.
	$$ 
	for each $\xi\in\R$.
  
  \end{enumerate}
\end{lem}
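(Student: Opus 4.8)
The plan is to reduce to the classical Beurling--Selberg extremal problem for the signum function and then transfer to the indicator of an interval by a difference of translates. First I would recall the Beurling function $B(x)$, the entire function of exponential type $2\pi$ satisfying $B(x) \geq \operatorname{sgn}(x)$ for all real $x$ and $\int_{\R}(B(x)-\operatorname{sgn}(x))\,dx = 1$, together with the associated minorant $-B(-x)$. From these one builds, for the interval $I = [\alpha,\beta]$ and for the scaling parameter $M$, the functions
\[
C_I(t) \df \tfrac12\bigl(B(M(t-\alpha)) + B(M(\beta-t))\bigr), \qquad
c_I(t) \df \tfrac12\bigl(-B(M(\alpha-t)) - B(M(t-\beta))\bigr),
\]
after rescaling so that the exponential type becomes $2\pi M$; equivalently one works with $B_\delta(x) = B(x/\delta)$ for the appropriate $\delta$. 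The inequality $c_I(t) \leq \chi_I(t) \leq C_I(t)$ in item (1) is immediate from writing $\chi_I(t) = \tfrac12(\operatorname{sgn}(t-\alpha) + \operatorname{sgn}(\beta-t))$ and applying the defining one-sided bounds for $B$ termwise.

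Next I would verify the Fourier support statement (2): since each summand is entire of exponential type $2\pi M$ and integrable on $\R$, the Paley--Wiener theorem gives that $\hat C_I$ and $\hat c_I$ are supported in $[-M,M]$, so they vanish for $|\xi| \geq M$ (one may arrange a closed or half-open convention by an arbitrarily small shrinking of the type, which does not affect the other estimates). For item (3), the $L^1$ identity is exactly the Beurling normalization: $\|C_I - \chi_I\|_{L^1} = \int_\R (C_I - \chi_I) = \tfrac12 M^{-1} \int B(x) - \operatorname{sgn}(x)\,dx \cdot 2 = M^{-1}$, using that $C_I - \chi_I \geq 0$ so the $L^1$ norm equals the integral, which in turn equals $\hat C_I(0) - \hat\chi_I(0) = \hat C_I(0) - |I|$; the computation for $c_I$ is identical. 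Finally, for the pointwise Fourier bound in (4): the bound $|\hat C_I(\xi)| \leq 1 + |I|$ follows from $\hat C_I(0) = |I| + M^{-1} \leq |I| + 1$ together with $|\hat C_I(\xi)| \leq \hat C_I(0)$ (valid since $C_I \geq 0$), and similarly $|\hat c_I(\xi)| \leq \hat c_I(0) = |I| - M^{-1} \leq |I| \leq 1+|I|$. The complementary bound $2/|\xi|$ comes from the explicit Fourier transform of the Beurling function: $\hat B$ has a simple pole-type behaviour of size $O(1/|\xi|)$ away from the origin, and taking the difference-of-translates structure of $C_I, c_I$ one tracks constants to land on exactly $2/|\xi|$. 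Taking the minimum of the two bounds gives item (4).

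The main obstacle is bookkeeping the constants in item (4), specifically getting the clean value $2/|\xi|$ rather than some unspecified $O(1/|\xi|)$, since the downstream argument (Theorem \ref{thm:extremal} and the Erd\H{o}s--Tur\'an inequality) relies on this explicit shape through the definition \equ{eq: defn r general} of $r_{\mathcal{T}}$. I expect this to require writing down Selberg's explicit formula for $\hat C_I$ and $\hat c_I$ — these have a known closed form in terms of $\hat B(\xi)$, which itself is expressible via the Fej\'er kernel and an elementary rational function — and then estimating $|\hat B(\xi)|$ directly; after rescaling by $M$ the type-$2\pi M$ functions have Fourier transforms whose relevant estimate becomes $\leq 2/|\xi|$ on the support $|\xi| < M$. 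Everything else (entireness, exponential type, the one-sided inequalities, the $L^1$ normalization) is standard Beurling--Selberg theory and can be cited, so the write-up can be kept short once the constant in (4) is pinned down.
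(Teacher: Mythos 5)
Your outline of items (1)--(3) is the standard Beurling--Selberg construction, and the paper simply cites \cite{Vaaler1985} and \cite{SelbergCollectedWorks} for those; the only part the paper proves is item (4), and there your proposed route has a genuine gap and is also harder than necessary.

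The gap is in the bound $|\hat c_I(\xi)| \leq 1+|I|$. You justify $|\hat C_I(\xi)| \leq \hat C_I(0)$ by nonnegativity of $C_I$ (which is fine, since $C_I \geq \chi_I \geq 0$), and then write ``similarly'' for $c_I$. But $c_I$ is a \emph{minorant} of an indicator: $c_I(t) \leq \chi_I(t)$, so $c_I(t) \leq 0$ off $I$, and in fact $c_I$ must take negative values (otherwise $\hat c_I(0) = \|c_I\|_{L^1} \geq |I|$, contradicting $\hat c_I(0) = |I| - M^{-1}$). So the inequality $|\hat c_I(\xi)| \leq \hat c_I(0)$ is simply false reasoning for $c_I$, and that step does not go through. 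The same structural issue would resurface if you tried to push the explicit-Fourier-transform computation for the $2/|\xi|$ bound through sign considerations.

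The paper closes both cases at once by a much shorter argument that you should adopt: from item (3), $\sup_\xi |\hat C_I(\xi) - \hat\chi(\xi)| \leq \|C_I - \chi\|_{L^1} = M^{-1}$ and likewise $\sup_\xi |\hat c_I(\xi) - \hat\chi(\xi)| \leq M^{-1}$, so $|\hat C_I(\xi)|, |\hat c_I(\xi)| \leq |\hat\chi(\xi)| + M^{-1}$. Since $\hat\chi(\xi)$ is (up to a unimodular phase) $\sin(\pi|I|\xi)/(\pi\xi)$, one has $|\hat\chi(\xi)| \leq |I|$ always and $|\hat\chi(\xi)| \leq (\pi|\xi|)^{-1}$ for $\xi \neq 0$; combining with $M^{-1} \leq 1$ gives $1+|I|$, and for $1 < |\xi| < M$ combining with $M^{-1} < |\xi|^{-1}$ gives $(\pi|\xi|)^{-1} + |\xi|^{-1} < 2/|\xi|$, while for $|\xi| \geq M$ both transforms vanish by (2). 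This sidesteps nonnegativity entirely, treats $C_I$ and $c_I$ symmetrically, and never requires the explicit formula for $\hat B$ that you anticipated having to ``track constants'' through — the only explicit transform needed is that of the interval indicator.
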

\begin{proof}
	We will only verify the estimates on the Fourier coefficients appearing in (4) above. The other properties are well known and can be found in \cite{Vaaler1985} or in \cite{SelbergCollectedWorks}.
						From (\ref{eq:norms}) we have
					$$ 
							    \displaystyle\sup_{\xi\in\R}\left|\hat{C}_{I}(\xi)-\hat{\chi}
                                                            (\xi)\right|\leq \|C_{I}-\chi\|_{L^{1}(\R)}=\frac1M.
					$$ 
						In particular for any fixed $\xi$ we have
						      \begin{equation} \label{eq:ineq1}
						\left	    |\hat{C}_{I}(\xi)\right| \leq \left|\hat{\chi}(\xi)\right| + \frac1M.
						      \end{equation}
					For any $1<|\xi|<M$ we have 
					$ 
							   \left |\hat{\chi}(\xi)\right|=|\sin(\pi\xi|I|)/\pi\xi|\leq |\pi\xi|^{-1},
					$ 
hence
$$
	    |\hat{\chi}(\xi)| + \frac1M < |\pi\xi|^{-1}+ |\xi|^{-1}
            < \frac{2}{|\xi|},
$$

	therefore
					$$
							    |\hat{C}_{I}(\xi)|\leq
                                                            \dfrac{2}{|\xi|}~\text{ for }~1<|\xi|<M.
					$$
						Recall that
                                                $\hat{C}_{I}(\xi)=0$
                                                if $|\xi|\geq M$ so it
                                                remains to show
                                                $|\hat{C}_{I}(\xi)|\leq
                                                1+|I|$ when $|\xi|<1$. But
                                                by (\ref{eq:ineq1}) we
                                                have
$$
| \hat{C}_{I}(\xi)| \leq \sup_{|\xi| < 1}
\left|\frac{\sin(\pi\xi|I|)}{\pi \xi} \right| + \frac1M \leq |I|+1.
$$
This concludes the proof for $\hat{C}_I$, and the proof for
$\hat{c}_I$ is nearly identical.
					\end{proof}
\ignore{
\indent We are now in a position to define Selberg's functions. The interested reader will find a detailed account of what follows in the paper of Vaaler \cite{Vaaler1985}. We define the following auxiliary functions:
\begin{align*}
H(x)&\df\left(\frac{\sin(\pi x)}{\pi}\right)^2\left(\sum_{n\in\Z}\frac{\sgn (n)}{(x-n)^2}+\frac{2}{x}\right),\\
K(x)& \df \left(\frac{\sin(\pi x)}{\pi x}\right)^2. 
\end{align*}


\indent The construction of the functions $G_{M}(\bx)$ and
$g_{M}(\bx)$ was known to A. Selberg, but it was never published. It
can be shown that among all majorants of the indicator function
of $[-1,1]^{N}$ of exponential type with respect to $[-M,M]^{N}$ (in
the sense of \cite{SteinWeiss1971}), $G_{M}(\bx)$ has the smallest
$L^{1}$-norm. However, it is not known if $g_{M}(\bx)$ has the largest
$L^{1}$-norm among all minorants of exponential type with respect to
$[-M,M]^{N}$.

Using the functions $H$ and $K$ above 
we define Beurling's functions
$$B,b:\C\rar\C, \ \ \ B(z) \df H(z)+K(z) \ \ \mathrm{and} \ b(z) \df
H(z)-K(z).$$
For $M\in\N$ if we write
$$B_M(z) \df B(Mz), \ \ b_M(z) \df b(Mz)$$
then it can be shown that
					$$ 
						  b_{M}(x)\leq\sgn(x)\leq B_{M}(x)\text{ for all }x\in\R.
			$$ 
					Furthermore $B_{M}$ and $b_{M}$ have exponential type $2\pi M$  and
				$$ 
						    \|b_{M}-\sgn\|_{L^{1}(\R)}=\|B_{M}-\sgn\|_{L^{1}(\R)}=\frac1M.
				$$ 
					Let
					   $$ 
						      \chi(x)=\frac{\sgn(x+1)+\sgn(1-x)}{2}
					    $$ 
					be the indicator function
                                        of $[-1,1]$, normalized to
                                        take the midpoint values at
                                        discontinuities. Then
                                        Selberg's functions, 
defined by
$$ 
						    C_{M}(z) \df \frac{B_{M}(z+1)+B_{M}(1-z)}{2}
$$ 
					and
$$ 
						  c_{M}(z) \df \frac{b_{M}(z+1)+b_{M}(1-z)}{2},
$$ 
satisfy the inequalities
				$$ 
						  c_{M}(x)\leq\chi(x)\leq C_{M}(x).
				$$ 
Furthermore they have exponential type $2\pi M$ and
					    \begin{equation} \label{eq:norms}
						    \|C_{M}-\chi\|_{L^{1}(\R)}=\|\chi-c_{M}\|_{L^{1}(\R)}=\frac1M.
					    \end{equation}
Since $C_M$ and $c_M$ are entire functions of exponential type $2\pi
M$ that are square integrable on the real axis, the Paley-Wiener
theorem gives that their Fourier transforms $\hat{C}_{M}$ and $\hat{c}_{M}$ are
supported in $[-M,M]$. Furthermore, by (\ref{eq:norms}) we see
that  \begin{equation}\label{eq:C^0}
\hat{C}_{M}(0)=2+\frac1M~\text{ and }~\hat{c}_{M}(0)=2-\frac1M.
\end{equation}
We will need to bound the other Fourier coefficients:
\begin{lem} \label{lem:C^bounds}
					     For any positive integer $M$ and real number $\xi$ we have
					$$ 
							    \max\left\{\left|\hat{C}_{M}(\xi)\right|,
                                                              \left|\hat{c}_{M}(\xi)
                                                                \right|
                                                                \right\}\leq 3\min\lb 1, \dfrac{1}{|\xi|}\rb.
					$$ 
\end{lem}
					\begin{proof}
						From (\ref{eq:norms}) we have
					$$ 
							    \displaystyle\sup_{\xi\in\R}\left|\hat{C}_{M}(\xi)-\hat{\chi}
                                                            (\xi)\right|\leq \|C_{M}-\chi\|_{L^{1}(\R)}=\frac1M.
					$$ 
						In particular for any fixed $\xi$ we have
						      \begin{equation} \label{eq:ineq1}
						\left	    |\hat{C}_{M}(\xi)\right| \leq \left|\hat{\chi}(\xi)\right| + \frac1M.
						      \end{equation}
					For any $1<|\xi|<M$ we have 
					$ 
							   \left |\hat{\chi}(\xi)\right|=|\sin(2\pi\xi)/\pi\xi|\leq |\pi\xi|^{-1},
					$ 
hence
$$
	    |\hat{\chi}(\xi)| + \frac1M < |\pi\xi|^{-1}+ |\xi|^{-1}
            < \frac{2}{|\xi|},
$$
%
%
%
	therefore 
					$$ 
							    |\hat{C}_{M}(\xi)|\leq
                                                            \dfrac{2}{|\xi|}~\text{ for }~1<|\xi|<M.
					$$ 
						Recall that
                                                $\hat{C}_{M}(\xi)=0$
                                                if $|\xi|\geq M$ so it
                                                remains to show
                                                $|\hat{C}_{M}(\xi)|\leq
                                                3$ when $|\xi|<1$. But
                                                by (\ref{eq:ineq1}) we
                                                have
$$
| \hat{C}_{M}(\xi)| \leq \sup_{|\xi| < 1}
\left|\frac{\sin(2\pi\xi)}{\pi \xi} \right| + \frac1M \leq 2+1.
$$
This concludes the proof for $\hat{C}_M$, and the proof for
$\hat{c}_M$ is nearly identical.
					\end{proof}
}
\subsection{Majorizing and minorizing a rectangle in $\R^k$}\name{sec: majorizing}
			 From here on out we will use the notation $C_{i}(x)=C_{[-b_{i},b_{i}]}(x)$.   For any $M\in\N$ the indicator
                            function $\chi_{B}$ of
                            $B\subset \R^k$ is clearly
                            majorized by the function \newline
                            
				      \begin{equation} \label{eq:maj}
					  G_{B}({\bf x}) \df \displaystyle\prod_{j=1}^{k}C_{j}(x_{j}).
				      \end{equation}
			
			    Minorizing $\chi_{B}$ requires a little more effort. For $i=1,2,\ldots ,k$ define
	$$ 
				      L_{B}({\bf
                                        x};i) \df c_{i}(x_{i})\underset{j\neq
                                        i}{\displaystyle\prod_{j=1}^{k}}C_{j}(x_{j}),
		$$ 
and then set
			$$ 
					g_{B}({\bf x}) \df  -(k-1)G_{B}({\bf x})+\dsum_{i=1}^k L_{B}({\bf x};i).
			$$ 
			
				We claim that
\begin{equation}\label{eq:g_Mminorant}
g_{B}(\bx)\leq \chi_{B}(\bx)~\text{ for every }\bx\in\R^k.
\end{equation}
To establish this we use the following  elementary inequality, which
can be proved by induction on $k$:
\eq{eq: lem:prodlem}{
%
\mathrm{For \  any} \ \beta_{1} \geq 1, \ldots ,\beta_{k} \geq 1, \ \ \
\dsum_{i=1}^{k} \, \underset{j\neq
  i}{\displaystyle\prod_{j=1}^{k}}\beta_{j} \leq 1+ (k-1) \displaystyle\prod_{j=1}^{k}\beta_{j}.
}
\ignore{
\begin{proof}
The statement of the lemma is clearly true when $k=1$. Now suppose
that it is true for some integer $k\in\N$. Then given $\beta_1,\ldots
,\beta_{k+1}\in [1,\infty)$ we have that
\begin{eqnarray*}
\dsum_{i=1}^{k+1}\underset{j\neq
  i}{\displaystyle\prod_{j=1}^{k+1}}\beta_j
													 &=&
                                                                                                         \displaystyle\prod_{j=1}^{k}\beta_j
                                                                                                         +\beta_{k+1}\dsum_{i=1}^{k}\underset{j\neq
                                                                                                           i}{\displaystyle\prod_{j=1}^{k}}\beta_{j}
                                                                                                         \\
													 &\leq&
                                                                                                         \displaystyle\prod_{j=1}^{k}\beta_{j}
                                                                                                         +\beta_{k+1}\lb
                                                                                                         1+
                                                                                                         (k-1)\displaystyle\prod_{j=1}^{k}\beta_{j}
                                                                                                         \rb
                                                                                                         \\
													 &=&
                                                                                                         \beta_{k+1}
                                                                                                         +
                                                                                                         \displaystyle\prod_{j=1}^{k}\beta_{j}
                                                                                                         +
                                                                                                         (k-1)\displaystyle\prod_{j=1}^{k+1}\beta_{j}
                                                                                                         \\
													 &\leq&
                                                                                                         1
                                                                                                         +(\beta_{k+1}-1)\displaystyle\prod_{j=1}^{k}\beta_j
                                                                                                         +
                                                                                                         \displaystyle\prod_{j=1}^{k}\beta_j
                                                                                                         +
                                                                                                         (k-1)\displaystyle\prod_{j=1}^{k+1}\beta_j
                                                                                                         \\
													 &=&
                                                                                                         1+
                                                                                                         k\displaystyle\prod_{j=1}^{k+1}\beta_j.
											\end{eqnarray*}
Therefore the lemma is true by induction.
							\end{proof}
}

To verify the inequality (\ref{eq:g_Mminorant}), first suppose
that $\bx\not\in B$. Then there is an $1\le i\le k$ with
$|x_i|>b_{i}$. Since $L_{B}(\bx;i)\leq 0$ and $L_{B}(\bx;j)\leq
G_{B}(\bx)$ for all $j\not= i$, we have that
\[\dsum_{i=1}^k L_{B}(\bx;j) \leq (k-1)G_{B}(\bx),\]
which implies
$     g_{B}(\bx)\leq 0$.
On the other hand if $\bx\in B$ then we have that     
				  			\begin{equation*}
				  					c_{j}(x_{j})\leq 1 \leq C_{j}(x_{j}).
							\end{equation*}
Then by \equ{eq: lem:prodlem} we have that	
							\begin{eqnarray*}
							 	\dsum_{i=1}^{k}L_{B}(\bx;i)&
                                                                \leq
                                                                &\dsum_{i=1}^{k}\underset{j\neq
                                                                  i}{\displaystyle\prod_{j=1}^{k}}C_{j}(x_{j})\le
                                                                1+
                                                                (k-1)G_{B}(\bx),
							\end{eqnarray*}
and this together with the definition of $g_B$ establishes
(\ref{eq:g_Mminorant}).


\subsection{Proof of Theorem \ref{thm:extremal} }

Define
				   \begin{equation*}
					  \mathscr{G}_{U}(\bx) \df G_{B}\circ L^{-1}(\bx)
\ \mathrm{			    and} \
					  \mathscr{F}_{U}(\bx) \df g_{B}\circ L^{-1}(\bx).
				   \end{equation*}				
			    The results of \S \ref{sec: majorizing} show that
				    \begin{equation*}
					  \mathscr{F}_{U}(\bx)\leq\chi_{U}(\bx)\leq\mathscr{G}_{U}(\bx)~\text{
                                            for all }~\bx\in\R^k.
				    \end{equation*}
For the majorants and minorants of $\chi^\T_U$ define
				    \begin{equation*}
					  \varphi_{U}(\bx) \df \dsum_{\bm\in\Z^k}\mathscr{F}_{U}(\bx+\bm)	
			  \ \ \ \mathrm{and} \ \
				  \psi_{U}(\bx) \df \dsum_{\bm\in\Z^k}\mathscr{G}_{U}(\bx+\bm).
				    \end{equation*}
These functions are $\Z^k$ invariant, so we can view them as functions
on $\T^k$, and since $\pi|_U$ is injective we have
				    \begin{equation}\label{eq:chiTmaj-min}
					  \varphi_{U}(\bx)\leq \chi^\T_U(\bx) \leq \psi_{U}(\bx)~\text{ for all }~\bx\in\T^k.
				    \end{equation}
To determine the Fourier transform of $\msG_U$ and $\msF_U$, observe
that if $f:\R^k\rightarrow\C$ is an integrable function then $f\circ
L^{-1}$ is also integrable and
\begin{equation}\label{eq:Tfourcoeffs}
						    \widehat{f\circ L^{-1}}(\bxi)=
                                                                              |\det
                                                                              L|\hat{f}(L^{t}\bxi).
						    \end{equation}
	Since $\hat{G}_{B}(\bxi)=0$ and $\hat{g}_{B}(\bxi)=0$ when
        $\|\bxi\|\geq M$, both $\hat{\mathscr{F}}_{U}$ and
        $\hat{\mathscr{G}}_{U}$ are supported on $\{\bxi \in \R^k
        : \|L^{t} \bxi \| \leq M\}$. Thus, by the
        Poisson summation formula and a classical theorem of P\'olya and Plancherel \cite{PP}, we have the following {\it pointwise} identities
				    \begin{equation} \label{eq:rho1}
					  \psi_{U}(\bx)=\dsum_{\substack{\bm\in\Z^k \\
                                            \| L^{t} \bm \| \leq M}}\hat{\mathscr{G}}_{U}(\bm)e(\bm\cdot\bx)
				    \end{equation}		
		      and
				    \begin{equation} \label{eq:eta1}
					  \varphi_{U}(\bx)=\dsum_{\substack{\bm\in\Z^k \\
                                            \|L^{t} \bm \| \leq M }}\hat{\mathscr{F}}_{U}(\bm)e(\bm\cdot\bx).
				    \end{equation}
We will
need the following formulas for the Fourier coefficients of $\psi_U$
and $\varphi_U$:
					    \eq{eq: first formula}{
						  \hat{\psi}_{U}(\bm)=|\det
                                                 L|\displaystyle\prod_{i=1}^{k}\hat{C}_{i}(\bt_{i}\cdot\bm)
					    }
and
					    \eq{eq: second formula}{
						  \hat{\varphi}_{U}(\bm)=
                                                  |\det L|\left(-(k-1)\displaystyle\prod_{i=1}^{k}\hat{C}_{i}(\bt_{i}\cdot\bm)+\dsum_{i=1}^{k}\hat{c}_{i}(\bt_{i}\cdot
                                                    \bm)\underset{j\neq
                                                      i}{\displaystyle\prod_{j=1}^{k}}\hat{C}_{j}(\bt_{j}\cdot\bm)\right).
					    }
			To see \equ{eq: first formula}, first observe that 	$\hat{\psi}_{U}(\bm)=\hat{\mathscr{G}}_{U}(\bm)$ then 
                                                        by
                                                        (\ref{eq:Tfourcoeffs})
                                                        and basic
                                                        properties of
                                                        the Fourier
                                                        transform we
                                                        have that
								  \begin{eqnarray*}
									\hat{\mathscr{G}}_{U}(\bm)= 
                                                                        \widehat{G_{B}\circ
                                                                          L^{-1}}(\bm) &=& |\det L |\hat{G}_{B}(L^{t}\bm)\\
												  &=&
                                                                                                  |\det L |\displaystyle\prod_{i=1}^{k}\hat{C}_{i}(\bt_{i}\cdot\bm).
								  \end{eqnarray*}
The proof of \equ{eq: second formula} is similar.
				By (\ref{eq:norms}) we see
that  \begin{equation}\label{eq:C^0}
\hat{C}_{i}(0)=2b_{i}+\frac1M~\text{ and }~\hat{c}_{i}(0)=2b_{i}-\frac1M.
\end{equation}
			Now by using \equ{eq: first formula} and
                        \equ{eq: second formula}, together with (\ref{eq:C^0}) we find that
\[ 
								  \hat{\psi}_{U}({\bf 0})=|\det L |\displaystyle\prod_{i=1}^{k}\left(2b_{i}+M^{-1}\right)
\] 
						    and
							  \begin{eqnarray*}
								\hat{\varphi}_{U}({\bf
                                                                  0})
                                                                &=&
                                                                |\det L |\left(-(k-1)\displaystyle\prod_{i=1}^{k}(2b_{i}+M^{-1})+\dsum_{j=1}^{k}(2b_{j}-M^{-1})\underset{i\neq
                                                                    j}{\displaystyle\prod_{i=1}^{k}}(2b_{i}+M^{-1}) \right)      \\
										  &=& 2^{k}b_{1}\cdots b_{k}|\det  L| +|\det  L|O(b^{k-1}/M).
 							  \end{eqnarray*}
The bounds \equ{eq:zeroFourier} follow upon recalling that $|U|=2^{k}b_{1}\cdots b_{k}|\det L|$. For the other Fourier coefficients we use (4) from Lemma \ref{lem:SelbergLemma} to obtain the inequalities
							\begin{equation*}
							      |\hat{\psi}_{U}(\bm)|
                                                              = |\det L |    \displaystyle\prod_{i=1}^{k}\left|\hat{C}_{i}(\bt_{i}\cdot\bm)\right|
                                                         \le
                                                              2^{k}(1+2b)^{k}|\det L |
                                                              \displaystyle\prod_{i=1}^{k}\min\lb
                                                              1,
                                                              \dfrac{1}{|\bt_{i}\cdot\bm|}\rb
							\end{equation*}		
and
							\begin{eqnarray*}
							      |\hat{\varphi}_{M}(\bm)|
										    &\leq&
                                                                                    \left|\det L \right|\left(\left|(k-1)\displaystyle\prod_{i=1}^{k}\hat{C}_{i}(\bt_{i}\cdot\bm)\right|+\dsum_{j=1}^{k}\left|\hat{c}_{j}(\bt_{j}\cdot
                                                                                        \bm)\underset{i\neq
                                                                                          j}{\displaystyle\prod_{i=1}^{k}}\hat{C}_{i}(\bt_{i}\cdot\bm)\right|
                                                                                    \right)
                                                                                    \\
										    &\leq&
                                                                                    2^{k}(2k-1)(1+2b)^{k}|\det L |\displaystyle\prod_{i=1}^{k}\min\lb
                                                                                    1,
                                                                                    \dfrac{1}{|\bt_{i}\cdot\bm|}\rb.
							\end{eqnarray*}	
Combining these estimates with (\ref{eq:chiTmaj-min}),
(\ref{eq:rho1}), and (\ref{eq:eta1}) finishes our proof. \qed

\section{An Erd\H{o}s-Tur\'an type inequality for Birkhoff
  integrals}\name{section: ET}
From Theorem \ref{thm:extremal} we deduce:\newline

\begin{thm}\name{thm:NTestimate}
For any positive integer $k\geq 2$ there is a constant $C>0$, depending only on $k$, such that the
following holds. Suppose that $d < k$ is a positive integer and that $V\subset\R^k$
is a subspace of dimension $d$ spanned by $\{{\bf v}_1,\ldots ,{\bf v}_d\}.$ Let $\tilde{L} :
\R^k \to \R^k$ be
an affine isomorphism such that $\pi$ is injective on the parallelotope
$U=\tilde{L}B$ where $B$ and $b$ are as in Theorem \ref{thm:extremal}. Let $\mathcal{T}$ denote the basis $L(\E_i), i=1,
\ldots, k$, where $L$ is the linear part\footnote{We recall that the linear part of an affine isomorphism $\tilde{L}:\R^k\rightarrow\R^k$ is the unique linear transformation $L:\R^k\rightarrow\R^k$ such that $L\bx=\tilde{L}\bx-\tilde{L}{\bf 0}$ for each $\bx\in\R^k$.} of $\tilde{L}$. Then for any $M\in\N$
and ${\bf x}\in\R^k$ we have
\eq{eq:NTestimate}{
\Big|N_T(U,{\bf
  x})-|U||B_T|\Big|\leq C (1+2b)^{k}|\det L| \left( \frac{|B_T|}{M}+\sum_{\substack{{\bf m}\in
    \Z^k \sm \{0\} \\ \|L^{t} {\bf m}\| \leq M}}r_{\mathcal{T}}({\bf
  m})\left|\int_{B_T}e({\bf m}\cdot {\bf s})~d{\bf s}\right| \right).
}
\end{thm}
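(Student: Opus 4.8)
The substantive analysis is already contained in Theorem \ref{thm:extremal}; the plan is to sandwich $N_T(U,\bx)$ between the Birkhoff integrals of its majorant and minorant and expand. The only preliminary point is the affine shift: write $\tilde L\bx = L\bx + \mathbf{c}$ with $\mathbf{c}\df\tilde L{\bf 0}$, so that $U = \tilde L B = LB + \mathbf{c}$. Since $\pi$ is injective on $U$, it is injective on the centered parallelotope $LB$ as well, and $\chi^{\T}_U(\bx) = \chi^{\T}_{LB}(\bx-\mathbf{c})$. Applying Theorem \ref{thm:extremal} to $LB$ --- whose associated linear isomorphism is $L$ and associated basis is exactly $\mathcal{T} = (L\E_1,\dots,L\E_k)$ --- and translating by $\mathbf{c}$, we obtain trigonometric polynomials $\varphi_U(\bx)\df\varphi_{LB}(\bx-\mathbf{c})$ and $\psi_U(\bx)\df\psi_{LB}(\bx-\mathbf{c})$ with $\varphi_U \le \chi^{\T}_U \le \psi_U$ on $\T^k$ and Fourier coefficients supported on $\{\bm\in\Z^k : \|L^{t}\bm\|\le M\}$. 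Translating multiplies each Fourier coefficient by a unimodular phase, so $\hat{\varphi}_U({\bf 0}) = \hat{\varphi}_{LB}({\bf 0})$, $|\hat{\varphi}_U(\bm)| = |\hat{\varphi}_{LB}(\bm)|$, and similarly for $\psi_U$; since $|U| = |LB| = 2^k b_1\cdots b_k|\det L|$, the estimates \equ{eq:zeroFourier} and \equ{eq:FourierCoeffs} carry over verbatim to $\varphi_U$ and $\psi_U$. Integrating the inequality $\varphi_U \le \chi^{\T}_U \le \psi_U$ at the points $\pi(\bx+\bt)$ over $\bt\in B_T$ then gives
\[
\int_{B_T}\varphi_U(\pi(\bx+\bt))\,d\bt \ \le\ N_T(U,\bx) \ \le\ \int_{B_T}\psi_U(\pi(\bx+\bt))\,d\bt.
\]

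Next I would bound each of these two integrals against $|U||B_T|$. Because $\psi_U$ is a finite trigonometric sum and $e(\bm\cdot\pi(\bx+\bt)) = e(\bm\cdot\bx)e(\bm\cdot\bt)$ for $\bm\in\Z^k$, integrating term by term over $B_T$ and splitting off the frequency $\bm = {\bf 0}$, whose contribution is $\hat{\psi}_U({\bf 0})\,|B_T|$, yields
\[
\Big|\int_{B_T}\psi_U(\pi(\bx+\bt))\,d\bt - |U||B_T|\Big| \le \big|\hat{\psi}_U({\bf 0}) - |U|\big|\,|B_T| + \sum_{\substack{\bm\in\Z^k\sm\{0\}\\ \|L^{t}\bm\|\le M}} |\hat{\psi}_U(\bm)|\,\Big|\int_{B_T}e(\bm\cdot\bs)\,d\bs\Big|.
\]
Now I would invoke \equ{eq:zeroFourier} to bound the first summand by $C'b^{k-1}|\det L|\,|B_T|/M$ for a constant $C'$ depending only on $k$, and \equ{eq:FourierCoeffs} to bound $|\hat{\psi}_U(\bm)|\le k2^{k+1}(1+2b)^k|\det L|\,r_{\mathcal{T}}(\bm)$ inside the sum. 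To fold the first error term into the common prefactor $(1+2b)^k|\det L|$ of \equ{eq:NTestimate}, one only needs the crude bound $b^{k-1}\le(1+2b)^k$ (immediate in either case $b\le1$ or $b\ge1$). This produces exactly the right-hand side of \equ{eq:NTestimate}, with a constant depending only on $k$, as an upper bound for $\big|\int_{B_T}\psi_U(\pi(\bx+\bt))\,d\bt - |U||B_T|\big|$.

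Running the identical argument on $\varphi_U$ gives the same bound for $\big|\int_{B_T}\varphi_U(\pi(\bx+\bt))\,d\bt - |U||B_T|\big|$. Since $N_T(U,\bx)$ lies between these two Birkhoff integrals, and each of them is within the asserted distance of $|U||B_T|$, the same is true of $N_T(U,\bx)$, which is \equ{eq:NTestimate}. I do not expect a genuine obstacle here: all the hard estimates are inherited from Theorem \ref{thm:extremal}, and what remains is careful bookkeeping --- in particular, passing the affine translation $\mathbf{c}$ through to the Fourier side (where it is harmless) and absorbing the exponent $b^{k-1}$ coming from \equ{eq:zeroFourier} into $(1+2b)^k$. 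The only point worth stating explicitly is that the Fourier expansions of $\varphi_U$ and $\psi_U$ are finite and hold pointwise, so that the termwise integration over $B_T$ requires nothing beyond linearity.
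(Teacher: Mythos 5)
Your proposal is correct and follows essentially the same route as the paper: sandwich $N_T(U,\bx)$ between the Birkhoff integrals of $\varphi_U$ and $\psi_U$ from Theorem~\ref{thm:extremal}, integrate the finite Fourier expansion term by term over $B_T$, and control the zero and nonzero frequencies by \equ{eq:zeroFourier} and \equ{eq:FourierCoeffs} respectively. The only cosmetic differences are that the paper disposes of the affine shift by replacing $\bx$ with $\bx-\by_0$ rather than translating the polynomials, and it leaves the absorption of $b^{k-1}$ into $(1+2b)^k$ implicit, whereas you state it explicitly.
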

\begin{proof}
If $\tilde{L}(\by) = L(\by)+\by_0$, we may replace $\bx$ with
$\bx-\by_0$ to assume that $\tilde{L}= L$, so that Theorem
\ref{thm:extremal} applies.
For $M\ge 1$ we have from Theorem \ref{thm:extremal}
\eq{eq:majorantPair}{
\chi_{U}^{\T}({\bf x}) - |U|\leq \psi_{U}({\bf
  x})-|U|\leq\dfrac{C'b^{k-1}|\det L|}{M}+\sum_{\substack{{\bf m}\in\Z^k \sm \{0\} \\
    \|L^{t} {\bf m}\| \leq M}}\hat{\psi}_{U}({\bf m})e({\bf m\cdot
  x}),
}
for some constant $C'$ which depends only on $k$.
By integrating both sides of
\equ{eq:majorantPair} over $B_T -\bx$ we find  that
\begin{align*}
N_T(U,{\bf x})-|U||B_T|&\le
\left|\frac{C' b^{k-1}|\det L| \, |B_T|}{M} +\int_{B_T}\sum_
{\substack{{\bf m}\in
    \Z^k \sm \{0\} \\ \|L^{t} {\bf m}\| \leq M}}
\hat{\psi}_U(\bm) e({\bf
    m}\cdot {\bf s})d{\bf s}\right|\\
&\leq  \frac{C'b^{k-1}|\det L| \, |B_T|}{M}+\sum_{\substack{{\bf m}\in
    \Z^k \sm \{0\} \\ \|L^{t} {\bf m}\| \leq M}}
\left|\hat{\psi}_U({\bf
    m})\right|\cdot\left|\int_{B_T}e({\bf m}\cdot {\bf s})~d{\bf
    s}\right| \\ 
& \stackrel{\equ{eq:FourierCoeffs}}{\leq} C (1+2b)^{k}|\det L| \left ( \frac{|B_T|}{M} + 
\sum_{\substack{{\bf m}\in
    \Z^k \sm \{0\} \\ \|L^{t} {\bf m}\| \leq M}} r_{\mathcal{T}}({\bf
  m})
\cdot\left|\int_{B_T}e({\bf m}\cdot {\bf s})~d{\bf
    s}
\right| \right),
\end{align*}
where $C \df \max (C', k2^{k+2})$. 
For a lower bound on
$N_T(U,{\bf x})-|U||B_T|$ we use $\varphi_{U}({\bf x})\leq \chi_{U}^{\T}({\bf x})$ in a
similar way.
\end{proof}

Specializing to aligned boxes we obtain a generalization of the
Erd\H{o}s-Tur\'{a}n inequality.
\begin{cor}\name{lem: erdos turan}
Let the notation be as in Theorem \ref{thm: equidistribution for
  BL}. Suppose $U \subset \R^k$ is an aligned box. Then there is a
positive constant $C$ (depending only on $k$) such that for any
$M\in\N$ and
${\bf x}\in\R^k$ we have that
\eq{eq: erdos turan}{
\Big|N_T(U,{\bf x})-|U||B_T|\Big|\le
C\left(\frac{|B_T|}{M}+\sum_{\substack{\mathbf{m}
      \in \Z^k \sm \{0\} \\ \|\mathbf{m} \| \leq M}}r({\bf m})\left|\int_{B_T}e({\bf
      m}\cdot {\bf t})~d{\bf t}\right|\right),
}
where
\eq{eq: defn r}{
r(\mathbf{m}) \df \prod_{i=1}^k \min \left(1, \frac{1}{|m_i|}
\right).
}
\end{cor}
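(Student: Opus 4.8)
The plan is to deduce Corollary~\ref{lem: erdos turan} directly from Theorem~\ref{thm:NTestimate}, applied to the special case in which the parallelotope is a coordinate box and the affine isomorphism is a pure translation.

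First I would normalise the given aligned box. Write $U = \pi\big([a_1,b_1]\times\cdots\times[a_k,b_k]\big)$ with $b_i - a_i < 1$ for every $i$; if $a_i = b_i$ for some $i$ the box is degenerate and both sides of \equ{eq: erdos turan} vanish, so there is nothing to prove and we may assume $a_i < b_i$. Put $\mathbf{c} = \big(\tfrac{a_i+b_i}{2}\big)_{i=1}^k$ and $\beta_i = \tfrac{b_i-a_i}{2}\in(0,\tfrac12)$, so that $U = \tilde L B$ with $\tilde L(\by) = \by + \mathbf{c}$ and $B = \prod_{i=1}^k[-\beta_i,\beta_i]$; in the notation of Theorem~\ref{thm:extremal} this has $b \df \max_i\beta_i < \tfrac12$. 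Since $b_i - a_i < 1$, the projection $\pi$ is injective on $U$, which is exactly the hypothesis needed to invoke Theorem~\ref{thm:NTestimate} for this $U$, $B$, $\tilde L$.

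The linear part of the translation $\tilde L$ is $L = \mathrm{id}$, whence $|\det L| = 1$, $L^t = \mathrm{id}$, and the basis $\mathcal{T}$ appearing in Theorem~\ref{thm:NTestimate} is $(L\ebf_1,\dots,L\ebf_k) = (\ebf_1,\dots,\ebf_k)$. For this $\mathcal{T}$, comparing \equ{eq: defn r general} with \equ{eq: defn r} gives $r_{\mathcal{T}}(\mathbf{m}) = \prod_{i=1}^k\min\big(1,1/|\ebf_i\cdot\mathbf{m}|\big) = \prod_{i=1}^k\min(1,1/|m_i|) = r(\mathbf{m})$, while the summation condition $\|L^t\mathbf{m}\|\le M$ becomes $\|\mathbf{m}\|\le M$. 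Substituting into \equ{eq:NTestimate} and bounding the prefactor $C(1+2b)^k|\det L| \le C\,2^k$ — which depends only on $k$, since $2b < 1$ — and absorbing this into a new constant depending only on $k$ produces precisely \equ{eq: erdos turan}.

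No step here is genuinely hard: once one observes that an aligned box is a parallelotope aligned with the standard basis and realised by a pure translation, the whole statement is just a rewriting of Theorem~\ref{thm:NTestimate}. The only items deserving a line of care are the degenerate-box case (handled above) and the injectivity of $\pi|_U$, which is built into the definition of an aligned box and is exactly the hypothesis required to apply Theorem~\ref{thm:NTestimate}.
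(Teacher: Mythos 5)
Your proof is correct and is in substance identical to the paper's, which likewise applies Theorem~\ref{thm:NTestimate} with $L=\mathrm{id}$ (so $r_{\mathcal{T}}=r$, $\|L^{t}\bm\|=\|\bm\|$, $|\det L|=1$) and observes $b\le 1/2$; you merely spell out the affine-translation bookkeeping that the paper leaves implicit. One small wording slip: in the degenerate case $a_i=b_i$ it is not true that \emph{both} sides of \equ{eq: erdos turan} vanish (the right side contains the strictly positive term $|B_T|/M$); rather the left side vanishes because $|U|=0$ and, since a Diophantine $V$ is totally irrational, the orbit meets the codimension-one set $\pi(U)$ in a null set so $N_T(U,\bx)=0$, and the inequality holds trivially.
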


\begin{proof}
In this case $L$ is the identity matrix, so that $B=U$ and $b\leq 1/2$.
\end{proof}

\ignore{
This follows from standard arguments and employs well-known properties of the auxiliary
functions above, cf. \cite{Vaaler1985}, \cite[Theorem
1.21]{DT}.
\begin{proof}[Sketch of proof]

For $x\in\R$ let
\begin{align*}
H(x)&=\left(\frac{\sin(\pi x)}{\pi}\right)^2\left(\sum_{n\in\Z}\frac{\sgn (n)}{(x-n)^2}+\frac{2}{x}\right),\\
K(x)&=\left(\frac{\sin(\pi x)}{\pi x}\right)^2, \text{ and}\\
J(x)&=\frac{1}{2}H'(x).
\end{align*}
Also for each $M\in\N$ let
$$K_M(x)=MK(Mx) \ \mathrm{and} \ J_M(x)=MJ(Mx)$$
and define
\begin{equation*}
j_M(x)=\sum_{m\in\Z}J_{M+1}(x+m).
\end{equation*}
These functions have extremely nice properties which make them ideal
for the type of application which we are considering. They have been
carefully studied by Vaaler \cite{Vaaler1985}. The properties
which we need are that
\begin{equation}\label{jprop1}
j_M(x)=\sum_{m=-M}^M\hat{J}_{M+1}(m)e(mx),
\end{equation}
and that for any interval $I\subset [-1/2,1/2)$,
\begin{equation}\label{jprop2}
\left|\chi_I(x)-\chi_I\ast j_M(x)\right|\le \frac{1}{M+1}\sum_{m=-M}^M\hat{K}_{M+1}(m)C_me(mx),
\end{equation}
with $|C_m|\le 1$ for each $m$. Here we are treating $\chi_I$ as a
periodic function modulo $1$.  Identity (\ref{jprop1}) follows from
the Poisson summation formula and \cite[Lemma 1.23]{DT},
and identity (\ref{jprop2}) follows from \cite[Theorem
1.25]{DT}.

By translating $U$ and $\mathbf{x}$ we can
assume with no loss of generality that $U$ is centered at the origin,
i.e. $U = \pi \left(I_1 \times \cdots \times
I_k \right),$ where
$I_i \df (-\vre_i,\vre_i), \, i=1, \ldots, k$.
Let \[f_i(t_i) \df \chi_{I_i}\ast j_M(t_i).\]
By applying the inequality
\begin{equation*}
\left|\prod_{i=1}^kb_i-\prod_{i=1}^ka_i\right|\le \sum_{\varnothing\not=
  J\subset\{1,\ldots ,k\}}\prod_{i\not\in J}|a_i|\prod_{i\in
  J}|b_i-a_i|
\end{equation*}
(which can be proved by induction)
we find that
\begin{align*}
\chi_U(\pi ({\bf t}))
& = \prod_{i=1}^k \chi_{I_i}(t_i) \\
&=\prod_{i=1}^kf_i(t_i)+ c(M,{\bf
  t})\sum_{\varnothing \not= J\subset\{1,\ldots ,k\}}\prod_{i\in
  J}\left|\chi_{I_i}(t_i)-f_i(t_i)\right|\\
&=\prod_{i=1}^kf_i(t_i)+ c(M,{\bf
  t})\left(\prod_{i=1}^k\left(1+\left|\chi_{I_i}(t_i)-f_i(t_i)\right|\right)-1\right),
\end{align*}
where $|c(M,{\bf {t}})|\le 1$.
Now we integrate both sides over $B_T-{\bf x}$ and apply
(\ref{jprop2}) to the right hand side to obtain
\begin{align*}
&\Big|N_T(U,{\bf x})-|U||B_T|\Big|\le\left|\int_{B_T}\left(
    \prod_{i=1}^kf_i(x_i+t_i)-|U|\right) \, d{\bf t}\right|\\
&\qquad
+\int_{B_T}\left(\prod_{i=1}^k
  \left(1+\frac{1}{M+1}\sum_{m_i=-M}^MC_{m_i,i} \, \hat{K}_{M+1}(m_i) \,
    e(m_i(x_i+t_i))\right)-1\right) \, d{\bf
  t}.
\end{align*}
The remainder of the proof is straightforward. For the
details we refer the reader to the proof of \cite[Theorem 1.21]{DT}.
\end{proof}
}

We will need the following estimate for the integrals appearing on the
right-hand-side of \equ{eq:NTestimate}:
\begin{prop}\name{prop: another bound}
There is a constant $C$ (depending only on $d, k$ and the choice of
Lebesgue measure on $V$) such that
\eq{eq: one more bound}{
\left|\int_{B_T}e({\bf m}\cdot {\bf s})~d{\bf s}\right| \leq C
\prod_{i=1}^d\frac{1}{|{\bf m}\cdot {\bf v}_i|}.
}
\end{prop}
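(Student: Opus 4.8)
The plan is to compute the integral $\int_{B_T} e(\mathbf{m}\cdot\mathbf{s})\,d\mathbf{s}$ explicitly by changing variables so that $B_T$ becomes a cube. Since $B_T = \{\sum a_i \mathbf{v}_i : \max_i |a_i| \le T\}$, writing $\mathbf{s} = \sum_{i=1}^d a_i \mathbf{v}_i$ identifies $B_T$ with the cube $[-T,T]^d$ in the $a$-coordinates; the Lebesgue measure $d\mathbf{s}$ on $V$ pulls back to a constant multiple (depending only on the choice of basis $\mathbf{v}_1,\dots,\mathbf{v}_d$ and the normalization of Lebesgue measure on $V$) of $da_1\cdots da_d$. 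Under this substitution $\mathbf{m}\cdot\mathbf{s} = \sum_{i=1}^d a_i (\mathbf{m}\cdot\mathbf{v}_i)$, so the integral factors as a product of one-dimensional integrals:
\[
\left|\int_{B_T} e(\mathbf{m}\cdot\mathbf{s})\,d\mathbf{s}\right|
= c \prod_{i=1}^d \left|\int_{-T}^{T} e\big(a_i (\mathbf{m}\cdot\mathbf{v}_i)\big)\,da_i\right|,
\]
where $c$ depends only on $d$, $k$, and the Lebesgue normalization.

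**Key step.** Each one-dimensional factor is evaluated directly: for $\alpha \ne 0$,
\[
\left|\int_{-T}^{T} e(\alpha a)\,da\right| = \left|\frac{e(\alpha T) - e(-\alpha T)}{2\pi i \alpha}\right| = \frac{|\sin(2\pi \alpha T)|}{\pi |\alpha|} \le \frac{1}{\pi|\alpha|}.
\]
Applying this with $\alpha = \mathbf{m}\cdot\mathbf{v}_i$ for each $i$ (note that $\mathbf{m}\cdot\mathbf{v}_i \ne 0$ for all $i$, since otherwise the bound \equ{eq: one more bound} is vacuous, or one can reduce to that case) gives
\[
\left|\int_{B_T} e(\mathbf{m}\cdot\mathbf{s})\,d\mathbf{s}\right| \le \frac{c}{\pi^d} \prod_{i=1}^d \frac{1}{|\mathbf{m}\cdot\mathbf{v}_i|},
\]
which is exactly the claimed inequality with $C = c/\pi^d$.

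**Main obstacle.** There is essentially no obstacle here — the estimate is a routine change of variables followed by the elementary bound $|\sin|\le 1$. The only points requiring a word of care are: (i) making the dependence of the constant $C$ transparent, i.e. that it absorbs the Jacobian of the linear map $a \mapsto \sum a_i\mathbf{v}_i$ together with the chosen normalization of Lebesgue measure on $V$ (hence the parenthetical "depending only on $d,k$ and the choice of Lebesgue measure on $V$" in the statement); and (ii) handling the degenerate case $\mathbf{m}\cdot\mathbf{v}_i = 0$ for some $i$, in which case the right-hand side of \equ{eq: one more bound} should be interpreted as $+\infty$ and the inequality holds trivially. Neither point presents any real difficulty.
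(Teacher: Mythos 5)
Your proof is correct and follows essentially the same route as the paper's: change of variables to turn $B_T$ into the cube $[-T,T]^d$, factor the integral into $d$ one-dimensional integrals, evaluate each as $|\sin(2\pi(\mathbf{m}\cdot\mathbf{v}_i)T)|/(\pi|\mathbf{m}\cdot\mathbf{v}_i|)$, and bound $|\sin|\le 1$. Your remarks on the degenerate case $\mathbf{m}\cdot\mathbf{v}_i = 0$ and the provenance of the constant are fine and do not change the argument.
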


\begin{proof}
For a constant $C_1$ depending on the choice of Lebesgue measure on $V$,
we have:
\begin{align}
\left|\int_{B_T}e({\bf m}\cdot {\bf s})~d{\bf s}\right| 
&=C_1 \prod_{i=1}^d\left|\int_{-T}^Te(({\bf m}\cdot {\bf v}_i)s_i)~ds_i\right|\nonumber\\
&=C_1 \prod_{i=1}^d\frac{|\sin(2\pi({\bf m}\cdot {\bf v}_i)T)|}{\pi|{\bf m}\cdot {\bf v}_i|}\nonumber\\
&\leq \frac{C_1}{\pi^d}\prod_{i=1}^d\frac{1}{|{\bf m}\cdot {\bf v}_i|}.\nonumber
\end{align}
\end{proof}

\begin{proof}[Proof of Theorem \ref{thm: equidistribution for BL}]
Let ${\bf v}$ be a Diophantine vector in the subspace spanned by ${\bf
  v}_1,\ldots , {\bf v}_d$, and write
${\bf v}=\sum_{i=1}^dx_i{\bf v_i}.$
Fix $c, s$ as in \equ{eq: defn Diophantine vector}, and let $s'>s$.
If ${\bf m}\in\Z^k$ satisfies
\[\max_{1\le i\le d}|{\bf m}\cdot {\bf v}_i|\le \|{\bf m}\|^{-s'}\]
then, for all but finitely many $\mathbf{m}$,
\[|{\bf m}\cdot {\bf v}|\le \left(\sum_{i=1}^d|x_i|\right)\|{\bf
  m}\|^{-s'}
\leq c \|\mathbf{m}\|^s
.\]
Thus for some $c_1>0$ we have
\eq{eq: thus}{
\max_{1\le i\le d}|{\bf m}\cdot {\bf v}_i|\ge c_1 \|{\bf
  m}\|^{-s'}~\text{ for all }~{\bf m}\in\Z^k.}
We will apply
Corollary \ref{lem: erdos turan} with
\eq{eq: floor}{
M=\lfloor T^{\delta} \rfloor, \ \ \mathrm{where} \
\delta = \frac{1}{d+s'+1}.
}

Assume that the maximum in \equ{eq: thus} is attained for $i=1$.
It follows 
that for some $c_4, c_3, c_2>0$,
\begin{align}
\left|\int_{B_T}e({\bf m}\cdot {\bf t})~d{\bf
    t}\right|& =c_2 \left|\int_{[-T,T]^d}e\left({\bf m}\cdot \left(\sum
      t_i {\bf v}_i \right) \right)~d{\bf t}\right|\nonumber\\
&= c_2 \, \prod_{i=1}^d\left|\int_{-T}^Te(({\bf m}\cdot {\bf
    v}_i)t_i)~dt_i\right| \nonumber\\
&\leq c_2 \, \frac{|\sin(2\pi({\bf m}\cdot {\bf v}_1)T)|}{\pi|{\bf m}\cdot
  {\bf v}_1|} \, \left( 2T \right)^{d-1} \, \nonumber \\
&\leq c_3 \, \frac{T^{d-1}}{|\mathbf{m} \cdot \mathbf{v}_1|}
\stackrel{\equ{eq: thus}, \equ{eq: floor}}{\leq} c_4 \, T^{d-1+s'\delta}. \nonumber
\end{align}
Therefore
\begin{align*}
\sum_{0<\|{\bf m}\| \le M}r({\bf
    m})\left|\int_{B_T}e({\bf m}\cdot {\bf t})~d{\bf t}\right|&\leq
c_4 \,
T^{d-1+ s'\delta}\sum_{0<\|{\bf m}\|\le M}r({\bf m}) 
\\
&\le c_4 \, M^d \, T^{d-1+s'\delta} 
\stackrel{\equ{eq: floor}}{\leq} c_5 
\, T^{d-\delta}.
\end{align*}
It is clear that the constants $c_5, \delta$ do not depend
on $U$ or ${\bf x}$. Thus the theorem follows from Corollary \ref{lem:
  erdos turan}.
\end{proof}

\begin{remark}
The proof shows that if $V$ is Diophantine with corresponding constant
$s$, then $\delta$ can be taken to be any number smaller than $\frac{d+1}{d+s+1}.$
\end{remark}

\begin{proof}[Proof of Theorem
\ref{thm: equidistribution for BDD}]
For a fixed $\delta$, let $\vre = \delta/d$ and let $C_1, C_2, C_3$ be the
constants $C$ appearing in \equ{eq: for strongly Diophantine},
\equ{eq:NTestimate} and
\equ{eq: one more bound} respectively. 
Let $L: \R^k \to \R^k$ be the linear isomorphism mapping
$\mathbf{e}_i$ to $\mathbf{v}_i$, $i=1, \ldots, k$. Any $U$ which is a
parallelotope aligned with $\mathcal{T}$ is of the form $U = \pi \circ
\til{L}(B)$, where $\til L$ is an affine isomorphism whose
linear part is $L$ and $B=\prod [-b_{i},b_{i}]$ .\newline
Let $\|\cdot\|_{2}$ be the Euclidean norm on $\R^k$. If the largest side length of $U$ is $\eta$, then 
\[
	\eta=2\displaystyle\max_{i} b_{i}\|{\bf v}_{i}\|_{2}.
\]
In particular $\eta\geq 2 b\min\|{\bf v}_{i}\|_{2}$ where $b=\max b_{i}$.
There is a constant
$\lambda$, which depends only on the $\mathbf{v}_1, \ldots, \mathbf{v}_k$, such that 
 \[\{\bm\in\Z^{k}: \|L^{t} {\bf m}\| \leq M\}\subset
 \{\bm\in\Z^{k}: \|{\bf m}\| \leq \lambda M\}.\] 
Applying Proposition \ref{prop: another bound}, we find that for any
$M>0$, the right
hand side of \equ{eq:NTestimate} is bounded above by 
\[C_2 |\det L| (1+\eta/\min\|{\bf v}_{i}\|_{2})^{k}\left( \frac{|B_T|}{M}+C_3 \sum_{\substack{{\bf m}\in
    \Z^k \sm \{0\} \\ \|{\bf m}\| \leq \lambda M}}r_{\mathcal{T}}({\bf
  m})\prod_{i=1}^d\frac{1}{|{\bf m}\cdot {\bf v}_i|}\right).\]
Now taking $M=\left\lfloor T^d \right\rfloor$, and using our strongly
Diophantine hypothesis, gives the required bound, with 
\[ 
	C=C_1 C_2 C_3  |\det L|\lambda^{\delta/d}\max\{1,1/\min\|{\bf v}_{i}\|_{2}\}^{k}.
\] 
\end{proof}



\section{Diophantine approximation to subspaces}
\name{section: reduction}
The main result of this section shows that the Diophantine properties
stated in the introduction hold almost surely. More precisely,
properties of $d$-tuples of vectors in $\R^k$ hold almost everywhere
with respect to Lebesgue measure on $\times_1^d \, \R^k \cong \R^{kd}$, and
properties of vector spaces hold almost everywhere with respect to the
smooth measure class on the Grassmannian variety.

The fact that almost every vector is Diophantine follows from the Borel-Cantelli Lemma --- or see \cite{DV} for a
stronger statement. For the extension to strongly
Diophantine vectors, we employ ideas of Schmidt
\cite{Schmidt1964}:
\begin{prop} \name{prop: required bound}
Almost every $\mathbf{v}_1, \ldots, \mathbf{v}_d$ is strongly Diophantine with respect to any basis $\mathcal{T} = (\mathbf{t}_1, \ldots, \mathbf{t}_k)$ for
$\R^k$ having the property that for each $i\in\{d+1,\ldots , k\}$, there is a $j$ for which $\mathbf{t}_i$ is a multiple of $\E_j$.
\end{prop}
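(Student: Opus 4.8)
The plan is to regard $\mathbf{v}_1,\ldots,\mathbf{v}_d$ as Lebesgue‑random vectors and to combine a first–moment estimate with the Borel--Cantelli lemma, adapting the method of Schmidt \cite{Schmidt1964}. Since being strongly Diophantine with respect to a fixed $\mathcal{T}$ is a local condition on $(\mathbf{v}_1,\ldots,\mathbf{v}_d)\in\R^{kd}$ and $\R^{kd}$ is a countable union of products of unit cubes, it suffices to fix a unit cube $Q\subset\R^k$ and prove the conclusion for a.e.\ $(\mathbf{v}_1,\ldots,\mathbf{v}_d)\in Q^d$. I would also first trim the family of bases: as $\mathbf{t}_{d+1},\ldots,\mathbf{t}_k$ are linearly independent and each is a nonzero multiple of a standard basis vector, there are a set $J\subset\{1,\ldots,k\}$ with $|J|=k-d$ and nonzero reals $(c_j)_{j\in J}$ such that $\{\mathbf{t}_{d+1},\ldots,\mathbf{t}_k\}=\{c_j\E_j:j\in J\}$ (so $\mathbf{t}_i\cdot\mathbf{m}=c_j m_j$ for the corresponding $j$); bounding the first $d$ factors of $r_{\mathcal{T}}(\mathbf{m})$ by $1$ then gives
\[
r_{\mathcal{T}}(\mathbf{m})\ \le\ \prod_{j\in J}\min\!\Big(1,\tfrac{1}{|c_j m_j|}\Big)\ \le\ \Big(\max_{j\in J}\max(1,|c_j|^{-1})\Big)^{k-d}\,\tilde r_J(\mathbf{m}),\qquad \tilde r_J(\mathbf{m})\df\prod_{j\in J}\min\!\Big(1,\tfrac{1}{|m_j|}\Big),
\]
with the convention $1/0\df+\infty$. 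As there are only finitely many such $J$, it suffices to show that for each $J$ with $|J|=k-d$, for a.e.\ $(\mathbf{v}_1,\ldots,\mathbf{v}_d)\in Q^d$ and every $\vre>0$, the sum $S_M\df\sum_{0<\|\mathbf{m}\|\le M}\tilde r_J(\mathbf{m})\prod_{i=1}^d|\mathbf{m}\cdot\mathbf{v}_i|^{-1}$ is $O(M^\vre)$; and since $S_M$ is nondecreasing in $M$, it is enough to control $S_{2^n}$.

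The analytic core is a first–moment bound on a truncated sum, needed because $S_M$ itself has infinite integral over $Q^d$. For $c>0$ put $\tilde S^{(c)}_M\df\sum_{0<\|\mathbf{m}\|\le M}\tilde r_J(\mathbf{m})\prod_{i=1}^d\min\!\big(c^{-1}\|\mathbf{m}\|^k,|\mathbf{m}\cdot\mathbf{v}_i|^{-1}\big)$. For fixed $\mathbf{m}\ne0$ one has, by splitting off the coordinate of $\mathbf{m}$ of largest absolute value, $\int_Q f(\mathbf{m}\cdot\mathbf{v})\,d\mathbf{v}=\int_\R f(y)\rho_{\mathbf{m}}(y)\,dy$ with $0\le\rho_{\mathbf{m}}\le C/\|\mathbf{m}\|$ supported in $\{|y|\le C\|\mathbf{m}\|\}$; a direct computation then yields $\int_Q\min(c^{-1}\|\mathbf{m}\|^k,|\mathbf{m}\cdot\mathbf{v}|^{-1})\,d\mathbf{v}\ll_c\log(e\|\mathbf{m}\|)/\|\mathbf{m}\|$. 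Using independence of the $\mathbf{v}_i$ and that $\tilde r_J$ does not depend on them,
\[
\int_{Q^d}\tilde S^{(c)}_M\,d\mathbf{v}_1\cdots d\mathbf{v}_d\ \ll_c\ \sum_{0<\|\mathbf{m}\|\le M}\tilde r_J(\mathbf{m})\left(\frac{\log(e\|\mathbf{m}\|)}{\|\mathbf{m}\|}\right)^{\!d}.
\]
Splitting $\mathbf{m}\in\Z^k$ into its coordinates in $J$ and its $d$ remaining coordinates, and using $\sum_{\mathbf{n}\in\Z^d,\,\|\mathbf{n}\|\le M}\max(\|\mathbf{n}\|,a)^{-d}\ll\log(eM)$ uniformly over $a\ge1$ (together with $\sum_{0<\|\mathbf{n}\|\le M}\|\mathbf{n}\|^{-d}\ll\log(eM)$) and $\sum_{|m|\le M}\min(1,1/|m|)\ll\log(eM)$, the right–hand side is $\ll(\log(eM))^{k+1}$. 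Hence $\int_{Q^d}\tilde S^{(c)}_{2^n}\ll_c n^{k+1}$, and Markov's inequality gives $\big|\{\mathbf{v}\in Q^d:\tilde S^{(c)}_{2^n}>2^{n\vre}\}\big|\ll_c n^{k+1}2^{-n\vre}$, which is summable in $n$.

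To finish, let $E_c\df\{\mathbf{v}\in Q^d:|\mathbf{m}\cdot\mathbf{v}_i|\ge c\|\mathbf{m}\|^{-k}\text{ for all }1\le i\le d\text{ and all }\mathbf{m}\in\Z^k\sm\{0\}\}$. On $E_c$ the truncation is inactive, so $S_{2^n}=\tilde S^{(c)}_{2^n}$ there; combining this with the previous paragraph and Borel--Cantelli, the set $\limsup_n\{S_{2^n}>2^{n\vre}\}$ meets $E_c$ in a null set, i.e.\ is contained in $Q^d\sm E_c$ up to measure zero. A routine Borel--Cantelli argument---the same one showing that a.e.\ vector in $Q$ is Diophantine with exponent $k$---gives $|Q^d\sm E_c|\to0$ as $c\to0$, so $\limsup_n\{S_{2^n}>2^{n\vre}\}$ is null. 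Thus for a.e.\ $(\mathbf{v}_1,\ldots,\mathbf{v}_d)\in Q^d$ we get $S_{2^n}\le2^{n\vre}$ for all large $n$, hence $S_M\ll M^\vre$ for all $M$ (with a constant depending on $\mathbf{v}$ and $\vre$); intersecting over $\vre\in\{1/\ell:\ell\in\N\}$, over the finitely many $J$, and over the countably many cubes $Q$, and undoing the reduction of the first paragraph, completes the proof.

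The main obstacle is precisely the non–integrability of $|\mathbf{m}\cdot\mathbf{v}_i|^{-1}$, which makes the integral of $S_M$ over $Q^d$ infinite and so rules out a one–shot first–moment argument. This is what forces the truncation $\min(c^{-1}\|\mathbf{m}\|^k,\cdot)$ and, crucially, the order of operations: one must apply Borel--Cantelli for each fixed $c$ and only afterwards let $c\to0$, using that almost every vector is Diophantine to make the exceptional sets $Q^d\sm E_c$ shrink to measure zero. (The same reason explains why this argument yields only an $M^\vre$ bound rather than the polylogarithmic bound of Proposition \ref{prop: strengthening}, which requires a second–moment input.)
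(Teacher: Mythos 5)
Your proof is correct and complete, but it resolves the key technical difficulty in a genuinely different way than the paper. Both arguments are adaptations of Schmidt's first-moment method and both must cope with the fact that $|\mathbf{m}\cdot\mathbf{v}_i|^{-1}$ is not integrable over a cube. The paper handles this by inserting a logarithmic damping factor $\bigl(-\log\min(1/2,|\mathbf{m}\cdot\mathbf{v}|)\bigr)^{-(1+\vre)}$ inside the integrand and a compensating weight $(\log m_i)^{-(2+\vre)}$ in the outer sum, shows that the resulting doubly-weighted series $S(\mathbf{v}_1,\ldots,\mathbf{v}_d)$ is a.s. finite outright (not just along a subsequence), and then strips away the damping by bounding the supremum $P(M)$ of the logarithmic factors using a standard Borel--Cantelli Diophantine estimate. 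You instead truncate $|\mathbf{m}\cdot\mathbf{v}_i|^{-1}$ at $c^{-1}\|\mathbf{m}\|^k$, take a first moment of the truncated sum, run Markov plus Borel--Cantelli along the dyadic scales $M=2^n$, and only then send $c\to 0$ using the a.e.\ Diophantine property to see the truncation was inactive; you also absorb the $m_j=0$ cases by convention rather than by the paper's $2^k$-fold partition. The two approaches are comparable in length and difficulty, but the paper's variant delivers the stronger polylogarithmic bound $(\log M)^{k+2d+\delta}$ (Proposition~\ref{prop: strengthening}) at no extra cost, whereas your variant gives only $M^\vre$, which is what the present proposition requires. Your closing remark that the polylogarithmic improvement "requires a second-moment input" is therefore not quite accurate as a description of the paper's argument --- it too is a first-moment argument, just normalized differently --- but this is a side comment and not a flaw in your proof.
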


\begin{proof}
Fix $\vre>0,$ let $R_1,\ldots , R_d$ be cubes in $\R^k$
of sidelength $1$, and for each $1\le i\le d$ and ${\bf
  m}\in\Z^k\sm\{{\bf 0}\}$ let
\[I_{R_{i}}({\bf m}) \df \int_{R_i}\frac{d{\bf v}}{|{\bf m}\cdot {\bf
    v}|(-\log \min(1/2,|{\bf m}\cdot {\bf v}|))^{1+\vre}}.\]
We estimate this integral by using the change of variables $\mathbf{u}
= \mathbf{u}(\mathbf{v})$, where
\[u_i={\bf m}\cdot {\bf v},\quad u_j=v_j \text{ for }1\le j\le k,~j\not= i.\]
The Jacobian determinant of this transformation is $1/m_i$. If we write $R_i'$ for
the image of $R_i$ in the
${\bf u}$ coordinate system then it is clear that for $j\not= i$ the
$u_j$ coordinates of two points in $R_i'$ cannot differ by more than
$1$. Using this fact we have
\begin{align*}
I_{R_i}({\bf m})\le \frac{2}{m_i}\int_0^{1/2}\frac{du_i}{u_i|\log
  u_i|^{1+\vre}}+\frac{1}{m_i(\log
  2)^{1+\vre}}\int_{1/2}^{1/2+m_i}\frac{du_i}{u_i}\leq
c_1 \frac{\log(m_i)}{m_i},
\end{align*}
where $c_1$ depends only on $\vre$.
Thus we have
\begin{align*}
\sum_{m_1=1}^\infty\cdots\sum_{m_d=1}^\infty \frac{I_{R_1}({\bf
    m})\cdots I_{R_d}({\bf m})}{(\log m_1)^{2+\vre}\cdots (\log
  m_d)^{2+\vre}}  \leq c_2
\end{align*}
with $c_2$ depending on
$\vre$ but not on $\mathbf{m}$.
On interchanging the
orders of integration and summation this implies that for almost every
$({\bf v}_1,\ldots , {\bf v}_d)\in R_1\times\cdots\times R_d$,
\begin{align}
S({\bf v}_1,\ldots , {\bf v}_d) \df \sum_{m_1=1}^\infty\cdots\sum_{m_d=1}^\infty
\prod_{i=1}^d\frac{1}{|{\bf m}\cdot {\bf v}_i|(\log
  m_i)^{2+\vre}(-\log \min(1/2,|{\bf m}\cdot {\bf
    v}_i|))^{1+\vre}}
\label{sumbnd1}
\end{align}
is finite and independent of $m_{d+1},m_{d+2},\ldots , m_k\in\Z$. Since the location of
the cubes $R_1,\ldots , R_d$ was arbitrary, $S({\bf v}_1,\ldots , {\bf
  v}_d) < \infty$
for almost every $({\bf v}_1,\ldots , {\bf
  v}_d)\in (\R^k)^d.$
By grouping together the choices for $m_{d+1}, \ldots, m_k$, we obtain
\begin{align}
&\sum_{\substack{{\bf m}\in\Z^k\\0<m_1,\ldots , m_k \le M}}
r_{\mathcal{T}} ({\bf m})\prod_{i=1}^d\frac{1}{|{\bf m}\cdot {\bf
    v}_i|}\nonumber\\
    &\nonumber\\
&\qquad\qquad\leq C(\log M)^{k-d}(\log M)^{d(2+\vre)}P(M)S({\bf
  v}_1,\ldots , {\bf v}_d),\label{sumbnd2}
\end{align}
where
\[P(M)=\prod_{i=1}^d\max_{1\le m_1,\ldots ,m_k\le
  M}(-\log \min(1/2,|{\bf m}\cdot {\bf v}_i|))^{1+\vre}.\]
In the inequality in (\ref{sumbnd2}) we are using the fact that for each $i\in\{d+1,\ldots , k\}$, the quantity $\mathbf{t}_i\cdot {\bf m}$ is always a fixed multiple of $m_j$ for some $j$.

By a standard application of the Borel-Cantelli Lemma,
for almost every ${\bf v}\in\R^k$ there is a constant $c=c({\bf
  v})>0$ such that
\begin{equation*}
|{\bf m}\cdot {\bf v}|\ge\frac{c}{M^{2k}}~\text{ for all }~{\bf
  m}\in\Z^k~\text{ with }~0<\|{\bf m}\| \le M.
\end{equation*}
Thus for almost every ${\bf v}_1,\ldots , {\bf v}_d$ and for any
$\delta>0$ we have that (\ref{sumbnd2}) is bounded above by a constant
times $(\log M)^{k+2d+\delta}$.

Finally we can estimate
\[\sum_{0<\|{\bf m}\| \le M}r_{\mathcal{T}} ({\bf m})\prod_{i=1}^d\frac{1}{|{\bf m}\cdot {\bf v}_i|}\]
by partitioning the sum into $2^k$ subsets of points ${\bf m}$,
according to which components of ${\bf m}$ are $0$. To each one of
these subsets we may then apply the above arguments to obtain the
required bound.
\end{proof}

As a corollary of our proof, the conclusions of Theorems \ref{thm:
  equidistribution for BL} and \ref{thm: equidistribution for BDD} can
be considerably strengthened, as follows. 

\begin{prop} \name{prop: strengthening}
For almost every $\mathbf{v}_1, \ldots, \mathbf{v}_d$,
and any basis $\mathcal{T}$ as in Proposition \ref{prop: required bound}, for any $\delta >0$ there is $c>0$ so that
\eq{hsumeqn1}{
\sum_{0<\|{\bf m}\| \le M}r_{\mathcal{T}}({\bf m})\prod_{i=1}^d
\frac{1}{|{\bf m}\cdot {\bf v}_i|}\leq c\, (\log M)^{k+2d+\delta}.
}
Under this condition, the error terms on the right hand sides of
\equ{eq: equidistribution} and \equ{eq: equidistribution2} can be
replaced by $C(\log T)^{k+2d+\delta}$. 
\end{prop}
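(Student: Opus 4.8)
The plan is to prove Proposition~\ref{prop: strengthening} in two stages: first the Diophantine sum estimate \equ{hsumeqn1}, and then the promotion of this estimate to the strengthened error terms in \equ{eq: equidistribution} and \equ{eq: equidistribution2}. The first stage is essentially already carried out inside the proof of Proposition~\ref{prop: required bound}: the chain of inequalities culminating in the paragraph ``Thus for almost every ${\bf v}_1,\ldots,{\bf v}_d$ and for any $\delta>0$ we have that (\ref{sumbnd2}) is bounded above by a constant times $(\log M)^{k+2d+\delta}$'' is exactly \equ{hsumeqn1} restricted to the positive orthant $0<m_1,\ldots,m_k\le M$. So the first step is to observe that the final partitioning argument in that proof---splitting the sum over $0<\|{\bf m}\|\le M$ into the $2^k$ sign/support patterns and applying the orthant bound to each---upgrades the orthant estimate to the full sum \equ{hsumeqn1} on the nose. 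This requires essentially no new work; one just needs to note that $r_{\mathcal T}$ and the product $\prod 1/|{\bf m}\cdot{\bf v}_i|$ depend only on $|m_j|$ and $|{\bf m}\cdot{\bf v}_i|$, so each orthant contributes a bound of the same shape.

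The second stage is to re-run the proofs of Theorems~\ref{thm: equidistribution for BL} and \ref{thm: equidistribution for BDD} with the polynomial Diophantine input replaced by the logarithmic one. For Theorem~\ref{thm: equidistribution for BDD} this is immediate: its proof only ever used the strongly Diophantine hypothesis through the bound $\sum_{0<\|{\bf m}\|\le M} r_{\mathcal T}({\bf m})\prod_i 1/|{\bf m}\cdot{\bf v}_i|\le C_1 M^\vre$, applied with $M=\lfloor T^d\rfloor$. Substituting \equ{hsumeqn1} in its place, with $M=\lfloor T^d\rfloor$ one gets $\sum\le c(\log(T^d))^{k+2d+\delta}\le c'(\log T)^{k+2d+\delta}$, and the term $|B_T|/M=O(T^{d}/T^{d})=O(1)$ is dominated; feeding this through Theorem~\ref{thm:NTestimate} exactly as before yields \equ{eq: equidistribution2} with $T^\delta$ replaced by $C(1+\eta)^k(\log T)^{k+2d+\delta}$, hence the claimed form after absorbing the $\eta$-dependence.

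For Theorem~\ref{thm: equidistribution for BL} one must be slightly more careful, since its proof used only a single Diophantine vector ${\bf v}\in V$ and estimated $\left|\int_{B_T}e({\bf m}\cdot{\bf t})\,d{\bf t}\right|$ via a single coordinate, picking up a factor $T^{d-1+s'\delta}$. To get the logarithmic conclusion I would instead invoke Proposition~\ref{prop: another bound}, which bounds $\left|\int_{B_T}e({\bf m}\cdot{\bf t})\,d{\bf t}\right|$ by $C\prod_{i=1}^d 1/|{\bf m}\cdot{\bf v}_i|$ uniformly in $T$; combining this with Corollary~\ref{lem: erdos turan} (the case $L=\mathrm{id}$, so $r_{\mathcal T}=r$ and $\mathcal T$ is the standard basis, which indeed falls under Proposition~\ref{prop: required bound}) gives
\[
\Big|N_T(U,{\bf x})-|U||B_T|\Big|\le C\Big(\frac{|B_T|}{M}+\sum_{0<\|{\bf m}\|\le M}r({\bf m})\prod_{i=1}^d\frac{1}{|{\bf m}\cdot{\bf v}_i|}\Big).
\]
Choosing $M=\lfloor T^d\rfloor$ makes $|B_T|/M=O(1)$, and \equ{hsumeqn1} bounds the sum by $c(\log T)^{k+2d+\delta}$, which is the asserted strengthening of \equ{eq: equidistribution}.

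The only genuine subtlety---the ``hard part,'' such as it is---is bookkeeping: one must check that the standard basis (for Theorem~\ref{thm: equidistribution for BL}) and the mixed basis $\mathcal T=({\bf v}_1,\ldots,{\bf v}_d,\E_{j_{d+1}},\ldots)$ (for Theorem~\ref{thm: equidistribution for BDD}) both satisfy the hypothesis of Proposition~\ref{prop: required bound}, namely that ${\bf t}_i$ is a multiple of some $\E_j$ for $i>d$, so that \equ{hsumeqn1} is genuinely available; and that the exceptional null set for the vectors ${\bf v}_1,\ldots,{\bf v}_d$ can be taken independent of $\delta$ (take a countable intersection over $\delta=1/n$) and independent of the finitely many admissible choices of $\mathcal T$ that arise. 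All of this is routine, and I would write it as a short remark immediately following the proof of Proposition~\ref{prop: required bound}, reusing its notation rather than repeating the argument.
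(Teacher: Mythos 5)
Your proposal is correct and follows essentially the same route as the paper: note that \equ{hsumeqn1} was already established in the proof of Proposition~\ref{prop: required bound}, then take $M=\lfloor T^d\rfloor$ and feed \equ{hsumeqn1} together with Proposition~\ref{prop: another bound} into Theorem~\ref{thm:NTestimate} (or equivalently Corollary~\ref{lem: erdos turan} for the aligned-box case). The only minor imprecision is the aside about ``finitely many admissible choices of $\mathcal T$''---the exceptional null set in Proposition~\ref{prop: required bound} does not depend on $\mathcal T$ at all (only on $\vre$), since the proof bounds the first $d$ factors of $r_{\mathcal T}(\bm)$ trivially by $1$; but this does not affect the argument.
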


\begin{proof}
The bound \equ{hsumeqn1} was already proved above. For the rest of the claim, take $M=T^d$ and use \equ{hsumeqn1} and
Proposition \ref{prop: another bound}  in Theorem \ref{thm:NTestimate}.
\end{proof}

To conclude this section we mention the following easy fact:
\begin{prop}\name{prop: Diophantine and strongly}
If $\mathbf{v}_1, \ldots, \mathbf{v}_d$ are strongly Diophantine then
each $\mathbf{v}_i$ is Diophantine.
\end{prop}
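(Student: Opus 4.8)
The plan is to extract from the strong Diophantine inequality \equ{eq: for strongly Diophantine} a lower bound of the form \equ{eq: defn Diophantine vector} for each individual vector $\mathbf{v}_i$. Fix $i \in \{1, \ldots, d\}$; it suffices to show there are $c, s > 0$ with $|\mathbf{m} \cdot \mathbf{v}_i| \geq c \|\mathbf{m}\|^{-s}$ for all nonzero $\mathbf{m} \in \Z^k$. The idea is that in the sum \equ{eq: for strongly Diophantine}, all terms are positive, so each individual term is bounded above by the whole sum, which in turn is at most $C M^{\vre}$ once $\|\mathbf{m}\| \leq M$.

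First I would take $\vre = 1$ (say) in \equ{eq: for strongly Diophantine}, obtaining a constant $C > 0$ such that for all $M > 0$,
\[
\sum_{\substack{\mathbf{m} \in \Z^k \sm \{0\} \\ \|\mathbf{m}\| \leq M}} r_{\mathcal{T}}(\mathbf{m}) \prod_{j=1}^d \frac{1}{|\mathbf{m} \cdot \mathbf{v}_j|} \leq C M.
\]
Now fix a nonzero $\mathbf{m} \in \Z^k$ and set $M = \|\mathbf{m}\|$; then $\mathbf{m}$ is one of the indices in the sum, so its term alone is bounded by $C \|\mathbf{m}\|$:
\[
r_{\mathcal{T}}(\mathbf{m}) \prod_{j=1}^d \frac{1}{|\mathbf{m} \cdot \mathbf{v}_j|} \leq C \|\mathbf{m}\|.
\]
In particular $\frac{1}{|\mathbf{m} \cdot \mathbf{v}_i|} \leq C \|\mathbf{m}\| \cdot \big( r_{\mathcal{T}}(\mathbf{m}) \prod_{j \neq i} \frac{1}{|\mathbf{m} \cdot \mathbf{v}_j|} \big)^{-1}$, so I need lower bounds on $r_{\mathcal{T}}(\mathbf{m})$ and on each factor $\frac{1}{|\mathbf{m} \cdot \mathbf{v}_j|}$ that are only polynomially small in $\|\mathbf{m}\|$. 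For $r_{\mathcal{T}}(\mathbf{m}) = \prod_{\ell=1}^k \min(1, |\mathbf{t}_\ell \cdot \mathbf{m}|^{-1})$, each factor is at least $\min(1, (\|\mathbf{t}_\ell\| \|\mathbf{m}\|)^{-1}) \gg \|\mathbf{m}\|^{-1}$, so $r_{\mathcal{T}}(\mathbf{m}) \gg \|\mathbf{m}\|^{-k}$. Similarly, for $j \neq i$ one has the trivial upper bound $|\mathbf{m} \cdot \mathbf{v}_j| \leq \|\mathbf{v}_j\|_1 \|\mathbf{m}\|$, hence $\frac{1}{|\mathbf{m} \cdot \mathbf{v}_j|} \gg \|\mathbf{m}\|^{-1}$; note that $\mathbf{m} \cdot \mathbf{v}_j \neq 0$ because the left-hand side of \equ{eq: for strongly Diophantine} is finite (so no factor in the product vanishes for any $\mathbf{m}$ in the range — in particular, no nonzero integer vector is orthogonal to any $\mathbf{v}_j$). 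Combining, $r_{\mathcal{T}}(\mathbf{m}) \prod_{j \neq i} \frac{1}{|\mathbf{m} \cdot \mathbf{v}_j|} \gg \|\mathbf{m}\|^{-k - (d-1)}$, and therefore
\[
\frac{1}{|\mathbf{m} \cdot \mathbf{v}_i|} \ll \|\mathbf{m}\| \cdot \|\mathbf{m}\|^{k + d - 1} = \|\mathbf{m}\|^{k+d},
\]
which rearranges to $|\mathbf{m} \cdot \mathbf{v}_i| \gg \|\mathbf{m}\|^{-(k+d)}$, i.e. $\mathbf{v}_i$ is Diophantine with exponent $s = k + d$.

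There is essentially no serious obstacle here — the only point requiring a moment's care is justifying that $\mathbf{m} \cdot \mathbf{v}_j \neq 0$ for all $j$ and all nonzero integer $\mathbf{m}$, which, as noted, is forced by the finiteness of the sum in \equ{eq: for strongly Diophantine} (were some $\mathbf{m} \cdot \mathbf{v}_j = 0$, the corresponding summand would be infinite, contradicting the bound $C M^{\vre}$). Everything else is the elementary observation that a single positive term is dominated by a convergent positive sum, together with trivial polynomial bounds on the auxiliary factors $r_{\mathcal{T}}(\mathbf{m})$ and $|\mathbf{m} \cdot \mathbf{v}_j|^{-1}$.
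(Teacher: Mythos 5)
Your proof is correct and uses essentially the same mechanism as the paper's: isolate the single summand indexed by $\mathbf{m}$ in \equ{eq: for strongly Diophantine}, bound $r_{\mathcal{T}}(\mathbf{m})$ from below by $\gg \|\mathbf{m}\|^{-k}$ and each $|\mathbf{m}\cdot\mathbf{v}_j|$ ($j\neq i$) from above by $\ll\|\mathbf{m}\|$ via Cauchy--Schwarz, and conclude a polynomial lower bound on $|\mathbf{m}\cdot\mathbf{v}_i|$. The only presentational difference is that the paper phrases the argument as a contradiction (supposing infinitely many bad $\mathbf{m}$ for a fixed exponent $s>k+d-1$), whereas you run it directly and also explicitly justify $\mathbf{m}\cdot\mathbf{v}_j\neq 0$ from finiteness of the sum; both give the Diophantine property with exponent on the order of $k+d$.
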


\begin{proof}
Suppose that $\mathbf{v}_1, \ldots, \mathbf{v}_d$ are strongly Diophantine with respect to $\mathcal{T} = (\mathbf{t}_1, \ldots,
\mathbf{t}_k)$, let $s>k+d-1$, and let $i \in \{1, \ldots, d\}$. Suppose by contradiction that there are infinitely
many vectors $\mathbf{m} \in \Z^k$ so that $|\mathbf{m} \cdot
\mathbf{v}_i| < \frac{1}{\|\mathbf{m}\|^s}.$ If $\mathbf{m}$ is one such vector then setting $M =
\|\mathbf{m}\| $ and using Cauchy-Schwarz we find, for each $j \neq i$,
$$
|\mathbf{m} \cdot \mathbf{v}_j | \leq M \|\mathbf{v}_j\|.
$$
Noting that
$
r_{\mathcal{T}}(\mathbf{m}) \geq \prod_{i=1}^k \frac{1}{\|\mathbf{t}_i\| \cdot \|\mathbf{m}\|}
$
gives
$$
r_{\mathcal{T}}(\mathbf{m}) \prod_{i=1}^d \frac{1}{|\mathbf{m} \cdot \mathbf{v}_i|}
\geq
M^{-k} \, \left(\prod_{i=1}^k \frac{1}{\|\mathbf{t}_i\|}\right)
\left(\prod_{j \neq i}\frac{1}{M\|
    \mathbf{v}_j\|}\right) \|\mathbf{m}\|^s\ge CM^{s-k-d+1}.
$$
This holds along a sequence of $M\rar\infty$. However for some $\vre
>0$ this contradicts \equ{eq: for strongly Diophantine}.
\end{proof}

\section{Proofs of Theorem \ref{thm: main, BL} and
  \ref{thm: main, BDD}(1),(2) }
\name{section: proof BL}

\begin{proof}[Proof of Theorem \ref{thm: main, BL}]
Let $V$ be a Diophantine
subspace, and let
$\mathbf{v}_1, \ldots, \mathbf{v}_d$ be a basis for $V$. Let
$\mathcal{S}$ be a section which is $(k-d)$-dimensionally open and
bounded, with $\dim_M \partial \mathcal{S} < k-d$. In light of
Corollary \ref{cor: wnlg}, and since $\dim_M$ does not change upon replacing
$\mathcal{S}$ by its image under a bilipschitz map, there is no loss
of generality in assuming that $\mathcal{S}$ is linear. Let
$\mathcal{U}_1$ be a closed ball around $0$ in $V$,
satisfying (i) of \S \ref{subsection: sections}, and define $W$ via
\equ{eq: defn V}. Then $W$ is bi-Lipschitz equivalent to $\mathcal{U}
\times \mathcal{S}$ and hence, by \cite[Formulae 7.2 and 7.3]{Falconer}, $\dim_M \partial W <
k$. Thus the Theorem follows from Corollaries \ref{cor: dynamical for BL} and \ref{cor: for
main, BL}.
\end{proof}

\begin{proof}[Proof of Theorem \ref{thm: main, BDD}(1)]
Let $\mathbf{v}_1, \ldots, \mathbf{v}_d$ satisfy the conclusion of
Proposition \ref{prop: required bound}, and for $i=d+1, \ldots, k$, let
$\mathbf{v}_i \in
\{\mathbf{e}_1, \ldots, \mathbf{e}_k\} $ such that $\mathcal{T}=(\mathbf{v}_1,
\ldots, \mathbf{v}_k)$ is a basis of $\R^k$.  Also 
let $V = \spa (\mathbf{v}_1, \ldots, \mathbf{v}_d)$.  We need to show that for any
linear section $\mathcal{S}$ in a space $L$ transverse to $V$, such that
$\dim \partial \, S = k-d-1$, and any
$\bx \in \T^k$, the corresponding net is BD to a lattice. To this end we
will apply Corollaries \ref{cor: dynamical for BDD} and \ref{cor: for
  main, BDD} . 
Let $B$ be a ball in $L$ such that $\pi$ is injective on $B$, and sets
$\mathcal{U}_1$ and $\mathcal{U}_2$ satisfying conditions (i) and
(ii) of \S\ref{subsection:
  sections} for $B'$. Also let $L' \df \spa (\mathbf{v}_{d+1},
\ldots, \mathbf{v}_k)$, and let $B'$ be a ball in $L'$ such that
$\pi$ is injective on $B'$. Then $B'$ is a good section, let $\mathcal{U}'_1, \mathcal{U}'_2$ be
the corresponding sets as in \S \ref{subsection: sections}.

Suppose first that  $B$ is
small enough so that \equ{eq: hypothesis 1} holds. Then we can assume
with no loss of generality that $\mathcal{S}$ is contained in
$B'$. This in turn shows that the hypotheses of Corollaries \ref{cor:
  for main, BDD} and \ref{cor: dynamical
  for BDD} are satisfied, and $Y$ is BD to a lattice. 

Now suppose \equ{eq: hypothesis 1} does not hold. Then we can
partition $\mathcal{S}$ into smaller 
sets $\mathcal{S}^{(1)},\ldots , \mathcal{S}^{(r)}$ with equal
volume and $\dim_M \partial \mathcal{S}^{(i)}=k-d-1$, such that the
corresponding sets $\mathcal{U}_1^{(i)}$ satisfy 
\equ{eq: hypothesis 1}. 
Now repeating the previous argument separately to each
$\mathcal{S}^{(i)}$, we see that the corresponding net is BD to a
fixed lattice $L$. Note that the lattice is the same because each
$\mathcal{S}_i$ has the same volume. Now the result follows via 
Proposition \ref{prop: replace with rational subspace}.
\end{proof}

\begin{proof}[Proof of Theorem \ref{thm: main, BDD}(2)]
Suppose
$\mathcal{S}$ is a box with sides parallel to the
coordinate axes; that is, there is $J \subset \{1, \ldots, k\}, \, |J|
= k-d$, such that $\mathcal{S}$ is the projection under $\pi$ of an
aligned box in the space $V_J \df \spa (\E_j: j \in J)$.
As above, we can use Proposition \ref{prop: replace with rational
  subspace} to assume that $\pi$ is injective on a subset of $V_J$
covering $\mathcal{S}$. 
According to Proposition \ref{prop: required bound}, for almost every
choice of $\mathbf{v}_1, \ldots, \mathbf{v}_d$, the space $V = \spa
(\mathbf{v}_i)$ is strongly Diophantine with respect to the basis
$$\mathcal{T} \df \{\mathbf{v}_i: i=1, \ldots, d\} \cup
\{\E_j: j \in J\}.$$
As in the
preceding proof, choose a neighborhood $\mathcal{U}_1$
of $0$ in $V$ satisfying property (i) of \S\ref{subsection:
  sections} which is a box. Then  the set $W$ defined by
\equ{eq: defn V} is a parallelotope aligned with $\mathcal{T}$. 
According to Theorem \ref{thm: equidistribution for BDD}, \equ{eq: need to
  check, BDD} holds, and we can apply Corollary \ref{cor: dynamical for BDD}.
\end{proof}

\section{Irregularities of distribution}\name{section: irregularities}
In this section we will fix $1< d <k$ and let $\mathcal{G}$ denote the Grassmannian
variety of $d$-dimensional subspaces of $\R^k$. We denote by
$\mc{G}(\Q)$ the subset of rational subspaces.
We will fix a totally irrational $k-d$ dimensional subspace $W \subset
\R^k$, and let $\mathcal{S}$ be the image under $\pi$ of a subset of
$W$ which is open and bounded. There is a dense $G_\delta$ subset of
$V \in \mathcal{G}$ for which $\mc{S}$ is a good section for the
action of $V$ on $\T^k$; indeed, by the discussion of
\S\ref{subsection: sections},  this holds whenever $V$ and $W$ are
transverse to each other and $V$ is totally irrational.

If $Q \in \mc{G}(\Q)$ then any orbit  $Q.\bx$ is compact; further if $Q$ is
transverse to $W$ then $Q.\bx \cap \mathcal{S}$ is a finite set
for every $\bx \in \T^k$.
We say that $\mathcal{S}$ and $Q$ are  {\em not correlated} if there
are $\bx_1, \bx_2 \in \T^k$ such that
\eq{eq: defn not correlated}{
\# \, \left(Q.\bx_1 \cap \mc{S} \right)  = \# \, \left(Q.\bx_1 \cap
  \overline{\mc{S}} \right) \neq \# \, \left( Q.\bx_2 \cap
\mc{S}\right) = \# \, \left(Q.\bx_2 \cap \overline{\mc{S}} \right)
}
(here $\overline{\mc{S}}$ denotes the closure of $\mc{S}$).
We say that $\mathcal{S}$
is {\em typical} if there is a dense set of $Q \in \mc{G}$ for which
$\mc{S}$ and $Q$ are not correlated.

It is not hard to find typical $\mathcal{S}$:
\begin{prop}\name{prop: bad rational dense}
Let $r=k-d$ and let $W$ be a totally irrational $r$-dimensional
subspace of $\R^k$. Let
$\mathbf{w}_1, \ldots, \mathbf{w}_r$ be a basis for $W$ and for $\A = (a_1, \ldots,
a_r) \in (0,1)^r, \B=(b_1, \ldots, b_r) \in (0,1)^r$ let
$$P(\A, \B) \df
\pi \left(\left\{\sum_1^r  t_i \mathbf{w}_i : t_i \in (a_i, a_i+b_i)
\right\}\right).
$$
Then the set of $(\A, \B)$ for which $P(\A, \B)$ is not correlated
with any rational subspace, and hence typical, is of full measure and residual in
$(0,1)^{2r}.$

\end{prop}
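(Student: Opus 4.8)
The plan is to exploit that there are only countably many rational subspaces, and to show that each one excludes only a Lebesgue-null, nowhere dense set of parameters $(\A,\B)$. Two elementary facts first: the set $\mathcal{G}(\Q)$ of rational $d$-subspaces of $\R^k$ is countable, and those $Q\in\mathcal{G}(\Q)$ transverse to $W$ (i.e.\ with $Q\cap W=\{0\}$) are dense in $\mathcal{G}$, since the set of $Q\in\mathcal{G}$ transverse to $W$ is a nonempty Zariski-open (hence analytically open and dense) subset and $\mathcal{G}(\Q)$ is dense in $\mathcal{G}$. It therefore suffices to produce a set $E\subseteq(0,1)^{2r}$, of full measure and residual, with the property that for every $(\A,\B)\in E$ and every rational $Q$ transverse to $W$, the sets $P(\A,\B)$ and $Q$ are not correlated: then the collection of $Q$ uncorrelated with $P(\A,\B)$ is dense in $\mathcal{G}$, which is exactly the definition of typical. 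Writing $\mathcal{B}_Q$ for the set of $(\A,\B)$ for which $P(\A,\B)$ is correlated with $Q$, we take $E=(0,1)^{2r}\setminus\bigcup_Q\mathcal{B}_Q$, the union ranging over the countably many transverse rational $Q$; it then suffices to show each $\mathcal{B}_Q$ is Lebesgue-null and nowhere dense.

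Fix such a $Q$. Since $Q$ is rational, $Q+\Z^k$ is closed in $\R^k$, so the image of $\Z^k$ under $\R^k\to\R^k/Q$ is a full-rank lattice; identifying $\R^k/Q$ with $\R^r$ via the basis which is the image of $\mathbf{w}_1,\dots,\mathbf{w}_r$ (a basis because $\R^k=Q\oplus W$), call this lattice $\Lambda$. A direct computation shows that, under the resulting surjection $\T^k\to\R^k/(Q+\Z^k)\cong\R^r/\Lambda$, if $\bx$ has image $\bx^*$ then
\[
\#\bigl(Q.\bx\cap P(\A,\B)\bigr)=\#\bigl((\bx^*+\Lambda)\cap R(\A,\B)\bigr),\qquad R(\A,\B)\df\prod_{i=1}^r(a_i,a_i+b_i),
\]
and $Q.\bx$ avoids $\partial P(\A,\B)$ exactly when $(\bx^*+\Lambda)\cap\partial R(\A,\B)=\varnothing$, which holds for $\bx^*$ outside a set of measure zero. (We assume here $\pi$ is injective on the closed box, in line with the standing conventions on sections; otherwise one replaces point counts by counts of $(W\cap\Z^k)$-cosets meeting the box, and the argument below is unchanged.) As $P(\A,\B)$ is open, for such $\bx$ one also has $\#(Q.\bx\cap\overline{P(\A,\B)})=\#(Q.\bx\cap P(\A,\B))$. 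Hence $P(\A,\B)$ and $Q$ are correlated precisely when $\bx^*\mapsto g(\bx^*)\df\sum_{\lambda\in\Lambda}\chi_{R(\A,\B)}(\bx^*+\lambda)$ is almost everywhere constant on $\R^r/\Lambda$.

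Now detect non-constancy of $g$ via Fourier analysis on $\R^r/\Lambda$: up to a fixed nonzero constant, the Fourier coefficient of $g\in L^2(\R^r/\Lambda)$ at $\xi$ in the dual lattice $\Lambda^*$ equals $\widehat{\chi_{R(\A,\B)}}(\xi)=\prod_{i=1}^r\int_{a_i}^{a_i+b_i}e(-\xi_it)\,dt$, whose $i$-th factor is $b_i\neq0$ if $\xi_i=0$, and $e(-\xi_ia_i)\bigl(1-e(-\xi_ib_i)\bigr)/(2\pi i\xi_i)$ — vanishing exactly when $\xi_ib_i\in\Z$ — if $\xi_i\neq0$. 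So $g$ is a.e.\ constant iff every nonzero $\xi\in\Lambda^*$ has some coordinate $i$ with $\xi_i\neq0$ and $\xi_ib_i\in\Z$. Fixing one nonzero $\xi^{(0)}\in\Lambda^*$ (which exists since $\Lambda^*$ has full rank) and putting $S=\{i:\xi^{(0)}_i\neq0\}\neq\varnothing$, we get
\[
\mathcal{B}_Q\subseteq\bigcup_{i\in S}\bigl\{(\A,\B)\in(0,1)^{2r}:\xi^{(0)}_ib_i\in\Z\bigr\},
\]
a finite union of affine hyperplanes, hence Lebesgue-null and closed with empty interior. Taking the union over the countably many transverse rational $Q$, the set $\bigcup_Q\mathcal{B}_Q$ is Lebesgue-null and meager, so its complement $E$ is of full measure and residual, which is what we wanted.

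I expect the only genuine work to be bookkeeping: fixing the identification $\T^k\to\R^r/\Lambda$ and verifying the orbit-count formula carefully (including the harmless injectivity point), after which the Fourier computation and the Baire-category and measure-theoretic assembly are routine.
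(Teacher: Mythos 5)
Your proof is correct and, more to the point, it fills in a gap the paper leaves open. Both you and the paper make the same first reduction: fix a rational $Q$ transverse to $W$, pass to the orbit torus $Q'\cong\R^r/\Lambda$, and observe that $\mathcal{S}$ and $Q$ are correlated precisely when the orbit-count function is essentially constant on $Q'$. From there the routes diverge. The paper's proof is a sketch: it asserts ``clearly this property is destroyed if we vary $\mathcal{S}$ slightly in the direction orthogonal to $Q$'' and claims (without detail) that the correlation locus in $(\A,\B)$-space is a proper submanifold. You instead identify the orbit-count function with $\sum_{\lambda\in\Lambda}\chi_{R(\A,\B)}(\cdot+\lambda)$ and detect its non-constancy via Fourier analysis on $\R^r/\Lambda$: constancy forces $\widehat{\chi}_{R(\A,\B)}(\xi)=0$ for every nonzero $\xi\in\Lambda^*$, and evaluating a single such $\xi^{(0)}$ (using the product formula for a box) confines $\mathcal{B}_Q$ to the finite union of affine hyperplanes $\{\xi^{(0)}_ib_i\in\Z\}$, $i\in S$. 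This buys you a concrete, global description of the bad locus rather than a local transversality claim, so nullity and nowhere-density come out for free, and you get the stronger statement that the residual full-measure set of $(\A,\B)$ avoids correlation with \emph{every} transverse rational $Q$ simultaneously (which is what the paper actually needs, even though typicality only requires a dense set). Your caveat about injectivity of $\pi$ on the closed box is also appropriate --- total irrationality of $W$ means $W^\perp\cap\Z^k=\{0\}$ but does not by itself rule out $W\cap\Z^k\neq\{0\}$, so either one restricts to small boxes (as the paper implicitly does for sections) or one replaces point counts by coset counts as you indicate. In short, the two proofs share the same reduction but your Fourier-analytic step is a genuine and arguably cleaner replacement for the paper's perturbation heuristic.
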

\begin{proof}
It is enough to show that for a fixed $Q$, the set of $\A, \B$ for
which $P(\A, \B)$ is correlated with $Q$ has zero measure and is a
submanifold of dimension less than $2r$ in $[0,1]^{2r}$. To see this,
define two functions $F, \overline{F}$ on $\T^k$, by
$$
F(\bx) \df  \# \, \left( Q.\bx \cap \mc{S}\right), \ \
\overline{F}(\bx) \df \#  \, \left( Q.\bx \cap \overline{\mc{S}} \right).
$$
We always have $F(\bx) \leq \overline{F}(\bx)$, and $F(\bx) =
\overline{F}(\bx)$ unless $Q.\bx$ intersects the boundary of
$\mc{S}$. So if \equ{eq: defn not correlated} fails then $F(\bx)$
always has the same value, for the values of $\bx$ for which $Q.\bx
\cap \partial \, \mc{S} = \varnothing.$

Note that the values of $F, \overline{F}$ are constant along orbits of
$Q$. The space of orbits for the $Q$-action is itself a compact torus
$Q'$ of dimension $r
$. Let $\pi' : \T^k \to Q'$ be the projection
mapping a point to its orbit.
The discussion in the previous paragraph
shows that the requirement that $\mathcal{S}$ and $Q$ are correlated
is equivalent to the requirement that the interior of $\mc{S}$
projects onto a dense open subset of $Q'$ with fibers of constant cardinality. Clearly this
property is destroyed if we vary $\mc{S}$ slightly in the direction
orthogonal to $Q$. More precisely, for any $\A$ and $\B$, there
is a small neighborhood $\mathcal{U}$ such that which the set of $\A',
\B'$ in $\mathcal{U}$ for which \equ{eq: defn not correlated} fails is
a proper submanifold of zero measure. This proves the claim.


\combarak{I suspect that all
  boxes and balls are typical.
By the preceding discussion this is the same as saying that for fixed
$\mc{S}$, for a dense set of $Q$, the projection of $\mc{S}$ onto the
torus $Q'$ of $Q$-orbits, is essentially non-constant. I am sure this
is true but can you prove it?}
\end{proof}

By similar arguments one can show that almost every ball, ellipsoid,
etc., is typical.
\begin{prop}\name{prop: Baire}
If $\mc{S}$ is a bounded open set whose boundary is of zero measure
(w.r.t. the Lebesgue measure on the subspace $W$), and $\mc{S}$ is typical, then there is a dense $G_\delta$ subset of $V$ for which,
for every $\bx \in \T^k$,
the separated net $Y_{\mc{S},\bx}$ is not BD to a lattice.
\end{prop}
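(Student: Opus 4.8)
The plan is to combine Laczkovich's criterion (Theorem~\ref{thm: Laczkovich2}) with the ``not correlated'' mechanism underlying typicality, via a Baire category argument over the Grassmannian $\mc G$. Fix a base point $\bx_0=\mathbf 0$, identify each totally irrational $d$-subspace $V$ transverse to $W$ with $\R^d$ isometrically (the choice is immaterial by Proposition~\ref{prop: linear maps do not change}), and write $Y_V\df Y_{\mc S,\bx_0}$ for the resulting net, which is a separated net since $\mc S$ is then a good section. Because $\mc S$ is $(k-d)$-dimensionally open and the $V$-action on $\T^k$ is minimal, Proposition~\ref{prop: point immaterial} reduces us to showing that $Y_V$ is not BD to a lattice for a dense $G_\delta$ set of such $V$. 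By Theorem~\ref{thm: Laczkovich2}(3), $Y_V$ is BD to \emph{some} lattice exactly when there are $c,\lambda>0$ with $\Disc_{Y_V}(A,\lambda)\le c\,|(\partial A)^{(1)}|$ for all measurable $A$; so I want, for a dense $G_\delta$ set of $V$ and for \emph{every} $c$ and \emph{every} $\lambda$, a set $A$ violating this. Euclidean balls will do, and the argument will not require knowing the asymptotic density of $Y_V$.

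The heart is a local construction near rational subspaces. Let $Q\in\mc G(\Q)$ be transverse to $W$ and not correlated with $\mc S$, with witnesses $\bx_1,\bx_2\in\T^k$ as in \equ{eq: defn not correlated}, so $Q.\bx_i\cap\partial\mc S=\varnothing$ while $p_i\df\#(Q.\bx_i\cap\mc S)$ satisfy $p_1\ne p_2$, hence $|p_1-p_2|\ge1$. As $Q$ is rational, $\Lambda\df Q\cap\Z^k$ is a full-rank lattice in $Q$, each orbit $Q.\bx_i$ is a translate of the torus $Q/\Lambda$, and the $Q$-visit set $\{\mathbf p\in Q:\pi(\mathbf p+\bx_i)\in\mc S\}$ is $\Lambda$-periodic with exactly $p_i$ points per fundamental domain; in particular it meets the Euclidean $R$-ball of $Q$ in $p_i\kappa R^d+O(R^{d-1})$ points, for a constant $\kappa=\kappa(Q)>0$. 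Since $Q.\bx_i$ is compact and misses $\partial\mc S$, there is $\vre_0>0$ such that $Q.\bx_i\cap\mc S$ lies at distance $\ge\vre_0$ from $\partial\mc S$ and $Q.\bx_i\setminus\overline{\mc S}$ lies at distance $\ge\vre_0$ from $\overline{\mc S}$. Now fix a target scale $R$ and pick $\delta=\delta(Q,R)>0$ small enough that whenever $V$ is totally irrational, transverse to $W$, and within $\delta$ of $Q$ in $\mc G$, and $\mathbf p_i\in V$ is chosen (using minimality of the $V$-action) with $\pi(\mathbf p_i)$ within $\vre_0/3$ of $\bx_i$, then for all $\mathbf w\in V$ with $\|\mathbf w\|\le R$ the point $\pi(\mathbf p_i+\mathbf w)$ lies within $\vre_0$ of $\pi(\mathbf w_Q+\bx_i)\in Q.\bx_i$, where $\mathbf w\mapsto\mathbf w_Q$ is the near-identity linear map $V\to Q$ along $W$ (this only needs $V$ to be $C^0$-close to $Q$ on the $R$-ball, which makes $\|\mathbf w-\mathbf w_Q\|=O(\delta R)$). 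By the $\vre_0$-gap, $\pi(\mathbf p_i+\mathbf w)\in\mc S\iff\pi(\mathbf w_Q+\bx_i)\in\mc S$, and comparing with the $\Lambda$-periodic count on $Q$ while absorbing the $O(\delta R)=O(1)$ distortion of the ball into the error, one obtains
\[
\#\bigl(Y_V\cap D(\mathbf p_i,R)\bigr)=p_i\kappa R^d+O(R^{d-1}),
\]
with the implied constant depending only on $Q$ and $\mc S$.

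Applying this with $i=1,2$ produces balls $D_1,D_2$ of radius $R$ with $\#(Y_V\cap D_i)=\rho_i R^d+O(R^{d-1})$ and $|\rho_1-\rho_2|=\kappa|p_1-p_2|\ge\kappa$. Since $\max(|\rho_1-y|,|\rho_2-y|)\ge|\rho_1-\rho_2|/2$ for every real $y$, we get $\max_i\Disc_{Y_V}(D_i,\lambda)\ge\tfrac\kappa2 R^d-O(R^{d-1})$ for every $\lambda>0$, whereas $|(\partial D_i)^{(1)}|=O(R^{d-1})$; hence there is $R_0=R_0(Q,c)$ so that for $R\ge R_0$ at least one of $D_1,D_2$ satisfies $\Disc_{Y_V}(D_i,\lambda)>c\,|(\partial D_i)^{(1)}|$ --- simultaneously for all $\lambda>0$. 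To conclude, for $c\in\N$ put $\mc O_c\df\bigcup_Q\{V:V\text{ totally irrational, transverse to }W,\ \dist_{\mc G}(V,Q)<\delta(Q,R_0(Q,c))\}$, the union over all rational $Q$ transverse to $W$ and not correlated with $\mc S$. Each $\mc O_c$ is the intersection of the dense $G_\delta$ set of totally irrational $V$ transverse to $W$ with the open set $\bigcup_Q B_{\mc G}(Q,\delta(Q,R_0(Q,c)))$, which is dense because typicality of $\mc S$ makes the admissible $Q$ dense in $\mc G$; so each $\mc O_c$ is a dense $G_\delta$, and for every $V\in\mc O_c$ and every $\lambda>0$ there is a measurable set violating Laczkovich's inequality with constant $c$. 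Therefore $\bigcap_{c\in\N}\mc O_c$ is a dense $G_\delta$ on which $Y_V$ is not BD to any lattice, and Proposition~\ref{prop: point immaterial} upgrades this to every $\bx\in\T^k$.

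The step I expect to be the main obstacle is the uniform shadowing estimate: showing that once $V$ is within $O(1/R)$ of the rational subspace $Q$, the visit pattern of the $V$-orbit inside a ball of radius $R$ coincides, up to a boundary error $O(R^{d-1})$ independent of the particular $V$, with the $\Lambda$-periodic pattern of the compact orbit $Q.\bx_i$. This is precisely where the hypothesis that $\mc S$ is not correlated with $Q$ is used --- it furnishes the margin $\vre_0$ by which $Q.\bx_i$ avoids $\partial\mc S$ --- and where one must keep the order of quantifiers straight, so that $R_0(Q,c)$ and all implied constants are chosen before, and independently of, $V$.
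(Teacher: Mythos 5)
Your proposal is correct and follows essentially the same strategy as the paper's proof: approximate a generic $V$ by rational subspaces $Q$ not correlated with $\mc S$, use the two witness points $\bx_1,\bx_2$ to build regions on which the local density of $Y_V$ differs by a definite amount independent of $\lambda$, compare against $|(\partial A)^{(1)}|$ via Laczkovich's criterion~(3), and organize the resulting open neighborhoods of the admissible $Q$'s into a dense $G_\delta$. The only cosmetic differences are that you use Euclidean balls (with an $O(R^{d-1})$ lattice-counting error) where the paper uses integer dilations of a fundamental parallelotope for $Q\cap\Z^k$ (so the count is exact), and you invoke Proposition~\ref{prop: point immaterial} to dispatch the ``for all $\bx$'' quantifier where the paper handles it inline by translating the test region along a $V$-orbit approaching the desired base point.
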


\begin{proof}
Let $Q_1, Q_2, \ldots$ be a list of rational subspaces in
$\mc{G}(\Q)$ such that
$\mc{S}$ and $Q_i$ are not correlated for each $i$, and $\{Q_i\}$ is a
dense subset of $\mc{G}$. For each $i$ let $\bx^{(i)}_1, \bx^{(i)}_2$
be two points in $\T^k$ for which \equ{eq: defn not correlated}
holds.
Since the linear action of subspaces on $\T^k$ is the
restriction of the continuous natural $\R^k$-action, for any
$\vre>0$ and any $T>0$ we can find a neighborhood of $Q_i$ in $\mc{G}$
consisting of subspaces $V$ such that for any ${\bf v} \in V$ with
$\|{\bf v}\|<T$, and any $\bx \in \T^k$, the distance in $\T^k$ between
${\bf v}.\bx$ and ${\bf v}'.\bx$ is less than $\vre$, where ${\bf v}'$ is the orthogonal
projection of ${\bf v}$ onto $Q_i$. We will fix below a sequence of bounded
sets $M_i \subset Q_i$ and denote by $M_i^{(V)}$ the preimage, under
orthogonal projection $V \to Q_i$, of
$M_i$. Using our assumption on $\mc{S}$, by perturbing $\bx^{(i)}_1, \bx^{(i)}_2$ slightly we can assume
that ${\bf q}.\bx_1^{(i)}$ and ${\bf q}.\bx^{(i)}_2$ are not in $\partial \mc{S}$
when ${\bf q} \in M_i$.
Since $\mc{S}$ is relatively open in
$W$, this implies that there is an open
subset $\mathcal{V}_i$ of $\mc{G}$
containing $Q_i$, such that for every $V \in \mathcal{V}_i$ and for
$\ell=1,2$,
\eq{eq: make sure}{
\#\left\{
{\bf q} \in M_i:  {\bf q}.\bx_{\ell}^{(i)} \in \mathcal{S}
\right\}
=\# \left\{
{\bf v} \in M_i^{(V)}:  {\bf v}.\bx_{\ell}^{(i)} \in \mathcal{S}
\right\}.
}
Then
$$\mc{G}_{\infty} \df \bigcap_{i_0} \bigcup_{i \geq i_0}
\mathcal{V}_i$$
is clearly
a dense $G_\delta$ subset of $\mc{G}$, and it remains to show that by
a judicious choice of the sequence $M_i$, we can ensure that for any
totally irrational $V
\in \mc{G}_{\infty}$, for any $\bx$, and any positive $\lambda, c$,
the separated net $Y_{\mc{S},\bx}$ does not
satisfy condition (3) of Theorem \ref{thm: Laczkovich2}.

For any $i$ let $C_i$ be a parallelotope which is a fundamental domain for the action of the lattice $Q_i
\cap \Z^k $ on
$Q_i$. Specifically we let
$$C_i \df \left\{\sum_{j=1}^d a_j \mathbf{q}_j:
\forall j, \, 0 \leq a_j <  \|\mathbf{q}_j\|  \right \},
$$ where
$\mathbf{q}_1, \ldots, \mathbf{q}_d$ are
a basis of $Q_i
\cap \Z^k$.
 We claim that there are  positive
constants $c_1, c_2, C$ (depending on $i$) and sets $M_i$
which are finite unions of translates of $C_i$, of arbitrarily large
diameter, such that:
\eq{eq: first inequality}{
|M_i| \geq c_1 \diam(M_i)^d;
}
\eq{eq: second inequality}{
\left| (\partial M_i)^{(1)} \right| \leq C\, \diam(M_i)^{d-1}
}
 (where, as before,  $(\partial M_i)^{(1)}$
is the set of points at distance 1 from $\partial M_i$).
Indeed, we simply take $M_i$ to be dilations by an integer factor, of
$C_i$ around its center. Then each $M_i$ is homothetic to $C_i$ and
\equ{eq: first inequality} and \equ{eq: second inequality} follow.
Now let
$N_i$ be the number of copies of $C_i$ in $M_i$. Then
$$\# \{q
\in M_i: q.\bx^{(i)}_\ell \in \mc{S} \}  = N_i \cdot \#
\{
q \in C_i: q.\bx^{(i)}_\ell \in \mc{S} \} = N_i \cdot \# \, \left(
  Q. \bx^{(i)}_{\ell} \cap \mc{S}\right)
$$
and
$$
|M_i| = N_i \cdot |C_i|,
$$
which implies via \equ{eq: first inequality} and \equ{eq: second inequality} that for some constant $c_2$,
$$
|(\partial M_i)^{(1)}| \leq c_2 N_i^{1-1/d}.
$$
If we set
$$c_3 \df \frac{
\left | \# \left(
 Q. \bx^{(i)}_2\cap \mc{S}\right) -  \# \left( Q. \bx^{(i)}_1 \cap \mc{S}\right)
 \right|
}{2}
,$$
then for any $\lambda$, there is $\ell \in \{1,2\}$ such that for $\bx' =
\bx_{\ell}^{(i)}$ we have
$$
\left| \#(Q. \bx' \cap \mc{S}) - \lambda |C_i| \right| \geq c_3,
$$
and hence
$$
\frac{\left |\# (M_i.\bx' \cap \mc{S}) -\lambda |M_i|\right |}{|(\partial
  M_i)^{(1)}|} \geq \frac{N_i\left|\#(Q \bx'\cap \mc{S}) -
    \lambda|C_i|\right|}{c_2N_i^{1-1/d}} \geq \frac{c_3}{c_2} N_i^{1/d}.
$$
So by choosing $N_i$ large enough we can ensure that for any
$\lambda$, and $\bx'$ one of the $\bx^{(i)}_{\ell}$, we have
\eq{eq: even surer}{
\left |\# (M_i.\bx' \cap \mc{S}) -\lambda |M_i|\right |\geq i \, |(\partial
  M_i)^{(1)}|.
}
Now fixing $\lambda$ and $c$ we choose $i > c$ and choose $\bx'$ as
above depending on $\lambda$.
If $V \in \mc{V}_i$ is totally irrational then for any $\bx \in \T^k$ there is a sequence
$v_n \in V$ such that $v_n.\bx\to \bx'$. So we may replace $\bx'$ with
$\bx$ and $M_i$ with $v_n+M_i$ for sufficiently large $n$, and
\equ{eq: even surer} will continue to hold.
In light of \equ{eq: make sure}, if $Y$ is the net corresponding to
$V, \, \mc{S}$ and $\bx$, and $E \df v_n+M_i$, then we have shown
$\Disc_Y(E, \lambda) > c |(\partial E)^{(1)}|$,
and we have a contradiction to
condition (3) of Theorem \ref{thm: Laczkovich2}.
\end{proof}

\begin{proof}[Proof of Theorem \ref{thm:  main, BDD}(3)]
Immediate from Propositions \ref{prop: bad rational dense} and \ref{prop: Baire}.
\end{proof}
\ignore{
\section{The Penrose net revisited}\name{section: penrose}

{\comment Hopefully reprove Ya'ar's result that the Penrose tiling is
  BDD to a lattice, using Theorem \ref{thm: main, BDD}(2) and results
  of \S \ref{subsection: tilings}. }
}

\bibliographystyle{abbrv}
\bibliography{SeparatedNets_bibliography}

\end{document}